\theoremstyle{plain}
\newtheorem{theor}{Theorem}
\newtheorem{nlemma}[theor]{Lemma}
\newtheorem{kor}[theor]{Corollary}
\newtheorem{nconjecture}[theor]{Conjecture}
\theoremstyle{definition}
\newtheorem{definition}[theor]{Definition}
\newtheorem{remark}[theor]{Remark}
\newcommand{\dcup}{\mathrel{\dot{\cup}}}
\newcommand{\nsqb}{\not\sqsubseteq}
\newcommand\blfootnote[1]{%
  \begingroup
  \renewcommand\thefootnote{}\footnote{#1}%
  \addtocounter{footnote}{-1}%
  \endgroup
}
\begin{document}

\title{
Definability in the substructure ordering of finite directed graphs 
}


\author{\'{A}d\'{a}m Kunos}


\maketitle

\begin{abstract}
We deal with first-order definability in the {\it substructure} ordering $(\mathcal{D}; \sqsubseteq)$ of finite directed graphs. 
In two papers, the author has already investigated the first-order language of the {\it embeddability} ordering $( \mathcal{D}; \leq)$. 
The latter has turned out to be quite strong, e.g., it has been shown that, modulo edge-reversing (on the whole graphs), it can express the full second-order language of directed graphs. 
Now we show that, with finitely many directed graphs added as constants, the first order language of  $( \mathcal{D}; \sqsubseteq)$ can express that of $( \mathcal{D}; \leq)$. 

The limits of the expressive power of such languages are intimately related to the automorphism groups of the orderings. 
Previously, analogue investigations have found the concerning automorphism groups to be quite trivial, e.g., the automorphism group of $( \mathcal{D}; \leq)$ is isomorphic to $\mathbb{Z}_2$. 
Here, unprecedentedly, this is not the case. 
Even though we conjecture that the automorphism group is isomorphic to $(\mathbb{Z}_2^4 \times S_4)\rtimes_{\alpha} \mathbb{Z}_2$, with a particular $\alpha$ in the semidirect product, we only prove it is finite.
\end{abstract}

\section{Introduction and formulation of our main theorems}

\blfootnote{This research was supported by the UNKP-17-3 New National Excellence Program of the Ministry of Human Capacities and by the Hungarian National Foundation for Scientific Research grant no. K115518.}

In 2009--2010 J. Je\v{z}ek and R. McKenzie published a series of papers 
\cite{Jezek2009_1, Jezek2010, Jezek2009_3, Jezek2009_4} 
in which they have examined (among other things) the first-order definability in the substructure orderings of finite mathematical structures with a given type, and determined the automorphism group of these orderings. 
They considered finite semilattices \cite{Jezek2009_1}, ordered sets \cite{Jezek2010}, distributive lattices \cite{Jezek2009_3} and lattices \cite{Jezek2009_4}. 
Similar investigations \cite{Kunos2015, KunosII, Wires2016, Ramanujam2016, Thinniyam2017, lmcs:4846} have emerged since. 
The current paper is one of such, connected strongly to the author's papers \cite{Kunos2015, KunosII} that dealt with the {\it embeddability} ordering of finite directed graphs. Now, instead of embeddability, we are examining the {\it substructure} ordering of finite directed graphs.

Let us consider a nonempty set $V$ and a binary relation $E\subseteq V^2$. We call the pair $G=(V,E)$ a {\it directed graph} or just {\it digraph}.
Let $\mathcal{D}$\label{xcvshgw8} denote the set of isomorphism types of finite digraphs. 
The elements of $V(=V(G))$\label{qwqepwrpdg} and $E(=E(G))$\label{qweoweru3} are called the{ \it vertices} and {\it edges} of $G$, respectively.
A digraph $G$ is said to be {\it embeddable} into $G'$, and we write $G\leq G'$, if there exists an injective homomorphism $\varphi : G \to G'$, i.e. an injective map for which $(v_1, v_2)\in E(G)$ implies $(\varphi(v_1), \varphi(v_2))\in E(G')$.
A digraph $G$ is a {\it substructure} of $G'$, and we write $G\sqsubseteq G'$, if it is isomorphic to an induced substructure (on some subset of the vertices) of $G'$ .
Note that in graph theory the term {\it subgraph} is used rather for the embeddability concept, thus we opt not to use it at all in this paper. 
For the most part, we will use the substructure concept, and will just use the term {\it substructure}, correspondingly.
Every substructure is embeddable but the converse is not true.
The names of these two concepts often mix both orally and on paper when it is clear from the context which notion we are using the whole time. 
In the present paper, however, we must be very cautious as both concepts are used alternately throughout the whole paper.
It is easy to see that both $\leq$ and $\sqsubseteq$ are partial orders on $\mathcal{D}$. 
Both partially ordered sets are naturally graded. The digraph $G$ is on the $n$th level of $(\mathcal{D}; \leq)$ or $(\mathcal{D}; \sqsubseteq)$ if $|V(G)|+|E(G)|=n$ or $|V(G)|=n$, respectively.
See Figures \ref{nagyabra} and \ref{41511} for the initial segments of the Hasse diagrams of the two partial orders.

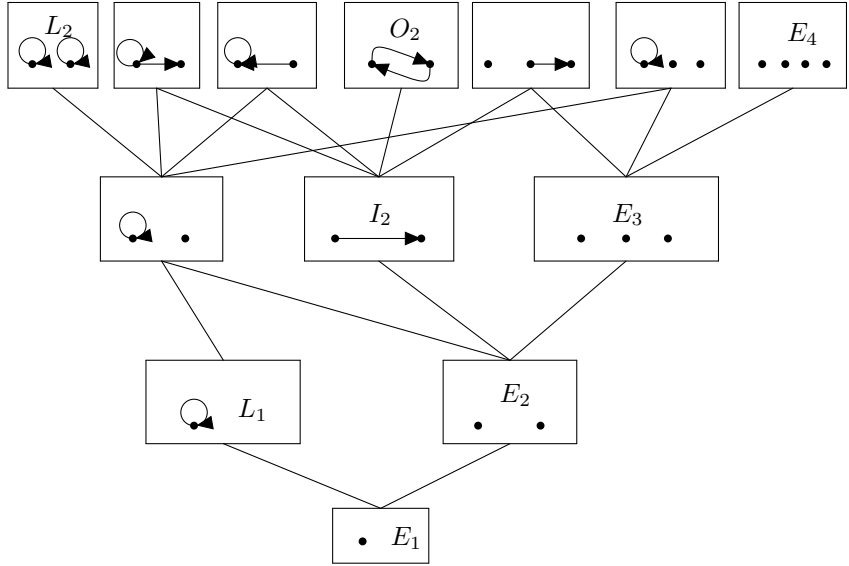
\begin{figure}[h]
\begin{center}
\begin{tikzpicture}[line cap=round,line join=round,>=triangle 45,x=0.83cm,y=1.0cm]
\clip(0.9,0.91) rectangle (15.1,9);
\draw (6.47,2.22)-- (8.01,2.22);
\draw (8.01,2.22)-- (8.01,1.49);
\draw (8.01,1.49)-- (6.47,1.49);
\draw (6.47,1.49)-- (6.47,2.22);
\draw [->] (6.51,5.81) -- (7.89,5.81);
\draw [->] (3.33,8.13) -- (4.04,8.13);
\draw [->] (5.84,8.13) -- (4.95,8.13);
\draw [->] (7.1,8.13) .. controls +(up:0.3cm) .. (8.03,8.13);
\draw [->] (8.03,8.13) .. controls +(down:0.3cm) .. (7.1,8.13);
\draw [->] (9.64,8.13) -- (10.29,8.13);
\draw (2.97,7.81)-- (4.34,7.81);
\draw (4.34,7.81)-- (4.34,8.95);
\draw (4.34,8.95)-- (2.97,8.95);
\draw (2.97,8.95)-- (2.97,7.81);
\draw (4.63,7.81)-- (6.21,7.81);
\draw (6.21,7.81)-- (6.21,8.95);
\draw (6.21,8.95)-- (4.63,8.95);
\draw (4.63,8.95)-- (4.63,7.81);
\draw (6.66,7.81)-- (8.48,7.81);
\draw (8.48,7.81)-- (8.48,8.95);
\draw (8.48,8.95)-- (6.66,8.95);
\draw (6.66,8.95)-- (6.66,7.81);
\draw (8.71,7.81)-- (10.58,7.81);
\draw (10.58,7.81)-- (10.58,8.95);
\draw (10.58,8.95)-- (8.71,8.95);
\draw (8.71,8.95)-- (8.71,7.81);
\draw (11.01,7.81)-- (12.76,7.81);
\draw (12.76,7.81)-- (12.76,8.95);
\draw (12.76,8.95)-- (11.01,8.95);
\draw (11.01,8.95)-- (11.01,7.81);
\draw (12.98,7.81)-- (14.7,7.81);
\draw (14.7,7.81)-- (14.7,8.95);
\draw (14.7,8.95)-- (12.98,8.95);
\draw (12.98,8.95)-- (12.98,7.81);
\draw (2.75,5.51)-- (4.71,5.51);
\draw (4.71,5.51)-- (4.71,6.63);
\draw (4.71,6.63)-- (2.75,6.63);
\draw (2.75,6.63)-- (2.75,5.51);
\draw (6.02,5.51)-- (8.41,5.51);
\draw (8.41,5.51)-- (8.41,6.63);
\draw (8.41,6.63)-- (6.02,6.63);
\draw (6.02,6.63)-- (6.02,5.51);
\draw (9.7,5.51)-- (12.65,5.51);
\draw (12.65,5.51)-- (12.65,6.63);
\draw (12.65,6.63)-- (9.7,6.63);
\draw (9.7,6.63)-- (9.7,5.51);
\draw (3.48,3.08)-- (5.95,3.08);
\draw (5.95,3.08)-- (5.95,4.19);
\draw (5.95,4.19)-- (3.48,4.19);
\draw (3.48,4.19)-- (3.48,3.08);
\draw (8.24,3.08)-- (10.38,3.08);
\draw (10.38,3.08)-- (10.38,4.19);
\draw (10.38,4.19)-- (8.24,4.19);
\draw (8.24,4.19)-- (8.24,3.08);
\draw (7.24,2.22)-- (4.72,3.08);
\draw (7.24,2.22)-- (9.31,3.08);
\draw (9.31,4.19)-- (11.17,5.51);
\draw (4.72,4.19)-- (3.73,5.51);
\draw (9.31,4.19)-- (7.21,5.51);
\draw (9.31,4.19)-- (3.73,5.51);
\draw (3.73,6.63)-- (3.65,7.81);
\draw (3.73,6.63)-- (5.42,7.81);
\draw (7.21,6.63)-- (3.65,7.81);
\draw (7.21,6.63)-- (5.42,7.81);
\draw (7.21,6.63)-- (7.57,7.81);
\draw (7.21,6.63)-- (9.65,7.81);
\draw (11.17,6.63)-- (13.84,7.81);
\draw (11.17,6.63)-- (11.89,7.81);
\draw (11.17,6.63)-- (9.65,7.81);
\draw (7.25,2.1) node[anchor=north west] {$E_1$};
\draw (7.23,8.86) node[anchor=north west] {$O_2$};
\draw (13.6,8.8) node[anchor=north west] {$E_4$};
\draw (6.9,6.4) node[anchor=north west] {$I_2$};
\draw (10.8,6.4) node[anchor=north west] {$E_3$};
\draw (1.27,7.81)-- (2.71,7.81);
\draw (2.71,7.81)-- (2.71,8.95);
\draw (2.71,8.95)-- (1.27,8.95);
\draw (1.27,8.95)-- (1.27,7.81);
\draw (1.7,8.9) node[anchor=north west] {$L_2$};
\draw (3.73,6.63)-- (1.99,7.81);
\draw (3.73,6.63)-- (11.89,7.81);
\draw (9,4) node[anchor=north west] {$E_2$};
\draw (4.8,3.8) node[anchor=north west] {$L_1$};
\begin{scriptsize}
\fill [color=black] (-0.72,3.32) circle (1.5pt);
\draw[color=black] (-0.03,5.91) node {$F$};
\fill [color=black] (-0.98,5.81) circle (1.5pt);
\draw[color=black] (0.06,8.45) node {$G$};
\fill [color=black] (-1.24,8.13) circle (1.5pt);
\draw[color=black] (-0.01,10.81) node {$H$};
\fill [color=black] (4.25,11.07) circle (1.5pt);
\draw[color=black] (4.4,11.33) node {$E$};
\fill [color=black] (8.8,10.99) circle (1.5pt);
\draw[color=black] (8.9,11.25) node {$I$};
\fill [color=black] (9.8,11.05) circle (1.5pt);
\draw[color=black] (9.94,11.31) node {$J$};
\fill [color=black] (3.27,12.38) circle (1.5pt);
\draw[color=black] (3.42,12.64) node {$K$};
\fill [color=black] (4.11,12.32) circle (1.5pt);
\draw[color=black] (4.24,12.57) node {$L$};
\fill [color=black] (6.51,12.48) circle (1.5pt);
\draw[color=black] (6.67,12.74) node {$M$};
\fill [color=black] (7.89,12.43) circle (1.5pt);
\draw[color=black] (8.04,12.7) node {$N$};
\fill [color=black] (10.45,12.24) circle (1.5pt);
\draw[color=black] (10.6,12.5) node {$O$};
\fill [color=black] (11.17,12.22) circle (1.5pt);
\draw[color=black] (11.31,12.48) node {$P$};
\fill [color=black] (11.84,12.2) circle (1.5pt);
\draw[color=black] (12,12.46) node {$Q$};
\fill [color=black] (3.33,13.49) circle (1.5pt);
\draw[color=black] (3.48,13.74) node {$R$};
\fill [color=black] (4.04,13.51) circle (1.5pt);
\draw[color=black] (4.2,13.78) node {$S$};
\fill [color=black] (4.95,13.76) circle (1.5pt);
\draw[color=black] (5.07,14.02) node {$T$};
\fill [color=black] (5.84,13.68) circle (1.5pt);
\draw[color=black] (6,13.94) node {$U$};
\fill [color=black] (7.1,13.85) circle (1.5pt);
\draw[color=black] (7.23,14.11) node {$V$};
\fill [color=black] (8.03,13.95) circle (1.5pt);
\draw[color=black] (8.21,14.22) node {$W$};
\fill [color=black] (8.96,13.77) circle (1.5pt);
\draw[color=black] (9.1,14.04) node {$Z$};
\fill [color=black] (9.64,13.67) circle (1.5pt);
\draw[color=black] (9.9,13.94) node {$A_1$};
\fill [color=black] (10.29,13.67) circle (1.5pt);
\draw[color=black] (10.57,13.94) node {$B_1$};
\fill [color=black] (11.46,13.78) circle (1.5pt);
\draw[color=black] (11.74,14.04) node {$C_1$};
\fill [color=black] (11.92,13.8) circle (1.5pt);
\draw[color=black] (12.2,14.05) node {$D_1$};
\fill [color=black] (12.36,13.78) circle (1.5pt);
\draw[color=black] (12.65,14.04) node {$E_1$};
\fill [color=black] (13.35,13.95) circle (1.5pt);
\draw[color=black] (13.61,14.2) node {$F_1$};
\fill [color=black] (13.72,13.95) circle (1.5pt);
\draw[color=black] (14,14.2) node {$G_1$};
\fill [color=black] (14.04,13.97) circle (1.5pt);
\draw[color=black] (14.3,14.2) node {$L_1$};
\fill [color=black] (14.38,14) circle (1.5pt);
\draw[color=black] (14.65,14.2) node {$M_1$};
\fill [color=black] (4.25,3.32) circle (1.5pt);
\draw[color=black] (4.53,3.59) ;
\fill [color=black] (8.8,3.32) circle (1.5pt);
\fill [color=black] (9.8,3.32) circle (1.5pt);
\fill [color=black] (3.27,5.81) circle (1.5pt);
\draw[color=black] (3.55,6.07) ;
\fill [color=black] (6.51,5.81) circle (1.5pt);
\fill [color=black] (7.89,5.81) circle (1.5pt);
\fill [color=black] (10.45,5.81) circle (1.5pt);
\fill [color=black] (11.17,5.81) circle (1.5pt);
\fill [color=black] (11.84,5.81) circle (1.5pt);
\fill [color=black] (4.11,5.81) circle (1.5pt);
\fill [color=black] (3.33,8.13) circle (1.5pt);
\draw[color=black] (3.59,8.39) ;
\fill [color=black] (4.04,8.13) circle (1.5pt);
\fill [color=black] (4.95,8.13) circle (1.5pt);
\draw[color=black] (5.22,8.39) ;
\fill [color=black] (5.84,8.13) circle (1.5pt);
\fill [color=black] (7.1,8.13) circle (1.5pt);
\fill [color=black] (8.03,8.13) circle (1.5pt);
\fill [color=black] (8.96,8.13) circle (1.5pt);
\fill [color=black] (9.64,8.13) circle (1.5pt);
\fill [color=black] (10.29,8.13) circle (1.5pt);
\fill [color=black] (11.46,8.13) circle (1.5pt);
\draw[color=black] (11.68,8.39) ;
\fill [color=black] (11.92,8.13) circle (1.5pt);
\fill [color=black] (12.36,8.13) circle (1.5pt);
\fill [color=black] (13.35,8.13) circle (1.5pt);
\fill [color=black] (13.72,8.13) circle (1.5pt);
\fill [color=black] (14.04,8.13) circle (1.5pt);
\fill [color=black] (14.38,8.13) circle (1.5pt);
\fill [color=black] (6.95,1.78) circle (1.5pt);
\fill [color=black] (-2.48,8.95) circle (1.5pt);
\draw[color=black] (0.1,7.78) node {$T_2$};
\fill [color=black] (-2.32,7.81) circle (1.5pt);
\draw[color=black] (0.21,7.09) node {$U_2$};
\fill [color=black] (-2.7,6.63) circle (1.5pt);
\draw[color=black] (0.1,6.37) node {$V_2$};
\fill [color=black] (-2.66,5.51) circle (1.5pt);
\draw[color=black] (0.17,5.7) node {$W_2$};
\fill [color=black] (-2.38,4.19) circle (1.5pt);
\draw[color=black] (0.15,4.92) node {$Z_2$};
\fill [color=black] (-2.34,3.08) circle (1.5pt);
\draw[color=black] (0.17,4.25) node {$A_3$};
\fill [color=black] (2.97,-0.76) circle (1.5pt);
\draw[color=black] (3.25,-0.5) node {$B_3$};
\fill [color=black] (4.34,-0.72) circle (1.5pt);
\draw[color=black] (4.61,-0.46) node {$C_3$};
\fill [color=black] (4.63,-0.69) circle (1.5pt);
\draw[color=black] (4.91,-0.42) node {$D_3$};
\fill [color=black] (6.21,-0.82) circle (1.5pt);
\draw[color=black] (6.43,-0.5) node {$E_3$};
\fill [color=black] (6.66,-0.79) circle (1.5pt);
\draw[color=black] (6.93,-0.54) node {$F_3$};
\fill [color=black] (8.48,-0.82) circle (1.5pt);
\draw[color=black] (8.77,-0.55) node {$G_3$};
\fill [color=black] (8.71,-0.94) circle (1.5pt);
\draw[color=black] (8.99,-0.68) node {$H_3$};
\fill [color=black] (10.58,-0.86) circle (1.5pt);
\draw[color=black] (10.81,-0.59) node {$I_3$};
\fill [color=black] (11.01,-0.85) circle (1.5pt);
\draw[color=black] (11.27,-0.59) node {$J_3$};
\fill [color=black] (12.76,-0.81) circle (1.5pt);
\draw[color=black] (13.04,-0.55) node {$K_3$};
\fill [color=black] (12.98,-0.77) circle (1.5pt);
\draw[color=black] (13.24,-0.5) node {$L_3$};
\fill [color=black] (14.7,-0.73) circle (1.5pt);
\draw[color=black] (14.97,-0.46) node {$M_3$};
\fill [color=black] (2.75,-2.01) circle (1.5pt);
\draw[color=black] (3.03,-1.76) node {$N_4$};
\fill [color=black] (4.71,-2.05) circle (1.5pt);
\draw[color=black] (5,-1.8) node {$O_4$};
\fill [color=black] (6.02,-2.07) circle (1.5pt);
\draw[color=black] (6.3,-1.82) node {$P_4$};
\fill [color=black] (8.41,-2.07) circle (1.5pt);
\draw[color=black] (8.69,-1.82) node {$Q_4$};
\fill [color=black] (9.7,-1.98) circle (1.5pt);
\draw[color=black] (9.97,-1.72) node {$R_4$};
\fill [color=black] (12.65,-2) circle (1.5pt);
\draw[color=black] (12.92,-1.74) node {$S_4$};
\fill [color=black] (3.48,-3.15) circle (1.5pt);
\draw[color=black] (3.83,-1.84) node {$H_5$};
\fill [color=black] (5.95,-3.32) circle (1.5pt);
\draw[color=black] (6.28,-1.82) node {$I_5$};
\fill [color=black] (8.24,-3.38) circle (1.5pt);
\draw[color=black] (8.62,-1.8) node {$J_5$};
\fill [color=black] (10.38,-3.42) circle (1.5pt);
\draw[color=black] (10.77,-1.84) node {$K_5$};
\fill [color=black] (2.27,13.81) circle (1.5pt);
\draw[color=black] (2.53,14.07) node {$J_2$};
\fill [color=black] (1.66,13.83) circle (1.5pt);
\draw[color=black] (1.93,14.09) node {$S_2$};
\fill [color=black] (1.66,8.13) circle (1.5pt);
\draw[color=black] (1.92,8.39) ;
\fill [color=black] (2.27,8.13) circle (1.5pt);
\draw[color=black] (2.55,8.39) ;
\fill [color=black] (2.71,-0.61) circle (1.5pt);
\draw[color=black] (2.99,-0.35) node {$N_6$};
\fill [color=black] (1.27,-0.78) circle (1.5pt);
\draw[color=black] (1.55,-0.52) node {$O_6$};
\draw[rotate around={-90:(4.25,3.32)}] [->] (4.25,3.32) arc (360:10:5pt);
\draw[rotate around={-90:(3.27,5.81)}] [->] (3.27,5.81) arc (360:10:5pt);
\draw[rotate around={-90:(1.66,8.13)}] [->] (1.66,8.13) arc (360:10:5pt);
\draw[rotate around={-90:(2.27,8.13)}] [->] (2.27,8.13) arc (360:10:5pt);
\draw[rotate around={-60:(3.33,8.13)}] [->] (3.33,8.13) arc (360:10:5pt);
\draw[rotate around={-90:(11.46,8.13)}] [->] (11.46,8.13) arc (360:10:5pt);
\draw[rotate around={-90:(4.95,8.13)}] [->] (4.95,8.13) arc (360:10:5pt);
\end{scriptsize}
\end{tikzpicture}
\end{center}
\caption{The initial segment of the Hasse diagram of $(\mathcal{D}; \leq)$.}
\label{nagyabra}
\end{figure}

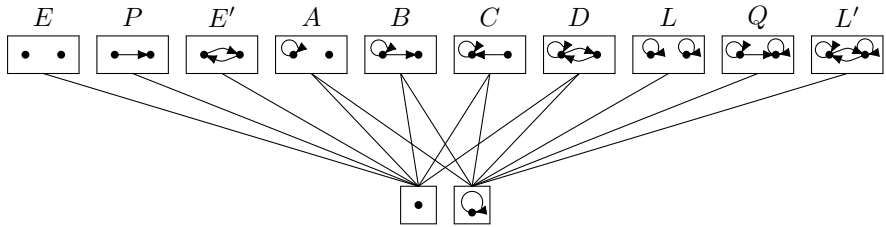
\begin{figure}[h]
\begin{center}
\begin{tikzpicture}[line cap=round,line join=round,>=Latex, x=.475cm,y=.5cm]
\clip(-0.01,.95) rectangle (30,6.8);

\draw (0,5)-- (2,5);
\draw (2,5)-- (2,6);
\draw (2,6)-- (0,6);
\draw (0,6)-- (0,5);
\fill [color=black] (0.5,5.5) circle (1.5pt);
\fill [color=black] (1.5,5.5) circle (1.5pt);
\draw (1,5)-- (11.5, 2);
\draw[color=black] (1,6.5) node {$E$};

\draw (2.5,5)-- (4.5,5);
\draw (4.5,5)-- (4.5,6);
\draw (4.5,6)-- (2.5,6);
\draw (2.5,6)-- (2.5,5);
\fill [color=black] (3,5.5) circle (1.5pt);
\fill [color=black] (4,5.5) circle (1.5pt);
\draw [->] (3,5.5) -- (4,5.5);
\draw (3.5,5)-- (11.5, 2);
\draw[color=black] (3.5,6.5) node {$P$};

\draw (5,5)-- (7,5);
\draw (7,5)-- (7,6);
\draw (7,6)-- (5,6);
\draw (5,6)-- (5,5);
\fill [color=black] (5.5,5.5) circle (1.5pt);
\fill [color=black] (6.5,5.5) circle (1.5pt);
\draw [->] (5.5,5.5) ..controls(6, 5.7)..  (6.5,5.5);
\draw [->] (6.5,5.5) ..controls(6, 5.3)..  (5.5,5.5);
\draw (6,5)-- (11.5, 2);
\draw[color=black] (6,6.5) node {$E'$};

\draw (7.5,5)-- (9.5,5);
\draw (9.5,5)-- (9.5,6);
\draw (9.5,6)-- (7.5,6);
\draw (7.5,6)-- (7.5,5);
\fill [color=black] (8,5.5) circle (1.5pt);
\draw[rotate around={-60:(8,5.5)}] [->] (8,5.5) arc (360:10:3pt);
\fill [color=black] (9,5.5) circle (1.5pt);
\draw (8.5,5)-- (11.5, 2);
\draw (8.5,5)-- (13, 2);
\draw[color=black] (8.5,6.5) node {$A$};

\draw (10,5)-- (12,5);
\draw (12,5)-- (12,6);
\draw (12,6)-- (10,6);
\draw (10,6)-- (10,5);
\fill [color=black] (10.5,5.5) circle (1.5pt);
\draw[rotate around={-60:(10.5,5.5)}] [->] (10.5,5.5) arc (360:10:3pt);
\fill [color=black] (11.5,5.5) circle (1.5pt);
\draw [->] (10.5,5.5) -- (11.5,5.5);
\draw (11,5)-- (11.5, 2);
\draw (11,5)-- (13, 2);
\draw[color=black] (11,6.5) node {$B$};

\draw (12.5,5)-- (14.5,5);
\draw (14.5,5)-- (14.5,6);
\draw (14.5,6)-- (12.5,6);
\draw (12.5,6)-- (12.5,5);
\fill [color=black] (13,5.5) circle (1.5pt);
\draw[rotate around={-45:(13,5.5)}] [->] (13,5.5) arc (360:10:3pt);
\fill [color=black] (14,5.5) circle (1.5pt);
\draw [->] (14,5.5) -- (13,5.5);
\draw (13.5,5)-- (11.5, 2);
\draw (13.5,5)-- (13, 2);
\draw[color=black] (13.5,6.5) node {$C$};

\draw (15,5)-- (17,5);
\draw (17,5)-- (17,6);
\draw (17,6)-- (15,6);
\draw (15,6)-- (15,5);
\fill [color=black] (15.5,5.5) circle (1.5pt);
\draw[rotate around={-40:(15.5,5.5)}] [->] (15.5,5.5) arc (360:10:3pt);
\fill [color=black] (16.5,5.5) circle (1.5pt);
\draw [->] (15.5,5.5) ..controls(16, 5.7)..  (16.5,5.5);
\draw [->] (16.5,5.5) ..controls(16, 5.3)..  (15.5,5.5);
\draw (16,5)-- (11.5, 2);
\draw (16,5)-- (13, 2);
\draw[color=black] (16,6.5) node {$D$};

\draw (17.5,5)-- (19.5,5);
\draw (19.5,5)-- (19.5,6);
\draw (19.5,6)-- (17.5,6);
\draw (17.5,6)-- (17.5,5);
\fill [color=black] (18,5.5) circle (1.5pt);
\draw[rotate around={-90:(18,5.5)}] [->] (18,5.5) arc (360:10:3pt);
\fill [color=black] (19,5.5) circle (1.5pt);
\draw[rotate around={-90:(19,5.5)}] [->] (19,5.5) arc (360:10:3pt);
\draw (18.5,5)-- (13, 2);
\draw[color=black] (18.5,6.5) node {$L$};

\draw (20,5)-- (22,5);
\draw (22,5)-- (22,6);
\draw (22,6)-- (20,6);
\draw (20,6)-- (20,5);
\fill [color=black] (20.5,5.5) circle (1.5pt);
\draw[rotate around={-40:(20.5,5.5)}] [->] (20.5,5.5) arc (360:10:3pt);
\fill [color=black] (21.5,5.5) circle (1.5pt);
\draw[rotate around={-90:(21.5,5.5)}] [->] (21.5,5.5) arc (360:10:3pt);
\draw [->] (20.5,5.5) -- (21.5,5.5);
\draw (21,5)-- (13, 2);
\draw[color=black] (21,6.5) node {$Q$};

\draw (22.5,5)-- (24.5,5);
\draw (24.5,5)-- (24.5,6);
\draw (24.5,6)-- (22.5,6);
\draw (22.5,6)-- (22.5,5);
\fill [color=black] (23,5.5) circle (1.5pt);
\draw[rotate around={-40:(23,5.5)}] [->] (23,5.5) arc (360:10:3pt);
\fill [color=black] (24,5.5) circle (1.5pt);
\draw[rotate around={-90:(24,5.5)}] [->] (24,5.5) arc (360:10:3pt);
\draw [->] (23,5.5) ..controls(23.5, 5.7)..  (24,5.5);
\draw [->] (24,5.5) ..controls(23.5, 5.3)..  (23,5.5);
\draw (23.5,5)-- (13, 2);
\draw[color=black] (23.5,6.5) node {$L'$};

\draw (12,1)-- (12,2);
\draw (12,2)-- (11,2);
\draw (11,2)-- (11,1);
\draw (11,1)-- (12,1);
\fill [color=black] (11.5,1.5) circle (1.5pt);

\draw (13.5,1)-- (13.5,2);
\draw (13.5,2)-- (12.5,2);
\draw (12.5,2)-- (12.5,1);
\draw (12.5,1)-- (13.5,1);
\fill [color=black] (13,1.3) circle (1.5pt);
\draw[rotate around={-90:(13,1.3)}] [->] (13,1.3) arc (360:10:4pt);

\end{tikzpicture}
\end{center}
\caption{The initial segment of the Hasse diagram of $(\mathcal{D}; \sqsubseteq)$. (Please disregard the labels unless they are being specifically referred to.)}
\label{41511}
\end{figure}

Let $(\mathcal{A};\leq)$ be an arbitrary poset.
An $n$-ary relation $R$ is said to be (first-order) definable in $(\mathcal{A};\leq)$ if there exists a first-order formula
$\Psi(x_1,x_2,\dots, x_n)$ with free variables $x_1,x_2,\dots, x_n$ in the language of partially ordered sets such that for any  $a_1,a_2,\dots, a_n\in \mathcal{A}$,
$\Psi(a_1,a_2,\dots, a_n)$  holds in $(\mathcal{A};\leq)$ if and only if $(a_1,a_2,\dots, a_n)\in R$.
A subset of $\mathcal{A}$ is definable if it is definable as a unary relation. An element $a\in \mathcal{A}$ is said to be definable if
the set $\{a\}$ is definable.

Our main result is the following.

\begin{theor}\label{46031} There exists a finite set of finite directed graphs $\{C_1, \dots, C_k\}$ such that the binary embeddability relation,
$$\{(G,G'): G\leq G'\},$$
is definable in the first-order language of $(\mathcal{D}; \sqsubseteq, C_1, \dots, C_k)$. Consequently, every relation definable in the first-order language of $(\mathcal{D}; \leq)$ is definable in that of $(\mathcal{D}; \sqsubseteq, C_1, \dots, C_k)$.
\end{theor}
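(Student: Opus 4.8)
The plan is to define embeddability from the substructure ordering by isolating the single feature distinguishing them: an induced substructure records both the edges and the non-edges on its vertex set, whereas an embedding may ``fill in'' further edges. Indeed, an injective homomorphism $\varphi\colon G\to G'$ has image the induced substructure $G'[\varphi(V(G))]$ on $|V(G)|$ vertices, into which $G$ maps bijectively and edge-monotonically, so
$$G\leq G' \iff \exists H\ \bigl(\mathrm{Span}(G,H)\ \wedge\ H\sqsubseteq G'\bigr),$$
where $\mathrm{Span}(X,Y)$ abbreviates ``$X$ and $Y$ have the same number of vertices and $Y$ arises from $X$ by adding edges'' (there is a vertex bijection with $E(X)\subseteq E(Y)$). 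Thus the whole problem reduces to defining $\mathrm{Span}$ in the first-order language of $(\mathcal{D};\sqsubseteq,C_1,\dots,C_k)$; the stated corollary is then immediate, as any formula over $(\mathcal{D};\leq)$ is obtained by replacing each atomic $s\leq t$ with the defining formula of $\leq$.

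First I would organise the grading. The order $\sqsubseteq$ is graded by vertex number, and its covering relation is order-theoretically definable: a cover adds exactly one vertex, since equal vertex number forces equality and leaves no room for an intermediate element. I would place the two one-vertex graphs and the finitely many small graphs among the constants $C_1,\dots,C_k$; with them, predicates such as ``$x$ has a loop'', ``$x$ has an arc'' and ``$x$ is edgeless'' become finite disjunctions of ``$x$ contains a prescribed constant as a substructure'' (some care is needed, e.g. the single arc is \emph{not} a substructure of the digon, so ``$x$ has an arc'' must list the several two-vertex constants carrying an arc). The edgeless graphs then form a definable chain of canonical level-representatives, and the technical key of this stage is to upgrade this to a definition of ``$|V(x)|=|V(y)|$''.

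The central obstacle is that $\mathrm{Span}$ is an \emph{intra-level} relation, while $\sqsubseteq$ is blind within a level: two distinct graphs with the same vertex number are $\sqsubseteq$-incomparable. Hence every distinction between edge-configurations on a fixed vertex set must be recovered by passing to larger graphs. To define $\mathrm{Span}(X,Y)$ I would quantify over an auxiliary graph $Z$, assembled from $X$ and $Y$ by definable operations (a marked combination of the two on a common vertex set, carrying a distinguished ``matching''), and assert through substructure conditions that the matching realises a vertex bijection under which every edge of the $X$-part is present in the $Y$-part. This is exactly where the constants earn their keep. The substructure ordering has a large automorphism group --- generated by operations such as complementation together with certain uniform relabelings of the adjacency and loop types --- and any such automorphism that fails to preserve $\leq$ (complementation, for one, sends the edgeless graph to the complete graph) would obstruct definability outright; these must be destroyed. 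I would therefore choose $C_1,\dots,C_k$ so that their definable substructure properties separate all adjacency types of an ordered pair and both loop types, rendering each small marker graph rigid, after which the encoding of $Z$ can be carried out.

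The step I expect to be genuinely hard is precisely this edge-containment encoding: phrasing ``$E(X)\subseteq E(Y)$ under a common vertex labelling'' as a first-order substructure condition on a quantified auxiliary graph, uniformly in $X$ and $Y$ and invariantly under the residual symmetries that the constants do not already kill. Defining the level predicate and performing the automorphism analysis that justifies the choice of constants are the other two substantial ingredients, but I expect both to become routine once the rigid marker constants are in hand.
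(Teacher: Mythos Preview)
Your reduction $G\leq G' \iff \exists H\,(\mathrm{Span}(G,H)\wedge H\sqsubseteq G')$ is correct, but the paper does not go through $\mathrm{Span}$. Instead of building an auxiliary object from \emph{both} $X$ and $Y$, it works from $G'$ alone via an \emph{edge-supporting} construction: for every edge $(u,v)$ of $G'$ a fresh vertex $w$ with arcs $(u,w),(w,v)$ is added, and every vertex of $G'$ is labelled by attaching its own directed cycle $O_{l_i}$ of a distinct, sufficiently large length. Deleting $w$---which is exactly what $\sqsubseteq$ permits---``unsupports'' the edge $(u,v)$; taking an arbitrary substructure of this gadget and then reading off only the still-supported edges recovers precisely the digraphs embeddable into $G'$. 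The read-off is first-order because the pairwise distinct cycle sizes give definable names to individual vertices (through the $\male_i$-shaped substructures), so statements like ``the edge from the vertex marked $O_i$ to the vertex marked $O_j$ is still supported'' become substructure conditions.

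Your ``matching inside an auxiliary $Z$'' is close to the paper's $G\to l(G)$ device, but the paper uses that only for bookkeeping (counting vertices, disjoint unions), not to carry the main argument. The conceptual point your plan underplays is that finitely many constants cannot by themselves name the $n$ vertices of an arbitrary $X$; you need an \emph{unbounded definable family} of pairwise non-isomorphic markers, and the constants' real job is merely to pin down that family against the nontrivial automorphisms of $(\mathcal D;\sqsubseteq)$. The paper's family is $\{O_n:n\ge 3\}$, extracted from the definable class of $IO$-graphs. Once such a family is available, either route---your $\mathrm{Span}$ or the paper's edge-support---can be pushed through, but without it the step you flag as ``genuinely hard'' has no mechanism: a bare matching does not let you quantify over \emph{specific} matched pairs to check $E(X)\subseteq E(Y)$, and distinguishing matching-edges from graph-edges uniformly over mixed loop/loop-free $X,Y$ already forces something like the cycle labels.
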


In itself, this theorem is quite weightless, what fills it with content is that we already know \cite{Kunos2015, KunosII} that the first-order language of $(\mathcal{D}; \leq)$ is surprisingly strong. The paper \cite{Kunos2015} has two parts. The first deals with definability in $(\mathcal{D}; \leq)$, the second determines the automorphism group of $(\mathcal{D}; \leq)$ (building on the first part, of course). 
The paper \cite{KunosII} extends the main result of the first part of \cite{Kunos2015}, hence if one is only interested in definability, it is enough to read \cite{KunosII}. 
The main result there \cite[Theorem 5]{KunosII} is some kind of a characterization of the first-order definable relations in $(\mathcal{D}; \leq)$. 
To even state the result precisely, there is a 3-page-long preparation which we don't repeat here. 
We only provide some corollaries, demonstrating the power of definability in $(\mathcal{D}; \leq)$. 
With Theorem \ref{46031}, these corollaries transform immediately into statements for the first-order language of $(\mathcal{D}; \sqsubseteq, C_1, \dots, C_k)$. 
As this paper is about the substructure ordering, we formulate these versions, rather than the versions talking about $(\mathcal{D}; \leq)$.

\begin{kor}\label{58562} There exists a finite set of finite directed graphs $\{C_1, \dots, C_k\}$ such that  in the first-order language of $(\mathcal{D}; \sqsubseteq, C_1, \dots, C_k)$
\begin{itemize}
\item every single digraph $G$ is definable,
\item the set of weakly connected digraphs is definable, moreover,
\item the full second-order language of digraphs becomes available.
\end{itemize}
\end{kor}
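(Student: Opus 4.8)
The engine of the argument is the second assertion of Theorem~\ref{46031}: once $\leq$ admits a first-order definition $\varphi_{\leq}(x,y)$ over $(\mathcal{D};\sqsubseteq,C_1,\dots,C_k)$, every relation definable in $(\mathcal{D};\leq)$ is definable over $(\mathcal{D};\sqsubseteq,C_1,\dots,C_k)$ as well, since one may rewrite a defining $\leq$-formula by replacing each atomic subformula $u\leq v$ with $\varphi_{\leq}(u,v)$. My plan is to treat the three assertions separately, recall the matching fact for $(\mathcal{D};\leq)$ from \cite{Kunos2015,KunosII}, transport it along this translation, and supply a short symmetry-breaking argument wherever the translation by itself is insufficient.

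The second bullet transports directly. Reversing every edge of a digraph leaves weak connectedness unchanged, so by the characterization \cite[Theorem~5]{KunosII} the set of weakly connected digraphs is definable in $(\mathcal{D};\leq)$, and the translation gives its definability over $(\mathcal{D};\sqsubseteq,C_1,\dots,C_k)$. The translation likewise makes the second-order language of digraphs available, but, exactly as in \cite{KunosII}, only up to the edge-reversing automorphism $\rho$; disposing of this residual ambiguity is the real work and is shared with the first bullet.

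For the first bullet I would pass to the automorphism group. Any automorphism of $(\mathcal{D};\sqsubseteq,C_1,\dots,C_k)$ fixes each constant and preserves $\sqsubseteq$, hence preserves every relation definable from these, in particular $\leq$; it thus restricts to an automorphism of $(\mathcal{D};\leq)$, a group isomorphic to $\mathbb{Z}_2=\{\mathrm{id},\rho\}$. Consequently the automorphism group of the expanded structure is contained in $\{\mathrm{id},\rho\}$. Enlarging, if need be, the constant list furnished by Theorem~\ref{46031} — which only strengthens the language — so that it contains a digraph not isomorphic to its converse, I ensure that $\rho$ fixes no such constant and the automorphism group collapses to the identity. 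The second-order apparatus transported from \cite{KunosII} determines each digraph $G$ up to an automorphism of the structure; once that group is trivial this is an outright definition of $G$, which is the first bullet, and the same collapse removes the edge-reversing proviso, delivering the third bullet in full.

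The main obstacle is therefore not the transfer, which is formal, but the careful use of \cite[Theorem~5]{KunosII} in the last step: one must verify that the invariance built into that characterization identifies each digraph precisely up to $\rho$ and no coarser or finer equivalence, so that neutralizing $\rho$ with a single non-self-converse constant is exactly what converts definability-up-to-$\rho$ into definability. Matching the invariance condition of \cite[Theorem~5]{KunosII} to the collapsed automorphism group is where I would concentrate the proof.
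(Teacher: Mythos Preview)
Your proposal is correct and follows the same line as the paper: the corollary is deduced from Theorem~\ref{46031} together with the results of \cite{KunosII}, with one extra constant possibly adjoined to the list. The paper does not give a separate proof of the corollary at all; it simply notes, in the paragraph following the statement, that the listed properties hold for $(\mathcal{D};\leq)$ once a suitable constant is added (this is what \cite{KunosII} proves), and then transports them via Theorem~\ref{46031}.

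Where you go a bit further than the paper is in \emph{explaining} why the extra constant is needed, via the automorphism-group argument (any automorphism of $(\mathcal{D};\sqsubseteq,C_1,\dots,C_k)$ preserves the definable relation $\leq$, hence lies in $\{\mathrm{id},\rho\}$; a non-self-converse constant kills $\rho$). That explanation is sound, and your closing worry is exactly the right one: the step ``trivial automorphism group $\Rightarrow$ every element definable'' is not a general principle but must come from the specific content of \cite[Theorem~5]{KunosII}. Concretely, that theorem gives, for each $G$, a $\leq$-formula defining the $\rho$-orbit $\{(G,C_0),(\rho(G),\rho(C_0))\}$ as a binary relation; substituting the constant $C_0$ into the second coordinate then isolates $G$. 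The paper sidesteps this by citing \cite{KunosII} as a black box; your route unpacks it, but lands in the same place.
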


Again, for the full scope of Theorem \ref{46031}, see \cite[Section 2]{KunosII}.

We remark that the notations of Theorem \ref{46031} and Corollary \ref{58562} may suggest that the set $\{C_1, \dots, C_k\}$ in the two statements can be the same.
This is not necessarily true, even though there is a strong connection between the two sets.
Depending on the set of Theorem \ref{46031}, an additional digraph might have to be added the get the corresponding set of Corollary \ref{58562}.
This is due to the fact that the first-order language of $(\mathcal{D}; \leq)$ does not yield the listed statements of Corollary \ref{58562} in itself. 
A constant (a particular digraph) needs to be added to the first-order language of $(\mathcal{D}; \leq)$ to make these true.
If this constant is not already there in the set of $\{C_1, \dots, C_k\}$ of Theorem \ref{46031} then its addition might be required to get that of Corollary \ref{58562}. 
As the equality of the sets is not stated anywhere, this is not a problem.
This affair is actually about $(\mathcal{D}; \leq)$, which is not the subject of our investigation here.
The interested reader should consult the first two sections of \cite{KunosII}.

We wish to make another remark on the lists $\{C_1, \dots, C_k\}$ to avoid false expectations. 
Naturally, as we proceed with our proof the lists $\{C_1, \dots, C_k\}$ will be continuously growing.
The final list is revealed late in the paper, and that is why we now outline it in advance.
To do so, we describe a family of our arguments used in the last, technical section of the paper.
Some properties of digraphs can be told by saying something about the list of their, say, at most 4-element substructures (without multiplicity, naturally).
For example one can tell if a digraph has loops based on the list of its 1-element substructures. 
Similarly, one can judge if it has a non-loop edge by the list of its (at most) 2-element substructures.
Far more complicated properties can be told in this way, say, {\it locally}.
We adopt this thinking in the last section of the paper.
This will force our lists  $\{C_1, \dots, C_k\}$ to be $\{ \text{at most 4-element digraphs}\}$.
This list is long but finite nevertheless.

The papers \cite{Jezek2009_1, Jezek2010, Jezek2009_3, Jezek2009_4, Kunos2015, Wires2016}, beyond dealing with definability, determined the automorphism groups of the orderings in question. 
In every case, the automorphisms came naturally and the automorphism groups were either trivial or isomorphic to $\mathbb{Z}_2$.
Despite all expectations, the partially ordered set $(\mathcal{D}, \sqsubseteq)$ stands out in that aspect. 
There are automorphisms far from trivial. 
Unfortunately, we are not able to determine the automorphism group, we can only prove it is finite.

\begin{theor}\label{15886}  The automorphism group of $(\mathcal{D}, \sqsubseteq)$ is finite.
\end{theor}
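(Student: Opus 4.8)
The plan is to obtain finiteness as a short consequence of the definability results already in hand, avoiding any direct level-by-level analysis of $(\mathcal{D};\sqsubseteq)$. First I would record the one structural fact that is genuinely needed: the number of vertices $|V(G)|$ is order-theoretically definable from $\sqsubseteq$ alone (for instance, the length of a longest chain below $G$ is a strictly increasing function of $|V(G)|$, since each cover removes exactly one vertex). Consequently every automorphism $\varphi$ of $(\mathcal{D};\sqsubseteq)$ preserves this rank, hence maps each level $L_n:=\{G:|V(G)|=n\}$ onto itself. Since each $L_n$ is finite, the image $\varphi(C)$ of any fixed digraph $C$ ranges over the finite set $L_{|V(C)|}$.

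Next I would invoke Corollary \ref{58562}: there is a fixed finite family $\{C_1,\dots,C_k\}$ of digraphs (all of bounded size) such that every digraph $G$ is definable in $(\mathcal{D};\sqsubseteq,C_1,\dots,C_k)$. Spelling this out in the pure language of $\sqsubseteq$, for each $G$ there is a formula $\psi_G(x,\bar y)$ with $(\mathcal{D};\sqsubseteq)\models\forall x\,(\psi_G(x,\bar c)\leftrightarrow x=G)$, where $\bar c=(C_1,\dots,C_k)$. The key step is then purely formal: because an automorphism preserves the truth of every $\{\sqsubseteq\}$-formula, applying $\varphi$ to this sentence yields $(\mathcal{D};\sqsubseteq)\models\forall x\,(\psi_G(x,\varphi(\bar c))\leftrightarrow x=\varphi(G))$. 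Thus $\varphi(G)$ is the unique solution of $\psi_G(x,\varphi(\bar c))$, so the entire map $\varphi$ is determined by the single finite tuple $\varphi(\bar c)=(\varphi(C_1),\dots,\varphi(C_k))$.

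From here finiteness is immediate: if $\varphi(\bar c)=\varphi'(\bar c)$ for two automorphisms, then for every $G$ both $\varphi(G)$ and $\varphi'(G)$ are the common unique solution of $\psi_G(x,\varphi(\bar c))$, whence $\varphi=\varphi'$. Hence $\varphi\mapsto\varphi(\bar c)$ embeds $\mathrm{Aut}(\mathcal{D};\sqsubseteq)$ into the finite set $\prod_{i}L_{|V(C_i)|}$, and the automorphism group is finite.

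The one point to get right — and the only place the argument could mislead — is that an automorphism of the bare poset is under no obligation to fix the constants $C_1,\dots,C_k$; it typically moves them. The resolution is exactly the rigidity above: although the constants move, each element is pinned down by its defining formula relative to wherever the constants are sent, so the finitely many possible images of the constant tuple already exhaust all automorphisms. I therefore expect no serious obstacle in the deduction itself, since all the difficulty has been front-loaded into Theorem \ref{46031} and Corollary \ref{58562}. I would also stress that this method yields only the bound $|\mathrm{Aut}(\mathcal{D};\sqsubseteq)|\le\prod_i|L_{|V(C_i)|}|$ and says nothing about which tuples $\varphi(\bar c)$ are actually realized; pinning down the group exactly (the conjectured $(\mathbb{Z}_2^4\times S_4)\rtimes_\alpha\mathbb{Z}_2$) would require determining precisely the realizable images, a separate and harder problem that remains open here.
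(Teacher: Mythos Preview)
Your proposal is correct and follows essentially the same approach as the paper's own proof: both derive finiteness from Corollary~\ref{58562} by observing that the image of the finite constant tuple $(C_1,\dots,C_k)$ under an automorphism determines the automorphism completely, and that each $C_i$ can only be sent into its (finite) level. The only cosmetic difference is that the paper phrases the key step as ``the only automorphism fixing all $C_i$ is the identity'' and then passes through a coset argument, whereas you go directly to ``$\varphi(\bar c)=\varphi'(\bar c)$ implies $\varphi=\varphi'$''; the underlying formula-preservation argument is the same.
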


Even though we can not prove it, we formulate a conjecture for the automorphism group.

In Section 2, we prove Theorem \ref{15886}, and tell our conjecture on the automorphism group in detail. Section 3 contains the proof of Theorem \ref{46031} without some technicalities. In Section 4, the reader finds the technicalities skipped in Section 3.

\section{On the automorphism group of $( \mathcal{D}; \sqsubseteq)$}

First, we prove Theorem \ref{15886} using Theorem \ref{46031}.

\begin{proof}[of Theorem \ref{15886}] It is clear that the orbits of the automorphism group are finite as an automorphism can only move a digraph inside its level in $(\mathcal{D}, \sqsubseteq)$. 
Let $o(G)$ denote the size of the orbit of the digraph $G$ (which is therefore a positive integer).

We state that it suffices to present a finite set of digraphs such that the only automorphism fixing them all is the identity. 
To prove that, let $\{C_1, \dots, C_k\}$ be such a set and $\varphi$ be an arbitrary automorphism.
Observe that the images of $C_i$ under $\varphi$ determine $\varphi$ completely, or in other words, the only automorphism agreeing with $\varphi$ on $\{C_1, \dots, C_k\}$ is $\varphi$.
Indeed, with the notations
$$S=\{\alpha \in \mathrm{Aut}( \mathcal{D}; \sqsubseteq): \alpha(C_i)=\varphi(C_i), \;\; i=1, \dots, k\},$$
and $S'=\{\alpha \varphi^{-1} : \alpha \in S\}$, $|S|=|S'|$ holds, and $|S'|=1$ for all elements of $S'$ fix all of $\{C_1, \dots, C_k\}$.
The fact that an automorphism is completely determined by its action on $\{C_1, \dots, C_k\}$ means that the automorphism group has at most $o(C_1)\cdot \ldots \cdot o(C_k)$ elements. That proves our statement.

Finally, we claim that $\{C_1, \dots, C_k\}$ of Theorem \ref{46031} suffices for the purpose above, namely the only automorphism fixing them all is the identity. 
Let $\varphi$ be an automorphism that fixes all $C_i$. 
Let $G\in \mathcal{D}$ be arbitrary. 
We need to show that $\varphi(G)=G$. 
We know from Corollary \ref{58562} that there exists a formula $\phi_G(x)$ with one free variable, that defines $G$ in the first-order language of $(\mathcal{D}, \sqsubseteq, C_1, \dots, C_k)$. 
If we change all occurrences of $C_i$ to $\varphi(C_i)$ in $\phi_G(x)$, then we get a formula $\phi_{\varphi(G)}(x)$ defining $\varphi(G)$. 
For $\varphi$ fixes all $C_i$s, $\phi_G(x)=\phi_{\varphi(G)}(x)$, implying $G=\varphi(G)$.
 \end{proof}

In the remaining part of the section, we present the automorphisms that we know of. 
Here, no claim is proven rigorously, they are all rather conjectures. 
Our intention is just to offer some insight on how the author sees the automorphism group at the moment. 

Before the (semi-)precise definition of our automorphisms, we feel it is useful to give a nontechnical glimpse at them.
Automorphisms map digraphs to digraphs in $\mathcal{D}$.
To define an automorphism $\varphi$, we need to tell how to get $\varphi(G)$ from $G$.
All the automorphisms, that we know of at the moment, share a particular characteristic.
They are all, say, {\it local} in the following sense.
Roughly speaking, to get $\varphi(G)$ from $G$, one only needs to consider and modify $G$'s at most two element substructures according to some given rule.

To make this clearer, we give an example. 
Let $\varphi(G)$ be the digraph that we get from $G$ such that we change the direction of the edges on those two element substructures of $G$ that have loops on both vertices.
It is easy to see that this defines an automorphism, indeed.
Perhaps, one would quickly discover the automorphism that gets $\varphi(G)$ by reversing all edges of $G$, but this is different.  
In this example, the modification of $G$ happens only locally, namely on 2-element substructures.
All the automorphisms, that we know of, share this property.

Now, we define some of our automorphisms, $\varphi_i$, (semi-)precisely. 
We tell how to get $\varphi_i(G)$ from $G$. 
One of the most trivial automorphisms is
\begin{itemize}
\item $\varphi_1$: where there is a loop, clear it, and vice versa, to the vertices with no loop, insert one.
\end{itemize}
Observe that this automorphism operates with the 1-element substructures. 
Now we start to make use of the labels of Fig. \ref{41511}.
\begin{itemize}
\item $\varphi_2$: change the substructures (isomorphic to) $E$ to $E'$ and vice versa.
\item $\varphi_3$: change the substructures (isomorphic to) $L$ to $L'$ and vice versa.
\item $\varphi_4$: reverse the edges in the substructures (isomorphic to) $P$.
\item $\varphi_5$: reverse the edges in the substructures (isomorphic to) $Q$.
\end{itemize}
Let $S_4$ denote the symmetric group over the four-element set $\{A,B,C,D\}$, and $\pi \in S_4$.
We define
\begin{itemize}
\item $\varphi_\pi$: We change the substructures (isomorphic to) $X \in \{A,B,C,D\}$ to $\pi(X)$ (such that the loops remain in place).
\end{itemize}
Observe that, with the exception of $\varphi_1$, the automorphisms defined above do not touch loops (when getting $\varphi_i(G)$ from $G$).
We conjecture that these automorphisms generate the whole automorphism group.

Finally, we investigate the structure of the group of our conjecture. 
Let $I$ denote the set of possible indexes of our $\varphi$s, namely
$$I=\{1,\dots,  5\} \cup \{\pi \in S_4\}.$$
Let $\langle \rangle$ stand for subgroup generation.
Let $S=\langle\varphi_i : i\in I\rangle$ denote the group of our conjecture.
It seems that $S$ splits into the internal semidirect product
$$S=\langle\varphi_i : i\in I\setminus\{1\}\rangle \rtimes \langle \varphi_1\rangle.$$
Furthermore, the first factor appears to be a(n internal) direct product
$$\langle\varphi_2 \rangle \times \langle\varphi_3\rangle \times \langle\varphi_4\rangle \times \langle\varphi_5\rangle \times \langle\varphi_\pi : \pi \in S_4\rangle.$$ 
Here, at the last factor, the subgroup generation is just a technicality as, clearly, the $\varphi_\pi$s constitute a subgroup themselves.
These observations all need a proper checking, but they give rise to the conjecture that $S$ is isomorphic to
$$(\mathbb{Z}_2^4 \times S_4)\rtimes_{\alpha} \mathbb{Z}_2,$$
where $S_4$, again, denotes the symmetric group over the set $\{A,B,C,D\}$, and $\alpha$ is the following. 
Obviously, $\alpha(0)=\text{id}\in  \mathrm{Aut}(\mathbb{Z}_2^4 \times S_4)$.
To define $\alpha(1)$, let $p,q,r,s \in \{0,1\}$ and $\pi \in S_4$.
Then
\begin{equation} \label{uzzrhuiwre}
\alpha(1): (p,q,r,s,\pi) \mapsto (q,p,s,r, (BC)\pi(BC)),
\end{equation}
where $(BC)$ is just the usual cycle notation of the permutation of $S_4$ that takes $B$ to $C$ and vice versa.
Note that the group of our conjecture has 768 elements.
Even though we cannot prove that there are no more automorphisms beyond the ones in $S$, we conjecture so.
\begin{nconjecture} The automorphism group of the partial order $(\mathcal{D}; \sqsubseteq)$ is isomorphic to $(\mathbb{Z}_2^4 \times S_4)\rtimes_{\alpha} \mathbb{Z}_2$, with the $\alpha$ defined above (around (\ref{uzzrhuiwre})).
\end{nconjecture}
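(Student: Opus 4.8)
The statement splits into two tasks: (A) identifying the abstract isomorphism type of the group $S=\langle\varphi_i:i\in I\rangle$, and (B) proving that $S$ exhausts $\mathrm{Aut}(\mathcal D;\sqsubseteq)$.

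For (A), the first point is that each $\varphi_i$ is genuinely an automorphism, and the uniform reason is locality: each rule is determined by inspecting the $\le 2$-element substructures of $G$, so it commutes with passing to an induced substructure, i.e. $\varphi_i(G)|_W=\varphi_i(G|_W)$ for every vertex set $W$. From this, $G\sqsubseteq H\Rightarrow\varphi_i(G)\sqsubseteq\varphi_i(H)$, and since each $\varphi_i$ is visibly an involution or (for the $\varphi_\pi$) invertible, it is an order-automorphism. The group law is then read off the action on the $\le 2$-element substructures of Fig. \ref{41511}. The generators $\varphi_2,\varphi_3,\varphi_4,\varphi_5,\varphi_\pi$ act on pairwise-disjoint families of $2$-element substructures — sorted by loop number ($0,2,0,2,1$ respectively) and by edge type (empty/double vs. single) — so they pairwise commute with trivial intersections, giving the internal direct product $\mathbb Z_2^4\times S_4$. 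Finally one computes the conjugation action of $\varphi_1$: toggling loops fixes $A$ and $D$ and swaps $B\leftrightarrow C$, so it acts on the index set $\{A,B,C,D\}$ as the transposition $(BC)$, whence $\varphi_1\varphi_\pi\varphi_1=\varphi_{(BC)\pi(BC)}$; likewise toggling loops carries the rule defining $\varphi_2$ to that defining $\varphi_3$ and the rule defining $\varphi_4$ to that defining $\varphi_5$, giving $\varphi_1\varphi_2\varphi_1=\varphi_3$ and $\varphi_1\varphi_4\varphi_1=\varphi_5$. These are exactly the coordinate swaps $(p,q,r,s)\mapsto(q,p,s,r)$ of (\ref{uzzrhuiwre}), so $S\cong(\mathbb Z_2^4\times S_4)\rtimes_\alpha\mathbb Z_2$ of order $768$. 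All of this is a finite, if slightly tedious, verification.

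Task (B) is the real content. The backbone I would use is the restriction homomorphism $r\colon\mathrm{Aut}(\mathcal D;\sqsubseteq)\to\mathrm{Aut}(\mathcal D_{\le 4})$ to the finite downward-closed sub-poset $\mathcal D_{\le 4}$ of digraphs with at most four vertices: every automorphism preserves levels, hence preserves $\mathcal D_{\le 4}$ setwise and restricts to a poset-automorphism of it. The key input is Corollary \ref{58562}: since the constants $\{C_1,\dots,C_k\}$ are exactly $\mathcal D_{\le 4}$, every digraph is definable from $\mathcal D_{\le 4}$, so the argument proving Theorem \ref{15886} shows that an automorphism fixing $\mathcal D_{\le 4}$ pointwise is the identity; that is, $r$ is injective. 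Consequently $\mathrm{Aut}(\mathcal D;\sqsubseteq)\cong r(\mathrm{Aut})$ sits between $r(S)$ (of order $768$, as $r$ is injective) and $\mathrm{Aut}(\mathcal D_{\le 4})$. It would therefore suffice to prove that $\mathrm{Aut}(\mathcal D_{\le 4})=r(S)$, a finite computation on an explicit, if large, poset.

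I expect the main obstacle to be precisely that last equality. Truncating at level $4$ discards the constraints that a digraph's level-$5$ (and higher) covers impose on it, so maximal elements of $\mathcal D_{\le 4}$ that are indistinguishable from below can be interchanged by a poset-automorphism of the truncation even though they are distinguished inside $\mathcal D$; I therefore expect $\mathrm{Aut}(\mathcal D_{\le 4})$ to be strictly larger than $768$, so that computing it alone does not close the argument. The honest route is then to prove the informal dichotomy underlying the list of $\varphi_i$: that every automorphism is \emph{local}, i.e. induced by a fixed rule on $\le 2$-element substructures. One can begin the bookkeeping level by level — normalising by a power of $\varphi_1$ to fix the two $1$-vertex digraphs, then using cover-counts to see that on level $2$ the orbits are forced to respect the loop number, so an automorphism permutes $\{E,P,E'\}$, $\{A,B,C,D\}$, $\{L,Q,L'\}$ within themselves and, modulo $\varphi_2,\varphi_3,\varphi_\pi$, acts trivially there. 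The difficulty is that $\varphi_4$ and $\varphi_5$ act trivially on levels $1$ and $2$ and first become visible on level $3$, so the analysis cannot stop at the truncation used for injectivity; one is pushed to control the action on levels $3,4,\dots$ simultaneously, and turning this informal locality into a genuine induction that terminates is, I believe, exactly where the proof currently resists completion.
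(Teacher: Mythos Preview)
The statement in question is a \textbf{Conjecture} in the paper, not a theorem. The author is explicit: ``Even though we cannot prove that there are no more automorphisms beyond the ones in $S$, we conjecture so,'' and even the internal structure of $S$ is introduced with the caveat ``These observations all need a proper checking.'' There is therefore no proof in the paper against which to compare your attempt.

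Your proposal is not a proof either, and you are candid about this. Your part~(A) is a reasonable sketch---indeed more detailed than anything the paper supplies---though the locality argument that each $\varphi_i$ commutes with restriction would still need a careful case check to be airtight. Your part~(B) correctly extracts from the proof of Theorem~\ref{15886} that the restriction map to $\mathcal D_{\le 4}$ is injective (since the constants $C_1,\dots,C_k$ are precisely the at-most-$4$-vertex digraphs), and you correctly anticipate that $\mathrm{Aut}(\mathcal D_{\le 4})$ is almost certainly strictly larger than $768$, so that computing it does not finish the job. Your final paragraph then names the genuine obstruction---upgrading the informal ``locality'' picture to a rigorous proof that every automorphism of $(\mathcal D;\sqsubseteq)$ is induced by a rule on $\le 2$-element substructures---and concedes that this ``currently resists completion.'' That is an accurate diagnosis of an open problem; there is no further gap to point to beyond the one you have already identified yourself.
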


\section{The proof of Theorem \ref{46031} without some technicalities}

As long and technical as it may seem, the whole proof of Theorem \ref{46031} is based on a simple idea, which we outline here. 
We get substructures of a directed graph by leaving out vertices, while, to get  embeddable digraphs, we can leave out vertices and edges both. 
We want to define the latter, so we need to be able to leave out edges somehow.  
Our main idea is the following. 
In a digraph $G$, if there is an edge $(u, v) \in E(G)$, then we add a vertex and two edges to ``support'' the edge $(u, v)$. 
Namely, we add $w$ to the set of vertices, and the edges $(u, w)$ and $(w, v)$ to the set of edges.
After the addition, we say that the edge $(u, v)$ is ``supported''.
The idea is that the supportedness of an edge can be terminated by leaving out a vertex, in the previous example $w$, what we can do by taking substructures.
Roughly, what we should do is: support all edges, take a substructure, and in one more step, leave only the supported edges in. 
Of course, there seem to be many problems with this (if told in such a simplified way).
Firstly, how can we distinguish between the supporting vertices and the original ones?
This appears to be an essential part of the plan.
Secondly, the plan ended with ``leave only the supported edges in'' which just looks running into the original problem again: we cannot leave edges out.
Even though the plan seems flawed for these reasons, it is manageable. 
The whole section is no more than building the apparatus and carrying it out.

After these paper-specific lines, we talk a little more general. 
There is a method that is often used in the family of papers that we could call ``Definability...'' papers, a family already mentioned in the introduction and encompassing the present paper.
This method is marking the vertices of our structures with distinguishable markers, in this particular paper, large circles.
This is a natural approach that enables us to work with our structures elementwise, and that seems unavoidable.
Referring to the elements piecewise seems only possible if they are properly distinguished. 
Personally, the author saw this approach first in the series of papers \cite{Jezek2009_1, Jezek2010, Jezek2009_3, Jezek2009_4} by Je\v{z}ek and McKenzie, but, being so natural, wouldn't be surprised to learn it emerged earlier in the literature.

\begin{definition}\label{108045}
In this section, we use three particular automorphisms:
\begin{itemize}
\item the {\it loop-exchange automorphism}, denoted by $l$, which is $\varphi_1$ (of the previous section),
\item the {\it edge-reverse (transposition) automorphism}, denoted by $t$, which just reverses all edges in the digraphs, and
\item the {\it complement automorphism}, denoted by $c$, which replaces $E(G)$ with its complement, $V(G)^2 \setminus E(G)$.
\end{itemize}
\end{definition}

Some basic definitions follow 

\begin{definition}\label{349587897819} For digraphs $G, G'\in \mathcal{D}$, let $G\mathrel{\dot{\cup}} G'$ denote their disjoint union, as usual.
\end{definition}

\begin{definition}\label{defE_n} Let ${E_n}$ $(n=1,2,\dots )$ denote the ``empty'' digraph with $n$ vertices and ${F_n}$ $(n=1,2,\dots )$ denote the ``full'' digraph with $n$ vertices: $$V(E_n)=\{v_1,v_2,\dots, v_n\},\;\;  E(E_n)=\emptyset,$$
$$V(F_n)=\{v_1,v_2,\dots, v_n\}, \;\; E(F_n)=V(F_n)^2.$$
\end{definition}

\begin{definition}\label{defIOL}
Let $I_n$ $(n=1,2,\dots)$, $O_n$ $(n=3,4,\dots)$, and $L_n$ $(n=1,2,\dots)$ be the following (Fig. \ref{O_n abra}.) digraphs: 
\begin{displaymath}
V(I_n)=V(O_n)=V(L_n)=\{v_1,v_2,\dots, v_n\}, 
\end{displaymath}
\begin{displaymath}
E(I_n)=\{(v_1,v_2),(v_2,v_3),\dots, (v_{n-1}, v_n) \},
\end{displaymath}
\begin{displaymath}
E(O_n)=\{(v_1,v_2),(v_2,v_3),\dots, (v_{n-1}, v_n), (v_n, v_1)\},
\end{displaymath}
\begin{displaymath}
E(L_n)=\{(v_1,v_1),(v_2,v_2),\dots, (v_n, v_n)\}.
\end{displaymath}
The digraphs $I_n$ are called {\it lines}, and the digraphs $O_n$ are called {\it circles}.
\end{definition}

Note $E_1 =I_1$.

\begin{figure}[h]
\begin{center}
\begin{tikzpicture}[line cap=round,line join=round,>=triangle 45,x=0.6cm,y=0.6cm]
\clip(2.22,0.5) rectangle (13.1,5.88);
\draw [->] (3.38,1.82) -- (3.4,2.68);
\draw [->] (3.4,2.68) -- (3.38,3.58);
\draw [->] (3.38,3.58) -- (3.36,4.54);
\draw [->] (3.36,4.54) -- (3.34,5.42);
\draw [->] (5.96,1.96) -- (5.36,3.54);
\draw [->] (5.36,3.54) -- (6,5);
\draw [->] (6,5) -- (7.8,4.98);
\draw [->] (7.8,4.98) -- (8.52,3.58);
\draw [->] (8.52,3.58) -- (8,2);
\draw [->] (8,2) -- (5.96,1.96);
\draw (3.18,1.28) node[anchor=north west] {$I_5$};
\draw (6.65,1.27) node[anchor=north west] {$O_6$};
\draw (11.1,1.3) node[anchor=north west] {$L_6$};
\begin{scriptsize}
\fill [color=black] (3.38,1.82) circle (1.5pt);
\draw[color=black] (3.52,2.1);
\fill [color=black] (3.4,2.68) circle (1.5pt);
\draw[color=black] (3.56,2.96);
\fill [color=black] (3.38,3.58) circle (1.5pt);
\draw[color=black] (3.54,3.86);
\fill [color=black] (3.36,4.54) circle (1.5pt);
\draw[color=black] (3.52,4.82);
\fill [color=black] (3.34,5.42) circle (1.5pt);
\draw[color=black] (3.5,5.7);
\fill [color=black] (5.96,1.96) circle (1.5pt);
\draw[color=black] (6.1,2.24);
\fill [color=black] (5.36,3.54) circle (1.5pt);
\draw[color=black] (5.52,3.82);
\fill [color=black] (6,5) circle (1.5pt);
\draw[color=black] (6.16,5.28);
\fill [color=black] (7.8,4.98) circle (1.5pt);
\draw[color=black] (7.9,5.26);
\fill [color=black] (8.52,3.58) circle (1.5pt);
\draw[color=black] (8.66,3.86);
\fill [color=black] (8,2) circle (1.5pt);
\draw[color=black] (8.16,2.28);
\fill [color=black] (10.56,2.02) circle (1.5pt);
\draw[color=black] (10.7,2.3);
\fill [color=black] (12.1,2.04) circle (1.5pt);
\draw[color=black] (12.26,2.32);
\fill [color=black] (10.56,3.48) circle (1.5pt);
\draw[color=black] (10.72,3.76);
\fill [color=black] (12.2,3.5) circle (1.5pt);
\draw[color=black] (12.36,3.78);
\fill [color=black] (10.52,4.78) circle (1.5pt);
\draw[color=black] (10.68,5.06);
\fill [color=black] (12.14,4.82) circle (1.5pt);
\draw[color=black] (12.3,5.1);
\draw[rotate around={-90:(10.56,2.02)}] [->] (10.56,2.02) arc (360:10:5pt);
\draw[rotate around={-90:(12.1,2.04)}] [->] (12.1,2.04) arc (360:10:5pt);
\draw[rotate around={-90:(10.56,3.48)}] [->] (10.56,3.48) arc (360:10:5pt);
\draw[rotate around={-90:(12.2,3.5)}] [->] (12.2,3.5) arc (360:10:5pt);
\draw[rotate around={-90:(10.52,4.78)}] [->] (10.52,4.78) arc (360:10:5pt);
\draw[rotate around={-90:(12.14,4.82)}] [->] (12.14,4.82) arc (360:10:5pt);
\end{scriptsize}
\end{tikzpicture}
\caption{}
\label{O_n abra}
\end{center}
\end{figure}

\begin{definition} A directed graph is called an {\it $IO$-graph} if it satisfies the following conditions. 
The only one-element substructure of it is $E_1$. 
If $X$ is a two-element substructure then it is either $E_2$ or $I_2$. 
If $X$ is a three-element substructure then $X$ is $E_3$, or $I_2 \dcup E_1$, or $I_3$, or $O_3$. 
Let the  set of $IO$-graphs be denoted by $IO$.
\end{definition}

\begin{nlemma} The set $IO$ is definable.
\end{nlemma}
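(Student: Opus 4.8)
The plan is to reduce membership in $IO$ to a finite list of \emph{forbidden-substructure} conditions, each of which becomes an atomic order formula once the relevant small digraphs are available as definable elements. First I would rewrite the defining clauses in negative form. The three clauses say that the one-, two-, and three-element substructures of $G$ all lie in prescribed finite lists, so membership in $IO$ amounts to forbidding every small digraph \emph{outside} those lists. The first clause is equivalent to $L_1 \nsqb G$: a looped vertex is the only one-element digraph besides $E_1$, and since $G$ is nonempty, $L_1 \nsqb G$ already forces $E_1 \sqsubseteq G$, so the positive part of the clause is absorbed. Granting loop-freeness, the only two-element loop-free digraph besides $E_2$ and $I_2$ is the bidirectional pair $E'$, so the second clause becomes $E' \nsqb G$. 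Granting both, every three-element substructure is an orientation of a loopless, bidirectional-edge-free graph on three vertices; besides $E_3$, $I_2 \dcup E_1$, $I_3$, $O_3$ there are exactly three such digraphs, namely the ``in-star'' $\bullet\!\to\!\bullet\!\leftarrow\!\bullet$, the ``out-star'' $\bullet\!\leftarrow\!\bullet\!\to\!\bullet$, and the transitive triangle. Hence
\[
 G \in IO \iff L_1 \nsqb G \ \wedge\ E' \nsqb G \ \wedge\ (\text{in-star}) \nsqb G \ \wedge\ (\text{out-star}) \nsqb G \ \wedge\ (\text{transitive triangle}) \nsqb G .
\]

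Since $X \sqsubseteq x$ is literally the atomic relation of the structure, the right-hand side is a first-order formula $\phi_{IO}(x)$ the instant each of the five listed digraphs is a definable element. This is exactly where constants are unavoidable: the loop-exchange automorphism $l=\varphi_1$ carries $E_1$ to $L_1$ (and, more generally, loop-free graphs to fully looped ones), while $\varphi_2$ interchanges the edgeless pair $E_2$ and the bidirectional pair $E'$, so no formula in the bare language of $(\mathcal{D};\sqsubseteq)$ can separate these pairs. Indeed $IO$ itself is not $l$-invariant, hence not definable without constants. In the present setting this costs nothing, because the constant list $\{C_1,\dots,C_k\}$ consists of all digraphs on at most four vertices, and each of $L_1$, $E'$, the in-star, the out-star, and the transitive triangle has at most three vertices; thus all five occur among the $C_i$, and $\phi_{IO}$ is a genuine formula of $(\mathcal{D};\sqsubseteq,C_1,\dots,C_k)$.

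The only real work is the finite case analysis underlying the displayed equivalence: enumerating the loop-free, bidirectional-edge-free digraphs on at most three vertices and checking that forbidding the five listed ones is precisely the complement of the allowed lists. Up to isomorphism there are $1+1+3+2=7$ three-vertex orientations (by underlying edge count $0,1,2,3$), of which four are allowed and three are the forbidden ones above, so nothing is omitted or double-counted. I expect the main (and essentially only) subtlety to be this bookkeeping, together with checking that the negative reformulation of each ``the only substructures are \dots'' clause correctly recovers its positive part under the preceding clauses (as with $L_1 \nsqb G$ recovering $E_1 \sqsubseteq G$). Everything past this verification is a direct transcription into the order language.
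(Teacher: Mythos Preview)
Your proposal is correct and takes essentially the same approach as the paper: the paper's one-line proof simply observes that the defining clauses for $IO$ are already first-order once the small digraphs are available as constants, and you have just made this explicit by rewriting the clauses as a finite forbidden-substructure list and carrying out the case analysis. The extra discussion of why constants are unavoidable (via the automorphisms $\varphi_1,\varphi_2$) is accurate and helpful context, though not required for the argument itself.
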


\begin{proof} Observe that the set $IO$ is already given by a first-order definition, using the one, two, and three element digraphs as constants.
 \end{proof}

Observe that the set $IO$ is closed under taking substructures. The following lemma motivates our notation $IO$.

\begin{nlemma} A directed graph is an IO-graph if and only if it is a disjoint union of lines and/or circles.
\end{nlemma}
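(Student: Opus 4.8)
The plan is to route both implications through an equivalent, purely local reformulation of the $IO$-conditions as degree bounds, and then to invoke the elementary structure theory of digraphs with bounded in- and out-degree.

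First I would read the three defining clauses as three global constraints on an $IO$-graph $G$. The one-element clause (only $E_1$ occurs) says $G$ is loopless; the two-element clause (only $E_2$ and $I_2$ occur, never the reciprocal pair $E'$) says $G$ is an oriented graph, with at most one edge between any two vertices. The key is the three-element clause: each of $E_3$, $I_2 \dcup E_1$, $I_3$, $O_3$ has all in-degrees and out-degrees at most $1$. Hence if some vertex $v$ had out-degree $\geq 2$, say $v \to a$ and $v \to b$, the induced substructure on $\{v,a,b\}$ would exhibit a vertex of out-degree $2$ and so could not lie in the permitted list; the in-degree case is symmetric, and graphs on fewer than three vertices are checked by hand. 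This shows every vertex of $G$ has in-degree and out-degree at most $1$.

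The main step is then the structure theorem: a loopless oriented digraph with all in- and out-degrees at most $1$ is a disjoint union of directed paths $I_n$ and directed cycles $O_n$ with $n \geq 3$. I would argue that its underlying undirected graph is simple with maximum degree $\leq 2$, hence a disjoint union of undirected paths and cycles, and then check that the degree bounds force a consistent orientation on each component: at an interior vertex the two incident edges can neither both enter (in-degree $2$) nor both leave (out-degree $2$), so a path must be uniformly oriented and equal some $I_n$, while on a cycle every vertex has in-degree and out-degree exactly $1$, propagating a single cyclic orientation and giving some $O_n$; loops and reciprocal pairs being already excluded forces the cycle length to be $\geq 3$. This yields the forward implication.

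For the converse I would simply note that a disjoint union of lines and circles is loopless, reciprocal-free, and has all degrees $\leq 1$, and that these three properties pass to every induced substructure; enumerating the loopless, reciprocal-free, degree-$\leq 1$ digraphs on at most three vertices returns exactly $E_1$; $E_2, I_2$; and $E_3, I_2 \dcup E_1, I_3, O_3$, which is precisely the defining list. I expect no serious obstacle; the only delicate point is the orientation-consistency argument in the structure theorem, where one must make sure the degree bounds genuinely pin down a uniform direction along each component rather than merely constraining the underlying undirected shape.
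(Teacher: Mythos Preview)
Your argument is correct. You correctly extract from the three clauses that an $IO$-graph is loopless, has no antiparallel edges, and has every in-degree and out-degree at most $1$; the classification of such digraphs as disjoint unions of directed paths and directed cycles is carried out cleanly, and the converse enumeration on at most three vertices is accurate.

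The paper takes a different, terser route: it simply says that straightforward induction on the number of vertices suffices, using that the class $IO$ is closed under substructures. In that scheme one removes a vertex, applies the inductive hypothesis to get a union of lines and circles, and then checks (via the two- and three-element constraints) that the removed vertex can only reattach as an isolated point, an endpoint extension of a line, or a vertex closing a line into a circle. Your approach bypasses the induction entirely by first converting the local substructure conditions into global degree bounds and then invoking the structure theory of digraphs with in-/out-degree at most $1$. The two arguments are close in spirit, since the inductive step would in practice rediscover exactly your degree constraints, but your version is more explicit and self-contained, while the paper's is shorter to state once one is willing to leave the verification to the reader.
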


\begin{proof} Straightforward induction on the number of vertices suffices, using the closedness mentioned prior to the lemma.
 \end{proof}

\begin{nlemma} The set $\{ O_n: n\geq 3\}$ is definable.
\end{nlemma}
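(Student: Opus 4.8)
The plan is to characterize the circles purely through the covering relation of the poset, relativized to the already-definable set $IO$. Recall that in any graded poset the covering relation is first-order definable: write $H \prec G$ for ``$H \sqsubset G$ and there is no $Z$ with $H \sqsubset Z \sqsubset G$''; here $H \prec G$ holds exactly when $H$ arises from $G$ by deleting a single vertex. Since $IO$ is definable, I can speak of the \emph{upper covers of $G$ inside $IO$}, i.e. those $G'$ with $G \prec G'$ and $G' \in IO$. The whole definition will be built from this relativized cover relation together with $\sqsubseteq$-minimality, so no constants beyond those already used for $IO$ are needed.

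The heart of the argument is the following claim: an $IO$-graph $G$ has exactly one upper cover inside $IO$ if and only if $G$ is a nonempty disjoint union of circles. Granting this, let $\mathcal{Q}$ denote the (definable) set of $IO$-graphs with a unique $IO$-upper-cover. I would then finish by observing that $\{O_n : n \geq 3\}$ is precisely the set of $\sqsubseteq$-minimal elements of $\mathcal{Q}$: the substructures of an $IO$-graph are again disjoint unions of lines and circles, so a single circle $O_b$ has only unions of lines strictly below it and hence is minimal in $\mathcal{Q}$; whereas a union $O_{b_1} \dcup \cdots \dcup O_{b_j}$ with $j \geq 2$ properly contains the single circle $O_{b_1} \in \mathcal{Q}$, so is not minimal. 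As minimality is first-order, this completes the definition.

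The forward direction of the claim is the part I expect to be the main obstacle, and I would prove it by exhibiting a \emph{second} upper cover whenever $G$ is not a union of circles. If $G$ has an isolated vertex, write $G = G_0 \dcup E_1$; then $G_0 \dcup I_2$ and $G_0 \dcup E_2$ are two distinct $IO$-upper-covers (they differ by an edge), so $G \notin \mathcal{Q}$. Thus any $G \in \mathcal{Q}$ has no isolated vertex, and its components are lines $I_a$ $(a \geq 2)$ and circles. If such a $G$ still had a line component $I_a$, then replacing it by $I_{a+1}$ yields an upper cover with no isolated vertex, while $G \dcup E_1$ is an upper cover with an isolated vertex; these are non-isomorphic, again giving two covers. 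Hence every $G \in \mathcal{Q}$ is a union of circles. For the reverse direction I would invoke the structural rigidity of circles: $O_b$ occurs as an induced substructure of an $IO$-graph only as a whole connected component (a line contains no circle, and $O_b$ is not induced in $O_c$ for $b \neq c$). Consequently the only way to extend $O_{b_1} \dcup \cdots \dcup O_{b_j}$ by one vertex inside $IO$ while preserving all the circles as induced substructures is to append an isolated vertex, so $O_{b_1} \dcup \cdots \dcup O_{b_j} \dcup E_1$ is its unique $IO$-upper-cover. This same component-rigidity is exactly what justifies the minimality step above, so it is really the single fact on which the whole proof turns.
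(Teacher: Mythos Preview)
Your proof is correct, and it takes a route genuinely different from the paper's. The paper works with unique \emph{lower} covers inside $IO$: it shows that the $IO$-graphs with a unique lower cover are exactly the digraphs of the form $k\cdot G$ (a $k$-fold disjoint union of a single $G\in\{E_1,I_2\}\cup\{O_n:n\ge 3\}$), and then singles out the circles as the $\sqsubseteq$-minimal members of that set which additionally contain $I_3$ or $O_3$. You instead use unique \emph{upper} covers inside $IO$ and obtain the cleaner characterization $\mathcal{Q}=\{\text{nonempty unions of circles}\}$, after which plain minimality suffices with no extra side condition. Your component-rigidity argument (that an induced copy of $O_b$ in an $IO$-graph must be an entire component) is exactly the right tool and is what makes the upper-cover picture tidier: it forces the unique cover of any $G\in\mathcal{Q}$ to be $G\dcup E_1$. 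The trade-off is that the paper's lower-cover step is essentially a one-line observation (deleting a vertex from $k\cdot G$ always gives the same thing), whereas your forward direction requires the short case split on isolated vertices and line components; conversely, the paper needs the auxiliary $I_3/O_3$ filter that your approach avoids. Both arguments stay within the constants already used to define $IO$.
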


\begin{proof} It is clear that all elements of the set are $IO$-graphs, we just need to choose which. It is easy to see that, in $IO$, those that have a unique lower-cover (within $IO$) are:
$$\underbrace{G\dcup \dots \dcup G}_{\text{$k$ copies}}\text{, where }G\in \{E_1, I_2\}\cup \{O_n:n\geq 3\},$$
for $k\geq 1$ except when $X=E_1$, then $k>1$. In this set, the desired digraphs are exactly those that are minimal (in this particular set) and have $I_3$ or $O_3$ as a substructure. 
 \end{proof}

\begin{definition} A digraph is called {\it loop-full} if all vertices have loops on them, and {\it loop-free} if none. The {\it loop-full part} of a digraph is the maximal loop-full substructure of it, and the {\it loop-free part} is the maximal loop-free substructure.
\end{definition}

\begin{nlemma}\label{fkuerz8w9} The relation
$$\{(G, F, G\dcup F): G, F\in\mathcal{D},\text{ $G$ is loop-full and $F$ is loop-free}\}$$
is definable.
\end{nlemma}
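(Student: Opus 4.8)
The plan is to reduce the displayed relation to three separately definable conditions on the triple $(G,F,H)$, all expressible once the one- and two-element digraphs are available as constants (as they will be in the final list). First, the predicates ``loop-full'' and ``loop-free'' are immediately definable: a digraph $X$ is loop-full exactly when $E_1 \nsqb X$ (no vertex is loopless) and loop-free exactly when $L_1 \nsqb X$ (no vertex carries a loop), since $E_1$ and $L_1$ are the two one-element digraphs and an induced single-vertex substructure records precisely the loop/no-loop status of that vertex.

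The key idea for capturing the disjoint union order-theoretically, without having to count vertices, is to pin down $G$ and $F$ as the loop-full and loop-free \emph{parts} of $H$ by a maximality condition. I would first observe that the loop-full part of $H$ (the substructure induced on the loop-bearing vertices) is the $\sqsubseteq$-largest loop-full substructure of $H$: any loop-full $G' \sqsubseteq H$ must live on loop-bearing vertices of $H$ and hence is itself a substructure of that induced part. Thus ``$G$ is the loop-full part of $H$'' is definable as: $G$ is loop-full, $G \sqsubseteq H$, and every loop-full $G'$ with $G' \sqsubseteq H$ satisfies $G' \sqsubseteq G$; symmetrically for ``$F$ is the loop-free part of $H$''. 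Imposing these two clauses forces $V(G)$ to be exactly the loop vertices of $H$ and $V(F)$ exactly the loopless vertices, so that $V(H)$ is partitioned by $V(G)$ and $V(F)$ automatically; this is precisely what lets me avoid any rank/level counting.

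The remaining, genuinely content-bearing condition is that $H$ has \emph{no edges between its loop part and its loop-free part}; together with the previous step this yields exactly $H = G \dcup F$. Here I would exploit that such a cross edge is witnessed locally: an edge joining a loop vertex to a loopless vertex produces, on those two vertices, an induced two-element substructure having exactly one loop together with at least one non-loop edge. Up to isomorphism there are only three such digraphs (a single loop plus an out-edge, plus an in-edge, or plus both edges), and each is a two-element digraph, hence available as a constant. So ``$H$ has no cross edge'' is the conjunction that none of these three digraphs is $\sqsubseteq H$. Assembling the three blocks gives a first-order formula $\Psi(G,F,H)$, and I would finish by checking both directions: if $\Psi$ holds then $G,F$ are the two parts and no cross edge survives, so $H = G \dcup F$; conversely a genuine disjoint union of a loop-full $G$ and a loop-free $F$ plainly satisfies all three blocks. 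The only step demanding care is this last one: one must verify that the three forbidden two-element substructures truly exhaust the obstructions, which is exactly why it is safest to reason locally on two-vertex induced substructures rather than on $H$ globally.
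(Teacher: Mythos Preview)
Your proof is correct and follows essentially the same approach as the paper: characterize $G$ and $F$ as the loop-full and loop-free parts of $H$ (via maximality among loop-full, resp.\ loop-free, substructures), and forbid the three two-element substructures carrying exactly one loop together with a non-loop edge. The paper's proof is terser, invoking the already-introduced notions of ``loop-full part'' and ``loop-free part'' directly, but the content is the same.
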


\begin{proof} The relation consists of those triples $(X,Y,Z)$ for which
\begin{itemize}
\item $X$ is the loop-full part of $Z$, 
\item $Y$ is the loop-free part of $Z$, and
\item there is no two element substructure of $Z$ that consists exactly one loop and has a non-loop edge in it.
\end{itemize}
 \end{proof}

\begin{definition} Let $L_{\to}$ denote the digraph with $$V(L_{\to})=\{v_1, v_2\},\text{ and }E(G)=\{(v_1, v_1), (v_1, v_2)\}.$$
\end{definition}

\begin{definition}\label{060639} Let $G$ be a loop-full digraph with $V(G)=\{v_1, \dots, v_n\}$. Then $l(G)$ is loop-free. Let the set of its vertices be $l(G)=\{v'_1, \dots, v'_n\}$ (to recall $l$, see Def. \ref{108045}) with
$$\text{for }i\neq j:\; (v'_i,v'_j)\in E(l(G)) \Leftrightarrow (v_i,v_j)\in E(G).$$
Let $G\to l(G)$ denote the digraph for which
$$V(G\to l(G))=V(G) \cup V(l(G)), \text{ and}$$
$$E(G\to l(G))=E(G)\cup E(l(G)) \cup \{(v_i, v'_i): 1\leq i \leq n\}.$$
\end{definition}

\begin{nlemma}\label{vxjfhdsuhiz} The relation
$$\{(G, l(G), G\to l(G)): G\in\mathcal{D},\text{ $G$ is loop-full}\}$$
is definable.
\end{nlemma}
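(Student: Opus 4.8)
The plan is to recover $G$ and $l(G)$ from $Z:=G\to l(G)$ as order-theoretic invariants and then to pin $Z$ down by a finite list of conditions on its small substructures, in the spirit of the $\le 4$-element descriptions promised in the introduction. First I would extract the two parts: in a candidate triple $(X,Y,Z)$ I require that $X$ be loop-full, that $Y$ be loop-free, that $X$ be the loop-full part of $Z$ and $Y$ the loop-free part of $Z$ (so the two parts automatically exhaust $V(Z)$, since every vertex either carries a loop or does not). All of this is available from the constants and from the machinery already used in Lemma \ref{fkuerz8w9}. The remaining task is to express that the edges of $Z$ running between the two parts form a structure-preserving perfect matching, since $Z$ is exactly $X\dcup l(X)$ together with the matching $\{(v_i,v'_i)\}$.

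The matching itself I would control \emph{locally}. The two-element substructures of $Z$ meeting both parts must be either $L_\to$ (a matched pair) or the edgeless looped/loopless pair; in particular I forbid the reversed copy of $L_\to$ and the two-cycle between a looped and a loopless vertex, which guarantees that every cross edge points from the loop-full to the loop-free part. Next I forbid the finitely many three-element substructures in which one looped vertex emits two cross edges, or two looped vertices send cross edges into one loopless vertex; this makes the cross edges an injective partial matching on both sides. Finally, structure preservation is a four-element condition: I forbid every four-element substructure consisting of two matched pairs (two disjoint copies of $L_\to$) whose looped vertices are joined while their loopless vertices are not, and vice versa. Since edges are binary, this pairwise condition forces the matching to preserve edges wherever it is defined, so that matched vertices of $X$ and $Y$ carry identical non-loop neighbourhoods.

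The genuine obstacle is \emph{totality}: the conditions so far are all satisfied by a digraph in which only some vertices are matched, and such a $Z$ is a different isomorphism type that must be excluded. This cannot be done by forbidding bounded substructures, because an unmatched looped vertex is locally indistinguishable from a matched one viewed against a non-partner (the induced three- and four-element pictures coincide). My resolution is to use a global—yet definable—witness supplied by Lemma \ref{fkuerz8w9}: I additionally require that neither $X\dcup E_1$ nor $L_1\dcup Y$ be a substructure of $Z$. Indeed, by the cleanliness of the matching an unmatched loopless vertex is non-adjacent to \emph{all} looped vertices, so together with a full copy of $X$ it would induce $X\dcup E_1$; forbidding $X\dcup E_1$ therefore saturates the loop-free side. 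Symmetrically, an unmatched looped vertex is non-adjacent to the whole loop-free part and would induce $L_1\dcup Y$, so forbidding $L_1\dcup Y$ saturates the loop-full side. Both of these digraphs are disjoint unions of a loop-full and a loop-free digraph, hence definable via the relation of Lemma \ref{fkuerz8w9}.

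Putting these together, the cross edges become a bijection between the two parts preserving non-loop edges; consequently $Y=l(X)$ and $Z\cong X\to l(X)$, which is exactly the relation to be defined. I expect the only delicate point to be the totality step just described; the direction, injectivity and structure-preservation clauses reduce to listing a fixed, finite family of forbidden small digraphs and are routine to verify case by case.
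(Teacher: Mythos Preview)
Your proposal is correct and follows essentially the same route as the paper: the paper also takes $X,Y$ as the loop-full and loop-free parts of $Z$, restricts the mixed two-element substructures to $L_{\to}$, excludes the three-element configurations where one looped vertex has two outgoing cross edges or one loopless vertex receives two (the first two pictures of the relevant figure), uses precisely $X\dcup E_1\nsqb Z$ and $Y\dcup L_1\nsqb Z$ via Lemma~\ref{fkuerz8w9} to force totality, and finally forbids the asymmetric four-element configurations (the third picture) for edge preservation. Only the order of presentation differs.
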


\begin{proof}
Let us consider the triples $(X,Y,Z)$ for which
\begin{itemize}
\item $X$ is the loop-full part of $Z$, and $Y$ is the loop-free part of $Z$, 
\item $X\dcup E_1 \nsqb Z$, and $Y\dcup L_1 \nsqb Z$ (both are definable by Lemma \ref{fkuerz8w9}), 
\item on two points, the only substructure having exactly one loop and at least one non-loop edge is $L_{\to}$, and
\item no digraph of the first two pictures of Fig. \ref{xcljsdisro} is a substructure. We consider the dashed edges possibilities, either we draw them (individually) or not. In this way, there are $6$ (isomorphism types) encoded into the first two pictures of Fig. \ref{xcljsdisro}. We exclude them all.
\end{itemize}
Now we have ensured that the edges $L_{\to}$ constitute a bijection between the vertices of $X$ and $Y$ in $Z$. It only remains to force this bijection to be edge and non-edge preserving as well. This can be done by requiring the additional the property
\begin{itemize}
\item Consider the third picture of Figure \ref{xcljsdisro} as before, the dashed edges are possibilities. We forbid those from being substructures in which the dashed edges are not symmetrically drawn on the two (loop-full and loop-free) sides.
\end{itemize}
 \end{proof}

\begin{figure}[h]
\begin{center}
\begin{tikzpicture}[line cap=round,line join=round,>=triangle 45,x=0.71cm,y=0.71cm]
\clip(0.5,0.5) rectangle (10,3.5);

\fill [color=black] (1,1) circle (1.5pt);
\draw[rotate around={90:(1,1)}] [->] (1,1) arc (360:10:5pt);
\fill [color=black] (1,3) circle (1.5pt);
\draw[rotate around={-90:(1,3)}] [->] (1,3) arc (360:10:5pt);
\fill [color=black] (2,2) circle (1.5pt);

\draw [->,dashed]  (1,1) .. controls(0.5,2) ..  (1,3);
\draw [->,dashed]  (1,3) .. controls(1.5,2) ..  (1,1);
\draw [->]  (1,1) --  (2,2);
\draw [->]  (1,3) --  (2,2);

\fill [color=black] (4,2) circle (1.5pt);
\draw[rotate around={0:(4,2)}] [->] (4,2) arc (360:10:5pt);
\fill [color=black] (5,1) circle (1.5pt);
\fill [color=black] (5,3) circle (1.5pt);

\draw [->,dashed]  (5,1) .. controls(4.5,2) ..  (5,3);
\draw [->,dashed]  (5,3) .. controls(5.5,2) ..  (5,1);
\draw [->]  (4,2) --  (5,1);
\draw [->]  (4,2) --  (5,3);

\fill [color=black] (7,1) circle (1.5pt);
\draw[rotate around={80:(7,1)}] [->] (7,1) arc (360:10:5pt);
\fill [color=black] (7,3) circle (1.5pt);
\draw[rotate around={-80:(7,3)}] [->] (7,3) arc (360:10:5pt);

\fill [color=black] (9,1) circle (1.5pt);
\fill [color=black] (9,3) circle (1.5pt);

\draw [->,dashed]  (7,1) .. controls(6.5,2) ..  (7,3);
\draw [->,dashed]  (7,3) .. controls(7.5,2) ..  (7,1);
\draw [->,dashed]  (9,1) .. controls(8.5,2) ..  (9,3);
\draw [->,dashed]  (9,3) .. controls(9.5,2) ..  (9,1);
\draw [->]  (7,1) --  (9,1);
\draw [->]  (7,3) --  (9,3);

\end{tikzpicture}
\caption{}
\label{xcljsdisro}
\end{center}
\end{figure}

We are going to need some basic arithmetic later. We define addition in the following lemma.

\begin{nlemma}\label{cxnsdf9ds} The following relation is definable:
$$\{(E_n,E_m,E_{n+m}): n,m\geq 1\}.$$
\end{nlemma}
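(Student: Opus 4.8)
The plan is to realise addition on the empty graphs as the passage ``a disjoint union of an $n$-set and an $m$-set has $n+m$ elements'', using the disjoint-union relation of Lemma \ref{fkuerz8w9} as the only genuine additive engine, and then to read off the total number of vertices of the resulting (mixed) digraph as an empty graph by means of an equinumerosity gadget modelled on the construction $G\to l(G)$ of Lemma \ref{vxjfhdsuhiz}.

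First I would isolate the two auxiliary classes I need. The set $\{E_n:n\ge 1\}$ is definable as the class of $G$ with no loop and no non-loop edge, i.e.\ $L_1\nsqb G$ and $I_2\nsqb G$; symmetrically $\{L_n:n\ge 1\}$ is the class of loop-full $G$ (that is, $E_1\nsqb G$) having no non-loop edge, both describable by forbidding the relevant one- and two-element constants. Next I record the crucial cross-type equinumerosity: specialising Lemma \ref{vxjfhdsuhiz} to the loop-full, edge-free graph $G=L_n$ yields the definable triple $(L_n,E_n,L_n\to E_n)$, where $L_n\to E_n$ consists of the $n$ loop vertices, $n$ fresh loop-free vertices, the retained loops, and a perfect matching $v_i\to v'_i$ with no further edges. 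Since $l$ is a bijection, $L_n$ is the unique loop-full graph with $l(L_n)=E_n$, so the relation ``$Y=L_n$ and $X=E_n$'' is definable; in particular the map $E_n\mapsto L_n$, an equinumerosity between a loop-free and a loop-full empty graph, becomes available.

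With these in hand the construction is as follows. From $E_n$ pass to $L_n$, then form $W:=L_n\dcup E_m$ using Lemma \ref{fkuerz8w9}; this $W$ carries $n$ loop vertices, $m$ loop-free vertices and no non-loop edge, hence has exactly $n+m$ vertices. It then remains to define $E_{n+m}$ as the unique empty graph equinumerous with $W$. For this I would build a matching-certificate digraph $M$ whose loop-full/loop-free structure encodes a copy of $W$ on one side, a loop-free set $Z$ on the other, and a perfect matching of directed edges from the $W$-side onto $Z$ with no other cross edges; the existence of such an $M$ forces $|V(Z)|=|V(W)|=n+m$, and requiring $Z$ to be empty pins down $Z=E_{n+m}$. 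Exactly as in Lemma \ref{vxjfhdsuhiz}, $M$ is to be characterised purely by forbidding an explicit finite list of at most four-vertex substructures, in the spirit of Fig.\ \ref{xcljsdisro}, that would break either the bijectivity of the matching or the absence of spurious edges.

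The main obstacle is precisely this last step. In Lemma \ref{vxjfhdsuhiz} the source $G$ was loop-full, so its vertices coincided with the loop-full part of the ambient digraph and were automatically separated from the loop-free image; here $W$ is mixed, so its $m$ loop-free vertices sit in the same loop-free part of $M$ as the whole of $Z$, and the forbidden-pattern analysis must additionally separate the ``source'' loop-free vertices of $W$ from the ``target'' vertices of $Z$ (using, e.g., the orientation of the matching edges and the resulting in/out-degree patterns) and rule out matchings that fail to be bijections. Verifying that a finite set of excluded substructures on at most four vertices suffices to enforce all of this at once --- equinumerosity, no collapsing of the two sides, and no extra edges --- is the technical heart of the argument; once it is in place, the displayed relation is exactly the set of triples $(X,Y,Z)$ of empty graphs admitting the data $(L,W,M)$ above, which is first-order.
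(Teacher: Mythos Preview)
Your setup is correct and shares the paper's core idea: pass from $E_m$ to $L_m$ via Lemma~\ref{vxjfhdsuhiz}, then use Lemma~\ref{fkuerz8w9} to glue a loop-full piece to a loop-free piece. Where you diverge is in the final extraction of $E_{n+m}$, and the obstacle you flag is real and self-inflicted. You try to count the vertices of the \emph{mixed} graph $W=L_n\dcup E_m$ by building a brand-new matching certificate $M$ from $W$ onto an empty target $Z$. Because $W$ already contains $m$ loop-free vertices, the loop-free part of $M$ mixes the source side with the target side, and the would-be matching edges between loop-free $W$-vertices and $Z$-vertices sit entirely inside that loop-free part. This destroys the clean ``loop-full side / loop-free side'' separation that made Lemma~\ref{vxjfhdsuhiz} work, and you do not actually produce the finite list of forbidden substructures that would repair it; you only assert that such a list should exist. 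That is the gap.

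The paper sidesteps the problem entirely by never trying to count a mixed graph. Instead of matching $W$ to $Z$, it reuses the matching $L_m\to E_m$ that Lemma~\ref{vxjfhdsuhiz} already supplies and simply adjoins $n$ further isolated loop-free vertices, characterising the single digraph $X=E_n\dcup(L_m\to E_m)$ by: loop-full part $L_m$; $L_m\to E_m\sqsubseteq X$; $E_n\dcup L_m\sqsubseteq X$ but $E_{n+1}\dcup L_m\nsqb X$; the only two-vertex substructure with a non-loop edge is $L_\to$; no loop vertex points to two distinct loop-free vertices (the second pattern of Fig.~\ref{xcljsdisro}); and the loop-free part is some $E_i$. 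Every non-loop edge of $X$ then goes from a loop vertex to a loop-free vertex, so the loop-free part of $X$ is automatically edgeless with $m$ matched plus $n$ isolated vertices, i.e.\ $E_{n+m}$. No new gadget is needed, and the obstacle you identified never arises.
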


\begin{proof} The set $\{E_n\}$ is definable as it consists of $E_1$ plus those digraphs which have only $E_2$ as a two-element substructure.
$E_n \dcup (L_m \to E_m)$ is the digraph $X$ for which
\begin{itemize}
\item $E_n \dcup L_m \sqsubseteq X$ (using Lemmas \ref{fkuerz8w9} and \ref{vxjfhdsuhiz}),
\item $L_m \to E_m \sqsubseteq X$ (using Lemma \ref{vxjfhdsuhiz}),
\item the second digraph of Fig. \ref{xcljsdisro}, without the dashed edges, is not a substructure,
\item $E_{n+1} \dcup L_m \nsqb X$ ($E_{n+1}$ is just the cover of $E_n$ in $\{E_n\}$),
\item on two vertices, the only substructure having a non-loop edge is $L_{\to}$,
\item the maximal loop-full substructure of $X$ is $L_m$, and
\item the maximal loop-free substructure of $X$ is of the form $E_i$.
\end{itemize}
The $E_i$ of the last condition is $E_{n+m}$.
 \end{proof}

\begin{nlemma}\label{jbnhbcez} The following relation is definable:
\begin{equation}\label{20893}
\{(G,F): \text{$G$ and $F$ have the same number of vertices}\}.
\end{equation}
\end{nlemma}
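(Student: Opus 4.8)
We must define the relation that $G$ and $F$ have the same number of vertices, i.e. $|V(G)| = |V(F)|$, using the first-order language of $(\mathcal{D}; \sqsubseteq)$ together with whatever constants and auxiliary relations the earlier lemmas provide.

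**Key tools available.**
- Addition of empty graphs: $\{(E_n, E_m, E_{n+m})\}$ is definable (Lemma cxnsdf9ds).
- The set $\{E_n\}$ is definable.
- $|V(G)| = n$ means $G$ is on level $n$ of $(\mathcal{D}; \sqsubseteq)$.

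**Strategy.** The number of vertices of $G$ equals the size of the largest "empty" or any spanning substructure... Actually, the cleanest characterization: $|V(G)| = n$ iff $E_n \sqsubseteq G$ and $E_{n+1} \not\sqsubseteq G$. Because $E_n$ (the edgeless graph on $n$ vertices) embeds as a substructure into $G$ iff $G$ has at least $n$ vertices (take any $n$ vertices and the induced substructure on them — but wait, the induced substructure on $n$ vertices need not be $E_n$).

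Let me reconsider. $E_n$ is a *substructure* of $G$ iff there exist $n$ vertices in $G$ that induce the empty graph, i.e., $G$ has an independent set (no edges among them, including no loops) of size $n$. This is NOT the same as $|V(G)| \geq n$.

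**Correct approach.** The right invariant: $E_n \sqsubseteq G$ for the *largest* such $n$ gives the independence number, not the vertex count. So I need a different encoding.

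The key: $L_n$ (all loops) is a substructure of $G$ iff $G$ has $n$ vertices all carrying loops. Combined with the loop-exchange automorphism $l$, and the fact that for ANY graph $G$, the number of vertices is intrinsic.

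**Cleaner idea using $\dot\cup$ and $E_n$.** Note $|V(G\,\dot\cup\, E_1)| = |V(G)| + 1$. And disjoint union with $E_n$ is available via Lemma fkuerz8w9 (in the loop-free/loop-full split) — but that requires loop structure.

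**Most likely intended proof.** Observe $|V(G)| = |V(F)|$ iff $G$ and $F$ lie on the same level. Using the $E_n$ ladder: $G$ and $F$ have the same number of vertices iff for the graph $E_{|V(G)|}$, we have $E_{|V(G)|} \sqsubseteq G$ maximally. The vertex count $n = |V(G)|$ is characterized by: $E_n \sqsubseteq F_n$-type saturation...

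Actually the simplest correct statement: $|V(G)| = n$ iff $G \sqsubseteq F_n$ and $G \not\sqsubseteq F_{n-1}$, where $F_n$ is the full graph (every vertex has a loop and all edges). Since $G \sqsubseteq F_n$ iff $|V(G)| \leq n$ (any graph on $\leq n$ vertices is an induced substructure of the complete-with-loops graph), this pins down $|V(G)|$ exactly. Hence:

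$$(G, F) \text{ have same vertex count} \iff \forall n\, (G \sqsubseteq F_n \leftrightarrow F \sqsubseteq F_n).$$

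This is first-order definable once $\{F_n\}$ is definable. (The set $\{F_n\}$ is definable as the $l$-image or $c$-image of $\{E_n\}$, or directly as graphs whose only 1-element substructure is $L_1$ and only 2-element substructure is $F_2$.)

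So my proof:

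$$\{(G,F) : \forall Z\,(Z \in \{F_n\} \to (G \sqsubseteq Z \leftrightarrow F \sqsubseteq Z))\}.$$

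**Main obstacle I'd flag:** Proving $\{F_n\}$ is definable and verifying $G \sqsubseteq F_n \iff |V(G)| \leq n$ (the latter holds since $F_n$ contains every possible edge and loop, so any $G$ with $\leq n$ vertices embeds as the induced substructure on a matching vertex subset — but we must check $G$'s induced structure agrees, which it does since $F_n$ is "full," hence every induced substructure on $k$ vertices is $F_k$, NOT arbitrary $G$).

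So $G \sqsubseteq F_n$ actually means $G = F_k$ for some $k \le n$! That's wrong again.

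**Final correct idea:** Use $E_n \,\dot\cup\, G$. We have $|V(E_n \,\dot\cup\, G)| = n + |V(G)|$, and disjoint-union-with-$E_n$ should be definable. Then "same number of vertices" reduces to comparing via the additive structure. I expect the actual proof to route through Lemma fkuerz8w9 / cxnsdf9ds to realize disjoint union with empty graphs and then match levels arithmetically — and the obstacle is correctly handling the mixed loop structure of arbitrary $G, F$.
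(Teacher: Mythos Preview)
Your exploration correctly rules out the naive approaches: $E_n \sqsubseteq G$ measures only the size of a largest loop-free independent set, and $G \sqsubseteq F_n$ holds only when $G$ itself is some $F_k$. But the proposal never arrives at a working argument. The ``final correct idea'' asserts that disjoint-union-with-$E_n$ should be definable for arbitrary $G$, yet Lemma~\ref{fkuerz8w9} only gives you $G \dcup F$ when one side is loop-full and the other loop-free; for a general $G$ with mixed loops you have no handle on $G \dcup E_n$ without already knowing how to count vertices. So the obstacle you flag at the end --- handling the mixed loop structure --- is exactly the missing step, and you have not supplied it.

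The paper resolves this as follows. Split $G$ into its loop-full part $G_1$ and its loop-free part $G_2$. To count $|V(G_1)|$, build an auxiliary digraph $X$ whose loop-full part is $G_1$, whose loop-free part is some $E_i$, and in which the only two-vertex substructures with exactly one loop and a non-loop edge are copies of $L_{\to}$; forbidding the small configurations of Fig.~\ref{xcljsdisro} forces these $L_{\to}$ edges to form a bijection between $V(G_1)$ and $V(E_i)$, so $i = |V(G_1)|$. (This is essentially the construction behind Lemma~\ref{vxjfhdsuhiz}, used here with the loop-free side relaxed to $E_i$ rather than $l(G_1)$.) The loop-free part $G_2$ is handled dually via the automorphism $l$, yielding $L_j$ with $j = |V(G_2)|$; Lemma~\ref{vxjfhdsuhiz} converts $L_j$ to $E_j$, and Lemma~\ref{cxnsdf9ds} gives $E_{i+j}$. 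Two digraphs then have the same number of vertices iff this procedure returns the same $E_{i+j}$. The ingredient you are missing is the bijection-via-$L_{\to}$ trick that lets you read off the size of each part as an $E_i$.
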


\begin{proof} We ``determine'' the number of vertices for the loop-full and the loop-free parts of the graphs separately and add them using Lemma \ref{cxnsdf9ds}.  Let $G_1$ denote the loop-full part of $G$, and $G_2$ denote the loop-free part. Let $X$ denote the digraph with the following properties:
\begin{itemize}
\item The loop-full part of $X$ is $G_1$, and the loop-free part is $E_i$ for some $i$.
\item On two points, the only substructure having exactly one loop and at least one non-loop edge is $L_{\to}$.
\item $G_1 \dcup E_1 \nsqb X$, and $E_i \dcup L_1 \nsqb X$.
\item Just as in the proof of Lemma \ref{vxjfhdsuhiz}, no digraph of the 6 digraphs of the first two pictures of Fig. \ref{xcljsdisro} is a substructure. (No matter, we wouldn't even need all 6 in this case.)
\end{itemize}

Observe that in $X$, the edges $L_{\to}$ constitute a bijection between $G_1$ and $E_i$, consequently $i$ in the first condition is $|V(G_1)|$. 

Now we proceed analogously for  the loop-free part, $G_2$. 
We do not write all the conditions down again, as they are just the ones above converted with the automorphism $l$. 
This way, we get $L_j$ with $j=|V(G_2)|$. 
We already have $E_i$ and $L_j$ defined, such that $i+j=|V(G)|$. 
To conclude, we use the relation of Lemma \ref{vxjfhdsuhiz} to get $E_j$ and Lemma \ref{cxnsdf9ds} to obtain the desired $E_{i+j}$, marking the number of vertices of $G$. 

Finally, $(G,F)\in \eqref{20893}$ holds if and only if, by doing the same, we get the same $E_{i'+j'}$ marking the number of vertices.
 \end{proof}

We define some more arithmetic in the following lemma, namely multiplication.

\begin{nlemma}\label{gvtspjhj} The following relation is definable:
$$\{(E_n,E_m, E_{nm}) : n,m\geq 1\}.$$
\end{nlemma}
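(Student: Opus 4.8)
The plan is to realise $nm$ as the number of \emph{leaves} of a disjoint union of $n$ identical directed stars, and then to read that number off via a loop-free part. For $p\ge 0$ let $S_p$ denote the \emph{star} on $p$ leaves: the weakly connected digraph whose loop-full part is $L_1$ (a single loop-carrying \emph{center}), whose loop-free part is $E_p$ (the $p$ \emph{leaves}, pairwise non-adjacent), and all of whose non-loop edges run from the center to a leaf, so that on two points the only substructure carrying exactly one loop together with a non-loop edge is $L_{\to}$ (here $S_0=L_1$). Exactly as in Lemmas \ref{fkuerz8w9} and \ref{vxjfhdsuhiz}, $S_p$ is definable from $E_p$; moreover $E_{p+1}$, and hence $S_{p+1}$, is definable from $E_p$ by Lemma \ref{cxnsdf9ds}. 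The key observation is that the loop-free part of the disjoint union of $n$ copies of $S_m$ is precisely $E_{nm}$, so it suffices to define the digraph $\underbrace{S_m\dcup\dots\dcup S_m}_{n\text{ copies}}$ from $E_n$ and $E_m$ and then take its loop-free part (the loop-full/loop-free part operators being definable by Lemma \ref{fkuerz8w9}).

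First I would carve out the definable class $\mathcal B$ of \emph{star bundles}, the digraphs $X$ that are disjoint unions of stars. A digraph $X$ lies in $\mathcal B$ iff: (i) on two loop-carrying vertices only $L_2$ occurs as a substructure, so the loop-full part (the centers) carries no non-loop edges; (ii) on two loop-free vertices only $E_2$ occurs, so the leaves are pairwise non-adjacent; (iii) on two points the only substructure carrying exactly one loop and a non-loop edge is $L_{\to}$, so every non-loop edge runs from a center to a leaf; and (iv) no leaf is shared by two centers and no leaf is isolated. Condition (iv) amounts to forbidding one fixed three-vertex configuration (two centers pointing to a common leaf) together with a disjoint-union condition excluding an $E_1$ summand, both of the kind already handled in Lemmas \ref{fkuerz8w9} and \ref{vxjfhdsuhiz}. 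All of these are first-order conditions in previously defined relations, so $\mathcal B$ is definable; a member of $\mathcal B$ is, up to isomorphism, determined by the multiset of leaf-numbers of its stars.

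Finally I would single out the digraph $X=\underbrace{S_m\dcup\dots\dcup S_m}_{n}$ among star bundles by three constraints: (a) the loop-full part of $X$ has the same number of vertices as $E_n$ (definable by Lemma \ref{jbnhbcez}), fixing the number of centers at $n$; (b) $S_{m+1}\nsqb X$, capping each center at $m$ leaves; and (c) $X$ is \emph{maximal} in $\sqsubseteq$ among the digraphs satisfying (a) and (b). Within star bundles having exactly $n$ centers and at most $m$ leaves per center, the substructure order has a unique top element, namely $n$ copies of $S_m$: since $S_a\sqsubseteq S_m$ for $a\le m$, this bundle dominates every other, and any bundle with some center carrying fewer than $m$ leaves can be strictly enlarged (within the class) by attaching one more leaf, hence is not maximal. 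Thus (a)--(c) define $X$ uniquely, each leaf is attached to exactly one center by (iv), so $X$ has exactly $nm$ leaves, and its loop-free part is $E_{nm}$; this yields the defining formula for the triple $(E_n,E_m,E_{nm})$. The main obstacle is precisely forcing \emph{every} center to carry exactly $m$ leaves: a direct per-center lower bound is not a forbidden finite substructure, and this is what the cap ``$S_{m+1}\nsqb X$'' together with the maximality clause (c) is designed to circumvent, turning a recalcitrant universal statement about each center into a single extremality condition in the substructure ordering.
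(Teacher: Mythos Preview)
Your argument is correct, but it follows a genuinely different construction from the paper's. The paper encodes $nm$ as the total vertex count of an \emph{equivalence-relation digraph}: it carves out the class of $X$ whose edge relation is reflexive, symmetric and transitive by forbidding three fixed small substructures ($E_1$, $l(I_2)$, and the three-vertex non-transitive pattern of Fig.~\ref{nvxcpi8ed74w}), then bounds the number of classes by requiring $L_n$ to be the largest $L_i$-substructure and the class sizes by requiring $F_m$ to be the largest $F_i$-substructure; the maximal such $X$ has exactly $nm$ vertices, and Lemma~\ref{jbnhbcez} produces $E_{nm}$. Your star-bundle construction instead realises $nm$ as the size of the loop-free part of $n$ disjoint copies of $S_m$. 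Both proofs hinge on the same trick---turning ``each block has exactly $m$'' into ``each block has at most $m$'' plus $\sqsubseteq$-maximality---so the conceptual cores match. The paper's route is slightly leaner: the equivalence class is cut out by three forbidden constants and two ``largest $L_i$/$F_i$'' conditions, whereas your class $\mathcal B$ needs the extra no-isolated-leaf clause (which, as you indicate, is expressed as $(\text{loop-full part of }X)\dcup E_1\nsqb X$, exactly the device used in Lemma~\ref{vxjfhdsuhiz}). On the other hand, your approach reads off $E_{nm}$ directly as a loop-free part rather than via the vertex-count relation.

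One phrasing point worth tightening: your condition (c) should read ``maximal in $\sqsubseteq$ among the digraphs \emph{in $\mathcal B$} satisfying (a) and (b)'', not merely ``among the digraphs satisfying (a) and (b)''. Outside $\mathcal B$, the bundle $n\cdot S_m$ is not maximal with (a) and (b) alone---for instance $n\cdot S_m\dcup E_1$ still has loop-full part $L_n$ and still avoids $S_{m+1}$---so the maximality must be taken inside the star-bundle class. Your surrounding text makes the intended reading clear, but the clause itself should say so.
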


\begin{proof} The relation $\{(E_i, F_i): i=1,2, \dots\}$ is definable as, beyond $(E_1, F_1)$, for $i>1$, $F_i$ is the only digraph having the same vertices as $E_i$ that has only $F_2$ as a two element substructure. 
Let $X$ be a digraph that is maximal with the following properties:
\begin{enumerate}
\item $E_1 \nsqb X$ to ensure that the relation $E(X)$ is reflexive.
\item $l(I_2) \nsqb X$ to ensure that the relation $E(X)$ is symmetric.
\item The digraph of Fig. \ref{nvxcpi8ed74w} is not a substructure of $X$ to ensure that the relation $E(X)$ is transitive.
\item $L_n$ is the maximal $L_i$ substructure.
\item $F_m$ is the maximal $F_i$ substructure.
\end{enumerate}

The conditions 1-3 force $E(X)$ to be an equivalence. Condition 4 tells the equivalence has at most $n$ classes and condition 5 requires the classes to have at most $m$ elements. 
It is easy to see that such an equivalence relation has a base set of at most $nm$ elements, hence $|V(X)|=nm$.
Thus, using Lemma \ref{jbnhbcez}, we are done.
 \end{proof}

\begin{figure}[h]
\begin{center}
\begin{tikzpicture}[line cap=round,line join=round,>=triangle 45,x=1.0cm,y=1.0cm]
\clip(0.5,0.5) rectangle (5.5,1.5);

\fill [color=black] (1,1) circle (1.5pt);
\draw[rotate around={0:(1,1)}] [->] (1,1) arc (360:10:5pt);
\fill [color=black] (3,1) circle (1.5pt);
\draw[rotate around={-90:(3,1)}] [->] (3,1) arc (360:10:5pt);
\fill [color=black] (5,1) circle (1.5pt);
\draw[rotate around={-180:(5,1)}] [->] (5,1) arc (360:10:5pt);

\draw [->]  (1,1) ..controls(2,1.5)..  (3,1);
\draw [->]  (3,1) ..controls(2,0.5)..  (1,1);
\draw [->]  (3,1) ..controls(4,1.5)..  (5,1);
\draw [->]  (5,1) ..controls(4,0.5)..  (3,1);
\end{tikzpicture}
\caption{}
\label{nvxcpi8ed74w}
\end{center}
\end{figure}

\begin{nlemma}\label{gvtspjhj} Disjoint union of $IO$ graphs is definable, i.e. the following relation is definable:
$$\{(G_1,G_2, G_1 \dcup G_2) : G_1, G_2 \in IO\}.$$
\end{nlemma}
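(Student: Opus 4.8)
The plan is to reduce the disjoint union to a collection of additive ``counting'' conditions that can be read off from the substructure order. For a connected $IO$-graph $C$ (that is, a line or a circle) and any $G\in IO$, set $N(C,G)$ to be the largest $j$ with $\underbrace{C\dcup\cdots\dcup C}_{j}\sqsubseteq G$, i.e. the maximal number of pairwise vertex-disjoint induced copies of $C$ inside $G$. Since $C$ is connected, every such copy lies inside a single connected component, and the components of $G_1\dcup G_2$ are exactly those of $G_1$ together with those of $G_2$; hence $N(C,\cdot)$ is additive, $N(C,G_1\dcup G_2)=N(C,G_1)+N(C,G_2)$. Conversely, the whole family $\big(N(C,G)\big)_{C}$ determines the multiset of components of an $IO$-graph: $N(O_n,G)$ is exactly the number of $O_n$-components (a circle embeds only into an equal circle), and the numbers $N(I_n,G)$ can be inverted for the line-multiplicities by downward induction on $n$, since for the longest line present $N(I_n,G)$ already equals its multiplicity. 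Combining these two facts, for $G_1,G_2,H\in IO$ one has
\begin{equation*}
H\cong G_1\dcup G_2 \iff v(H)=v(G_1)+v(G_2)\ \text{ and }\ N(C,H)=N(C,G_1)+N(C,G_2)\ \text{ for every connected } C,
\end{equation*}
where $v(\cdot)$ denotes the number of vertices (the clause on $v(\cdot)$ is in fact redundant, but it is convenient and definable).

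It remains to turn the right-hand side into a first-order formula. The conditions on $v(\cdot)$ are available from Lemmas~\ref{jbnhbcez} and~\ref{cxnsdf9ds}. The quantifier ``for every connected $C$'' ranges over the lines and circles, and both families are definable: the circles have already been handled, and the lines are definable as well, since $I_n$ is precisely the \emph{unique} lower cover of the circle $O_{n+1}$ in $(\mathcal D;\sqsubseteq)$ --- removing any vertex of $O_{n+1}$ yields $I_n$ and nothing else --- while $I_1=E_1$ is a constant. Finally, to express $N(C,H)=N(C,G_1)+N(C,G_2)$ I would produce, for a fixed connected $C$ and a given $G$, the integer $N(C,G)$ as an empty graph $E_{N(C,G)}$: taking the $\sqsubseteq$-largest isotypic power $j\cdot C$ (disjoint union of $j$ copies of $C$) that is a substructure of $G$, its vertex number is $N(C,G)\cdot v(C)$, and dividing by $v(C)$ (possible, as multiplication is already definable) recovers $N(C,G)$; the desired equality then becomes an instance of the addition relation of Lemma~\ref{cxnsdf9ds}.

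The main obstacle is the step just invoked, namely making the isotypic powers $\{\,j\cdot C : j\ge 1\,\}$ (equivalently, the $IO$-graphs all of whose components are isomorphic) available without presupposing the very disjoint union we are defining. The loop-marking device of the earlier lemmas does not help here, since both operands are loop-free and, once they are combined, no marker is left to tell the two parts apart; any attempt to erase loops from ``one side'' folds back into a disjoint union of two loop-free graphs. I would instead pin $j\cdot C$ down order-theoretically: among the $IO$-graphs whose maximal connected substructure is exactly $C$ (now expressible, the connected family being definable) and whose vertex number is a multiple of $v(C)$, the graph $j\cdot C$ is the one that is ``tiled'' by copies of $C$, which I expect to isolate by a maximality condition forcing $C$ to occur with the largest possible multiplicity for the given vertex number. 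Checking that such a condition is both expressible in the language built so far and genuinely free of circularity is the delicate point, and is where the bulk of the remaining work lies.
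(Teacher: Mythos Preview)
Your counting idea is sound at the combinatorial level: for connected $C$ the quantity $N(C,\cdot)$ is additive over disjoint unions, and the family $\big(N(C,\cdot)\big)_C$ does determine an $IO$-graph up to isomorphism. The gap is exactly where you flag it, and it is real. Your proposed ``maximality'' pin for the isotypic powers $j\cdot I_n$ does not work in either reading. If you mean $\sqsubseteq$-maximality, note that every $IO$-graph that is a disjoint union of lines of length $\le n$ with exactly $jn$ vertices sits on the same level of $(\mathcal D;\sqsubseteq)$ as $j\cdot I_n$; distinct such graphs are therefore pairwise $\sqsubseteq$-incomparable, and $j\cdot I_n$ is in no way singled out. (Concretely, $I_4\dcup I_3\dcup I_3\dcup I_2$ and $3\cdot I_4$ both have $12$ vertices, max line $I_4$, and neither is a substructure of the other.) If instead ``maximality'' means ``largest multiplicity of $C$ for the given vertex number'', that multiplicity is precisely $N(C,\cdot)$, so you are invoking the very quantity you are trying to define. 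I do not see a non-circular order-theoretic description of $\{j\cdot I_n:j\ge 1\}$ along these lines.

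More importantly, your diagnosis that ``the loop-marking device of the earlier lemmas does not help here'' is the wrong turn. It does help, and that is exactly how the paper proceeds. One does not try to erase loops; one \emph{adds} them on an auxiliary copy. Concretely: form the (definable) graph $l(G_2)\to G_2$ of Lemma~\ref{vxjfhdsuhiz}, and then build $X=G_1\dcup\big(l(G_2)\to G_2\big)$ by imposing $|V(X)|=|V(G_1)|+2|V(G_2)|$, $G_1\dcup l(G_2)\sqsubseteq X$ (available from Lemma~\ref{fkuerz8w9}, since $G_1$ is loop-free and $l(G_2)$ is loop-full), $l(G_2)\to G_2\sqsubseteq X$, and a short list of forbidden small substructures ruling out edges between the $G_1$-part and the $G_2$-part. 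The loop-free part of $X$ is then $G_1\dcup G_2$. So the loop marker is used not to separate two loop-free operands inside the answer, but to scaffold an auxiliary supergraph whose loop-free part \emph{is} the answer.
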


\begin{proof}
Though the notion $l(G)\to G$ was not defined as is in Definition \ref{060639}, it is such an analogue that the reader surely can decipher without effort.
Using $G_1$ and $G_2$, we want to define 
\begin{equation}\label{52540}
G_1 \dcup (l(G_2)\to G_2),
\end{equation}
whose loop-free part is the sought $G_1 \dcup G_2$.
For this, let $X$ satisfy the following conditions.
\begin{itemize}
\item $|V(X)|=|V(G_1)|+2|V(G_2)|$ (using Lemmas \ref{jbnhbcez} and \ref{cxnsdf9ds}),
\item $G_1 \dcup l(G_2) \sqsubseteq  X$ (using Lemma \ref{fkuerz8w9}),
\item $l(G_2)\to G_2 \sqsubseteq X$ (using Lemma \ref{vxjfhdsuhiz}), and
\item $l(F_2) \not\sqsubseteq X$.
\end{itemize}
It easy to see that these three conditions ensure that \eqref{52540} is embeddable (not substructure!) into $X$: there can be edges between the substructures $G_1$ and $G_2$ which we need to exclude. 
If there is an edge from $G_2$ to $G_1$ (in this particular direction), then the first graph of Fig.  \ref{4574234966} is a substructure, without the dashed edges. 
Analogously, if an edge goes from $G_1$ to $G_2$, then the second digraph of Fig.  \ref{4574234966} is a substructure, without the dashed edges.
Edges going both directions is forbidden by the last condition above.
Thus we need to exclude these two substructures. Let $Y$ satisfy the following conditions.
\begin{itemize}
\item $|V(Y)|=|V(G_2)|+2$, and $Y \sqsupseteq l(G_2)$.
\item $I_2$ and $L_{\to}$ are substructures of $Y$.
\item The digraph of Fig. \ref{82846} is not a substructure of $Y$.
\end{itemize}

These three conditions do not define the two digraphs of Fig.  \ref{4574234966} without the dashed edges, they rather define the set of those with the dashed edges meant as possibilities, as usual. 
However none of the dashed edges can actually appear in our $X$ so by excluding all such, we do not do more than by excluding only the two without the dashed edges. 
Finally, \eqref{52540} is the loop-free part of $X$ . 
 \end{proof}

\begin{figure}[h]
\begin{center}
\begin{tikzpicture}[line cap=round,line join=round,>=triangle 45,x=0.8cm,y=0.8cm]
\clip(-1,0) rectangle (9.2,4);

\Large

\draw plot [smooth cycle] coordinates { (0,0) (-1,2) (0,4) (1,2)};
\draw (0,2) node[anchor=center] {$l(G_2)$};
\fill [color=black] (0,3) circle (1.5pt);
\draw[rotate around={0:(0,3)}] [->] (0,3) arc (360:10:5pt);
\fill [color=black] (2,2) circle (1.5pt);
\fill [color=black] (3,1) circle (1.5pt);
\fill [color=black] (0,1) circle (1.5pt);
\draw[rotate around={0:(0,1)}] [->] (0,1) arc (360:10:5pt);
\fill [color=black] (0,.5) circle (1.5pt);
\draw[rotate around={0:(0,.5)}] [->] (0,.5) arc (360:10:5pt);

\draw [->]  (0,3) --  (2,2);
\draw [->, thick]  (2,2) --  (3,1);
\draw [->, dashed]  (3,1) ..controls(2,3)..  (0,3);
\draw [->, dashed]  (0,3) ..controls(2,3.5)..  (3,1);
\draw [->, dashed]  (3,1) --  (0,.5);
\draw [->, dashed]  (2,2) ..controls(1,1.75)..  (0,1);
\draw [->, dashed]  (0,1) ..controls(1,1.25)..  (2,2);


\draw plot [smooth cycle] coordinates { (6,0) (5,2) (6,4) (7,2)};
\draw (6,2) node[anchor=center] {$l(G_2)$};
\fill [color=black] (6,3) circle (1.5pt);
\draw[rotate around={0:(6,3)}] [->] (6,3) arc (360:10:5pt);
\fill [color=black] (8,2) circle (1.5pt);
\fill [color=black] (9,1) circle (1.5pt);
\fill [color=black] (6,1) circle (1.5pt);
\draw[rotate around={0:(6,1)}] [->] (6,1) arc (360:10:5pt);
\fill [color=black] (6,.5) circle (1.5pt);
\draw[rotate around={0:(6,.5)}] [->] (6,.5) arc (360:10:5pt);

\draw [->]  (6,3) --  (8,2);
\draw [->, thick]  (9,1) --  (8,2);
\draw [->, dashed]  (9,1) ..controls(8,3)..  (6,3);
\draw [->, dashed]  (6,3) ..controls(8,3.5)..  (9,1);
\draw [->, dashed]  (9,1) --  (6,.5);
\draw [->, dashed]  (8,2) ..controls(7,1.75)..  (6,1);
\draw [->, dashed]  (6,1) ..controls(7,1.25)..  (8,2);

\end{tikzpicture}
\caption{}
\label{4574234966}
\end{center}
\end{figure}

\begin{figure}[h]
\begin{center}
\begin{tikzpicture}[line cap=round,line join=round,>=triangle 45,x=.7cm,y=0.7cm]
\clip(0.5,0.5) rectangle (3.5,3.5);

\fill [color=black] (1,1) circle (1.5pt);
\draw[rotate around={80:(1,1)}] [->] (1,1) arc (360:10:5pt);
\fill [color=black] (1,3) circle (1.5pt);
\draw[rotate around={-80:(1,3)}] [->] (1,3) arc (360:10:5pt);

\fill [color=black] (3,1) circle (1.5pt);
\fill [color=black] (3,3) circle (1.5pt);

\draw [->]  (1,1) --  (1,3);
\draw [->]  (3,1) --  (3,3);
\draw [->]  (1,1) --  (3,1);
\draw [->]  (1,3) --  (3,3);
\end{tikzpicture}
\caption{}
\label{82846}
\end{center}
\end{figure}

\begin{nlemma}\label{42847} The following set is definable.
\begin{equation}\label{9859323333}
\{G:\text{ $G$ is a disjoint union of circles of different sizes}\}.
\end{equation}
\end{nlemma}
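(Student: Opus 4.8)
The plan is to split the defining condition into three separately definable clauses: that $G$ lies in $IO$, that every connected component of $G$ is a circle rather than a line, and that no two of these circles have equal size. The first clause is immediate since $IO$ is definable. For the second, the crucial point is that \emph{connectedness inside $IO$ is already definable}. By the lemma identifying $IO$-graphs with disjoint unions of lines and circles, a graph $H \in IO$ is connected exactly when it is a single line or a single circle, and this in turn holds precisely when $H$ admits no decomposition $H = A \dcup B$ into two nonempty $IO$-graphs. Since the disjoint-union relation on $IO$-graphs is definable (Lemma~\ref{gvtspjhj}) and nonemptiness is expressed by $E_1 \sqsubseteq A$ and $E_1 \sqsubseteq B$, the set of connected $IO$-graphs is definable; removing the already-definable circles $\{O_n : n \geq 3\}$ from it leaves exactly the lines $\{I_n : n \geq 1\}$, which is thus definable as well.

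With lines in hand I would then define ``$G$ is a disjoint union of circles'' by requiring $G \in IO$ together with the absence of a line component, i.e.\ that there exist no line $L$ and no $H \in IO$ with $G = L \dcup H$ (again via the definable disjoint-union relation). This is correct because a line, being connected, occurs in any such decomposition as a single full component, so a line component of $G$ yields such a decomposition and conversely.

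For the third clause I would forbid two disjoint isomorphic circles inside $G$: require that there be no $Y, Z \in \{O_n\}$ having the same number of vertices (Lemma~\ref{jbnhbcez}) whose disjoint union $Y \dcup Z$ satisfies $Y \dcup Z \sqsubseteq G$. The justification is that, once $G$ is known to be a disjoint union of circles, any substructure of $G$ isomorphic to $O_m$ must be one of its circle components, since no proper induced substructure of a circle is itself a circle; hence $O_m \dcup O_m \sqsubseteq G$ holds iff $G$ has two components of size $m$, and forbidding every such $Y \dcup Z$ is equivalent to the component sizes being pairwise distinct. Conjoining the three clauses, all first-order over the constants and the previously established relations, defines the set in \eqref{9859323333}.

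The step I expect to be the main obstacle, and would check most carefully, is the definability of connectedness within $IO$, on which the separation of lines from circles rests; the delicate point is to phrase ``nontrivial disjoint-union decomposition'' so that the empty graph cannot intrude, which the nonemptiness conditions $E_1 \sqsubseteq A, B$ are meant to handle. The remaining work is bookkeeping: confirming that a line component genuinely produces a decomposition $G = L \dcup H$ through the definable disjoint-union relation, and that $O_m \dcup O_m \sqsubseteq G$ detects a repeated circle size exactly under the standing assumption that $G$ is already a union of circles.
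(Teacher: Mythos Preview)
Your three-clause strategy works, but there is one small gap in Clause~2. Your condition for ``$G$ is a disjoint union of circles'' is: $G\in IO$ and there is no line $L$ and no $H\in IO$ with $G=L\dcup H$. If $G$ is itself a single line $I_n$, this condition is satisfied vacuously, since any such decomposition would force $H$ to be empty, and the empty graph is not an element of $\mathcal{D}$ (hence not of $IO$). So $I_n$ slips through. The fix is immediate---also require that $G$ itself is not a line, which you can do since you have already made the set of lines definable---but as written the clause does not quite capture what you want. Once this is patched, the remainder of your argument is sound; in particular, your justification that $O_m\dcup O_m\sqsubseteq G$ detects a repeated component size (given $G$ is a union of circles) is correct.

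Your route differs from the paper's. For ``disjoint union of circles'' the paper observes that these are exactly the $IO$-graphs with a \emph{unique upper cover in $IO$}: if every component is a circle, the only way to add a vertex and stay in $IO$ is to add an isolated one, whereas any line component can also be extended, giving a second cover. This bypasses the need to define connectedness or the set of lines. For the ``no repeated size'' part the paper defines $O_i\dcup O_i$ directly as the unions-of-circles with a unique circle substructure and twice its vertex count, rather than via the $IO$ disjoint-union relation. Your approach is more structural and reuses Lemma~\ref{gvtspjhj} heavily; the paper's is shorter and avoids that dependency, at the cost of the small combinatorial check about upper covers. Both are valid.
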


\begin{proof}
The set of digraphs that are disjoint unions of circles contains those $IO$ graphs that have unique upper-covers (in the set IO). In this set, the digraphs of the form $O_i \dcup O_i$ are those that have a unique circle substructure $O_i$ and have twice as many vertices as $O_i$. We have defined two sets of digraphs, the set of the lemma is just the set of those digraphs of the first set that have no substructures from the second.
 \end{proof}

\begin{nlemma}\label{74054} The following relation is definable.
\begin{equation}\label{59943}
\begin{split}
\{(O^*, G\dcup O^*): &\text{$G\in \mathcal{D}$  and $O^*$ is a disjoint union of $|V(G)|$-many circles of} \\
&\text{different sizes such that the  smallest has at least $|V(G)|+1$ vertices}\}.
\end{split}
\end{equation}
\end{nlemma}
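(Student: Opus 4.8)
The plan is to treat the given pair $(O^*,H)$ (writing $H$ for the candidate $G\dcup O^*$) and to carve out the relation by four kinds of conditions: an intrinsic description of $O^*$, a small amount of vertex arithmetic computing $m:=|V(G)|$, a count of the circles of $O^*$, and a recognition that $O^*$ sits inside $H$ as a union of full connected components. The workhorse throughout is the hypothesis that every circle of $O^*$ has more than $m$ vertices: a connected induced substructure lands inside a single connected component, and an induced circle inside $O_n$ can only be $O_n$ itself, so a circle of size $n>m$ embeds into $G\dcup O^*$ only as the size-matched component of $O^*$. This ``uniqueness of placement'' is what will make the whole relation first-order visible. First I would record that ``$O^*$ is a disjoint union of circles of distinct sizes'' is available from Lemma~\ref{42847}, and that $E_{|V(H)|}$ and $E_{|V(O^*)|}$ are definable via Lemma~\ref{jbnhbcez} (together with the definability of $\{E_n\}$); subtracting with the addition relation of Lemma~\ref{cxnsdf9ds} yields $E_m$ with $m=|V(H)|-|V(O^*)|$, the intended $|V(G)|$.

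To count the circles of $O^*$ I would use the following observation. A disjoint union of lines is definable as an $IO$-graph having no circle substructure (both ingredients being already definable), and the line-union substructure of $O^*=O_{n_1}\dcup\cdots\dcup O_{n_k}$ with the most vertices is obtained by deleting one vertex from each circle, hence has exactly $|V(O^*)|-k$ vertices. Therefore $k$ is recovered as $|V(O^*)|$ minus the number of vertices of the largest line-union substructure of $O^*$ (maximality being expressible through the definable vertex-count comparison $E_a\sqsubseteq E_b\Leftrightarrow a\le b$, and the subtraction through Lemma~\ref{cxnsdf9ds}). The circle-count clause of the lemma then reads $E_k=E_m$. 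The size clause is handled in the same spirit: the smallest circle of $O^*$ is the circle substructure with the fewest vertices, and one simply requires its vertex count to exceed $m$.

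It remains to express that $H$ decomposes as $G\dcup O^*$, and I expect this to be the main obstacle: since $G$ is an \emph{arbitrary} digraph, no local forbidden-substructure test may be applied to $G$ itself. The large-circle hypothesis is exactly what rescues the situation. I would require (i) $O^*\sqsubseteq H$ and (ii) $|V(H)|=|V(O^*)|+m$, and then forbid crossing edges by the clause: for every circle $C\sqsubseteq O^*$ and every $Z$ with $C\sqsubseteq Z\sqsubseteq H$ and $|V(Z)|=|V(C)|+1$, one has $Z=C\dcup E_1$ or $Z=C\dcup L_1$ (these disjoint unions being definable since $C,E_1,L_1\in IO$). By placement-uniqueness, under conditions (i) and the size clause any such $C$ can occupy only its designated component of $H$, so $Z$ is that component together with one further vertex; the two listed alternatives say precisely that this further vertex sends no edge into $C$ (with or without a loop of its own). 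Letting $C$ range over all circle substructures of $O^*$ and $Z$ over all one-vertex extensions forbids every edge between the $O^*$-part and the rest of $H$.

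Finally I would verify the equivalence in both directions. If $H=G\dcup O^*$ satisfies the numerical conditions, then (i) and (ii) hold, the circles embed only at their own components so no $C$-with-pendant occurs, and the counting and smallest-size clauses are just the ``$m$ circles'' and ``smallest exceeds $m$'' requirements; hence the pair is accepted. Conversely, if all clauses hold, then (i) places the circles at size-matched components carrying no internal or inter-circle edges (otherwise the induced copy would not be $O^*$), the crossing-edge clause removes every edge between these components and the remaining $m$ vertices, and what is left on those vertices is an arbitrary $G$ with $H=G\dcup O^*$; the counting and size clauses guarantee that $O^*$ has $m=|V(G)|$ circles of distinct sizes with the smallest having at least $|V(G)|+1$ vertices. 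This is exactly the relation~\eqref{59943}, and every clause used is first-order over $\sqsubseteq$ together with the already-established auxiliary relations.
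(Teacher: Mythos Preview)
Your argument is correct and follows essentially the same strategy as the paper: count the circles of $O^*$ by deleting one vertex per component, impose the size bound on the smallest circle, and then rule out crossing edges by examining one-vertex extensions. The paper carries out the last step with extensions of the whole $O^*$ (forbidding $Y\sqsupseteq O^*$ with $|V(Y)|=|V(O^*)|+1$ that fail to be $O^*\dcup E_1$ or $O^*\dcup L_1$, phrased via local two-vertex constraints), whereas you test each circle $C$ separately; the ``large circles'' hypothesis makes both versions equivalent, since any induced $O_n$ with $n>m$ in $H$ must already be a full connected component.

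One small slip: $L_1\notin IO$ (an $IO$-graph is loop-free), so Lemma~\ref{gvtspjhj} does not furnish $C\dcup L_1$. The fix is immediate: since $C$ is loop-free and $L_1$ is loop-full, Lemma~\ref{fkuerz8w9} gives the triple $(L_1,C,C\dcup L_1)$, and your clause ``$Z=C\dcup E_1$ or $Z=C\dcup L_1$'' is then expressible as intended.
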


\begin{proof}
First, we define a relation counting the number of circles in $O^*$, actually we formulate it without the restriction on the sizes of the circles:
\begin{equation}\label{85796}
\{(E_i, O):\text{$O$ is a disjoint union of $i$ circles}\}. 
\end{equation}
The set of $O$'s of this relation was defined in the first sentence of the proof of Lemma \ref{42847}. Let $O'$ denote such a substructure of $O$ that has no circle in it and has a maximal number of vertices with this property. 
Then $i+|V(O')|=|V(O)|$ holds for the $i$ of \eqref{85796}, thus we can conclude with the addition relation defined earlier.

Let $O^*$ be an element of the set defined in Lemma \ref{42847} and $i$ be the number of its circles. Let $X$ satisfy:
\begin{itemize}
\item $|V(X)|=|V(O^*)|+i$.
\item The smallest circle in $O^*$ has at least $i+1$ vertices.
\item $O^* \sqsubseteq X$.
\item $X$ does not have a substructure $Y$ for which
\begin{itemize}
\item $|V(Y)|=|V(O^*)|+1$, and $Y \sqsupseteq O^*$,
\item $Y$ is loop-free, and 
\item $Y$ is not an $IO$-graph.
\end{itemize}
\item $X$ does not have a substructure $Y$ for which
\begin{itemize}
\item $|V(Y)|=|V(O^*)|+1$, and $Y \sqsupseteq O^*$,
\item $Y$ has a loop in it, and
\item $Y$ has one of $L_{\to}$ or $t(L_{\to})$ or $c(L_1 \dcup E_1)$ as a substructure.
\end{itemize}
\end{itemize}

With these properties, $X$ is of the required form $G\dcup O^*$.
 \end{proof}

We use the term {\it weakly connected} in the usual sense, i. e. disregarding the direction of the edges. 
Our digraphs split into {\it weakly connected components}, naturally.
Let us use the abbreviation {\it wcc}=``weakly connected component'' and {\it wccs} for the plural.

\begin{definition} Let $O^*$ be a digraph that is a disjoint union of circles of different sizes, as usual, and let $G$ be an arbitrary digraph.
We introduce the notation $G_N^{O^*}$ for the digraph that is the disjoint union of weakly connected components of $G$ not embeddable into $O^*$.
\end{definition}

\begin{nlemma}\label{608589} The following relation is definable.
$$\{(O^*, G, G_N^{O^*}) : G\in \mathcal{D}, \text{ and $O^*$ is a disjoint union of circles of different sizes}\}$$
\end{nlemma}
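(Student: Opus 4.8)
The plan is to pin down $G_N^{O^*}$ by a single extremal property in the order. \emph{Claim:} $G_N^{O^*}$ is the largest substructure of $G$ none of whose weakly connected components embeds into $O^*$; that is, writing $\mathrm{NoEmb}(H,O^*)$ for ``no wcc of $H$ embeds into $O^*$'', the triple $(O^*,G,H)$ lies in our relation iff
$$ H\sqsubseteq G,\quad \mathrm{NoEmb}(H,O^*),\quad\text{and}\quad \forall H'\,\big[(H'\sqsubseteq G\wedge \mathrm{NoEmb}(H',O^*))\to H'\sqsubseteq H\big]. $$
The content of the claim is that this maximum exists and is unique, and uniqueness is the crux of the reduction: if $H'\sqsubseteq G$ has no wcc embeddable into $O^*$, then $H'$ cannot meet any wcc $C$ of $G$ with $C\le O^*$, because every induced substructure $D$ of such a $C$ satisfies $D\sqsubseteq C\le O^*$, hence $D\le O^*$, so any wcc of $H'$ landing inside $C$ would itself embed into $O^*$ — a contradiction. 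Thus every admissible $H'$ is a substructure of the disjoint union of the non-embeddable wccs of $G$, which is $G_N^{O^*}$ itself and is admissible; so $G_N^{O^*}$ is the unique maximum. (When every wcc of $G$ embeds into $O^*$, the maximum is the empty digraph, which I take to be the bottom of $(\mathcal D;\sqsubseteq)$.) Consequently the lemma reduces to defining the predicate $\mathrm{NoEmb}(H,O^*)$.

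To handle $\mathrm{NoEmb}$ I would first record exactly which weakly connected digraphs embed into $O^*$. Since $O^*$ is a disjoint union of circles of pairwise different sizes, any weakly connected $C$ with $C\le O^*$ must embed into a single circle $O_n$ of $O^*$; and a weakly connected subgraph of a directed cycle is either a directed line or the whole cycle. Hence $C\le O^*$ iff $C\cong I_k$ with $k\le M$, where $M$ is the size of the largest circle of $O^*$, or $C$ is one of the circles occurring in $O^*$. Both data are accessible with the machinery already built: the sets of lines and of circles are definable, the circles occurring in $O^*$ are exactly those $O_n$ with $O_n\sqsubseteq O^*$, and the threshold $M$ together with the comparison $k\le M$ are available through the vertex-counting relations (Lemmas \ref{jbnhbcez} and \ref{cxnsdf9ds}). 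I emphasize one trap that forces the use of counting rather than raw $\sqsubseteq$: the Hamiltonian line $I_M$ \emph{does} embed into $O_M$ but is \emph{not} an induced substructure of $O^*$, so the embeddable lines must be recognized by their vertex number, not by $I_k\sqsubseteq O^*$.

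The genuine difficulty, and the main obstacle, is that $\mathrm{NoEmb}(H,O^*)$ asks about the \emph{components} of $H$, whereas the language of $(\mathcal D;\sqsubseteq)$ quantifies over isomorphism types and cannot point at a single copy of a subgraph: a short line, or a copy of an $O^*$-circle, sitting inside a larger component of $H$ must \emph{not} trigger $\neg\mathrm{NoEmb}$, yet at the iso-type level it is indistinguishable from a genuine component (for instance $I_2\sqsubseteq I_3$, so the naive test ``$H$ has an extendable copy'' misfires on $H=I_3\dcup I_2$). I would overcome this with the marking technique: using the disjoint family of $|V(H)|$ pairwise-distinct oversized circles furnished by Lemma \ref{74054}, mark the vertices of $H$ by joining each to its own circle by a single edge, so that in the resulting rigidly labelled digraph each vertex — and thus each weakly connected component — becomes individually addressable by the size of its circle. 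Inside this labelled structure the component decomposition, the isolation of one component, and the test ``does this component embed into the \emph{separate} graph $O^*$'' (via the classification above) all become first-order expressible, and deleting the labelled vertices of the embeddable components, then stripping the labels, yields $G_N^{O^*}$. Assembling this scaffold, keeping the auxiliary circles strictly larger than those of $O^*$ so the two families never interfere, and carrying out the component isolation and deletion cleanly is where essentially all the technical work lies; the extremal characterization above is precisely what reduces the lemma to this labelling step.
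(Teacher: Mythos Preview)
Your extremal characterization --- $G_N^{O^*}$ as the largest $H\sqsubseteq G$ with no wcc embeddable into $O^*$ --- is right, and the paper argues via the same maximality. Where you diverge is in how to express the predicate $\mathrm{NoEmb}(H,O^*)$. You propose building a vertex-marking scaffold to isolate each component of $H$ and test it individually; the paper short-circuits this with a one-line absorption test. Using the already-available relation~\eqref{59943} of Lemma~\ref{74054}, the paper's condition is simply: for every $X\dcup O^*$ recognized by~\eqref{59943}, $H\sqsubseteq X\dcup O^*$ implies $|V(X)|\ge|V(H)|$. This works because under an induced embedding into a disjoint union, each wcc of $H$ lands inside a single wcc of the target, hence either entirely in $X$ or entirely in one circle of $O^*$; if no wcc of $H$ fits into a circle of $O^*$, all of $H$ is forced into $X$. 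The ``component versus occurrence-inside-a-larger-component'' difficulty that you correctly identified as the central obstacle simply evaporates: the test is global and never needs to point at an individual component. Your marking route is viable in principle but substantially heavier, and you would have to take care not to forward-reference Lemma~\ref{26231}, whose proof (via~\eqref{bade754}) relies on the present lemma.

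A side remark on the $I_M$ trap you flagged: since the paper's test is phrased through $\sqsubseteq$, it literally captures ``no wcc is an \emph{induced} substructure of $O^*$'' rather than ``not embeddable'', so the maximal $H$ it selects may disagree with the stated $G_N^{O^*}$ on line-components of the borderline length~$M$. For the only downstream use of this lemma --- extracting the non-$IO$ part in the proof of~\eqref{bade754} --- the discrepancy is harmless.
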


\begin{proof} The following conditions suffice.
\begin{itemize}
\item $O^* \in \text{(\ref{9859323333})}$,
\item $G_N^{O^*} \sqsubseteq G$,
\item for all $X\dcup O^*$ (using \eqref{59943}), $G_N^{O^*} \sqsubseteq X\dcup O^*$ implies $|V(X)| \geq |V(G_N^{O^*})|$, and
\item $G_N^{O^*}$ is maximal with the properties above.
\end{itemize}
The third condition forces $G_N^{O^*}$ to have only such wccs that are not substructures of $O^*$.
Note that the third condition can be encoded into a first-order formula using Lemmas \ref{cxnsdf9ds} and  \ref{jbnhbcez}.
 \end{proof}

\begin{nlemma} The following relation is definable.
\begin{equation} \label{bade754}
\{(O^*, G, G\dcup O^*): (O^*, G\dcup O^*) \in \eqref{59943}\}
\end{equation}
\end{nlemma}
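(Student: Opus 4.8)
The relation in \eqref{bade754} is the graph of a partial function: once the pair $(O^*,H)\in\eqref{59943}$ is fixed (with $H=G\dcup O^*$), the digraph $G$ is uniquely determined. Writing every digraph as the disjoint union of its weakly connected components, $\dcup$ is nothing but multiset union of components. The components of $O^*$ are $|V(G)|$ circles of pairwise different sizes, each larger than $|V(G)|$, so none of them can occur among the (at most $|V(G)|$-vertex) components of $G$; hence they appear in $H$ with exactly the multiplicities they have in $O^*$, and $G$ is forced to be the ``multiset complement'' of $O^*$ in $H$. So our only task is to write a first-order condition recognizing this $G$.

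The plan is to characterize $G$ as the unique $X$ satisfying $X\sqsubseteq H$, $|V(X)|=|V(H)|-|V(O^*)|$, and $X\dcup O^*\cong H$. The vertex-count clause is definable by Lemmas \ref{cxnsdf9ds} and \ref{jbnhbcez}, and under it $X\dcup O^*\cong H$ is the same as $X\dcup O^*\sqsubseteq H$. For any $X\neq G$ of the correct size the component-multiset argument above shows $X\dcup O^*\not\cong H$, so this triple of clauses does single out $G$ --- provided we can express the predicate $X\dcup O^*\sqsubseteq H$, i.e. provided we can define the disjoint union $X\dcup O^*$ of an \emph{arbitrary} digraph $X$ with the circle-union $O^*$. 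This is exactly the disjoint-union lemma (Lemma \ref{gvtspjhj}) with one factor no longer required to be an $IO$-graph.

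For this I would imitate the proof of Lemma \ref{gvtspjhj}: look for $U$ with $|V(U)|=|V(X)|+|V(O^*)|$, $X\sqsubseteq U$, $O^*\sqsubseteq U$, and no ``connecting'' configuration, the latter imposed by forbidding, for every circle component $C\sqsubseteq O^*$, any substructure of $U$ that is $C$ with a single further vertex joined to it by an edge. Quantifying over the components $C$ of $O^*$ keeps this first-order. Since each circle of $O^*$ is loop-free and strictly larger than $X$, it embeds into $U$ only as an intact weakly connected component, and the exclusion of pendants then peels every circle off as a $\dcup$-summand, so $U=R\dcup O^*$ with $|V(R)|=|V(X)|$.

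The remaining --- and, I expect, the hard --- point is to force $R\cong X$ rather than merely $X\sqsubseteq U$. The trouble is caused solely by the \emph{line} components of $X$: a directed path embeds into any of the large circles, so a line component of $X$ may ``borrow'' vertices inside a circle instead of landing on $R$, and then $X\sqsubseteq U$ holds with $R\not\cong X$. If $X$ has no line component the borrowing is impossible and the construction above is already correct; accordingly I would split $G$ as $G^-\dcup G_{\mathrm{lines}}$, recover the line-free part $G^-$ cleanly as $H_N^{O^*}$ via Lemma \ref{608589} (the components of $H$ not embeddable into $O^*$ are precisely the non-line components of $G$), and then isolate the line part --- the union of the line-components of $H$, which embed into a single auxiliary circle $O_N$ with $N>|V(H)|$ while the circles of $O^*$ do not. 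Combining the two halves back into $G$ is the delicate step, and I anticipate it will require a maximality clause in the spirit of Lemmas \ref{gvtspjhj} and \ref{608589} to pin the line-components down without reintroducing the borrowing ambiguity.
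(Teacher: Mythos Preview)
Your decomposition into line- and non-line components is the paper's split, and invoking Lemma~\ref{608589} for the non-line part matches the paper's Condition~3, $X_N^{O^*}=G_N^{O^*}$. The gap is exactly where you flag it: the line components are never pinned down, and the ``combining'' step is left as an unspecified maximality clause. That step is genuinely awkward in your framing, because you are trying to \emph{reconstruct} $G$ from $(O^*,H)$ and then compare; any substructure-based description of the line part of $H$ is contaminated by the long paths sitting inside the circles of $O^*$, which is precisely the borrowing problem you already diagnosed in your first attempt.

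The paper avoids the reconstruction entirely. Since $G$ is a free variable of the relation, one may impose conditions mentioning $G$ directly. The paper's Condition~4 is simply: for every $IO$-substructure $Y$ of $G$, require $Y\dcup O^*\sqsubseteq H$. Both $Y$ and $O^*$ are $IO$-graphs, so Lemma~\ref{gvtspjhj} supplies $Y\dcup O^*$; there is no need to form $X\dcup O^*$ for a general $X$. Writing $H=G'\dcup O^*$ and fixing an $IO$-component type $W$, one picks an $IO$-substructure $Y^W\sqsubseteq G$ of maximal vertex-count and, subject to that, with the maximal number of $W$-copies; then $Y^W\dcup O^*\sqsubseteq H$ forces $G'_{\mathrm{IO}}$ to carry at least as many $W$-copies as $G_{\mathrm{IO}}$, and $|V(G')|=|V(G)|$ turns these inequalities into equalities. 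Quantifying over $IO$-substructures of the given $G$, rather than trying to extract them from $H$, is what dissolves the borrowing ambiguity.
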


\begin{proof} The relation in question is the set of triples $(O^*, G, X)$ for which
\begin{enumerate}
\item\label{88643} $(O^*, X) \in \text{\eqref{59943}}$,
\item\label{265325} $|V(X)|=|V(O^*)|+|V(G)|$,
\item \label{ueiowru} $X_N^{O^*}=G_N^{O^*}$ (using Lemma \ref{608589}), and
\item\label{ydi7r} $Y \dcup O^* \sqsubseteq X$, for all IO-substructures $Y$ of $G$.
\end{enumerate}

Condition \ref{88643} entails $X=G'\dcup O^*$ for some $G'$. With this notation, what we have to prove is $G=G'$. 

Condition \ref{265325} adjusts the size of $G'$.

Before explaining Conditions \ref{ueiowru} and \ref{ydi7r}, we introduce a notation needed.
For a digraph $G$, its wccs fall into two categories, those that are IO-graphs, and those that are not.
Let $G_{\mathrm{IO}}$ and $G_{\mathrm{NIO}}$ stand for the substructures of $G$ consisting of the IO- and non-IO-wccs of $G$, respectively. 
To prove $G=G'$, it is enough to show that $G_{\mathrm{IO}}=G'_{\mathrm{IO}}$ and $G_{\mathrm{NIO}}=G'_{\mathrm{NIO}}$ both hold.

Condition \ref{ueiowru} forces $G_{\mathrm{NIO}}=G'_{\mathrm{NIO}}$, because we clearly have $X_N^{O^*} \supseteq G'_{\mathrm{NIO}}$ (observe that this is not substructureness, but containment between parts of digraphs).

Condition \ref{ydi7r} is to ensure $G_{\mathrm{IO}}=G'_{\mathrm{IO}}$, but seeing it serves its purpose is far from trivial.
Our goal is to prove that for any wcc $W$ of $G_{\mathrm{IO}}$: 
\begin{equation} \label{jhfgdgasg}
\begin{split}
&\text{the number of wccs isomorphic to $W$ in $G'_{\mathrm{IO}}$ is greater } \\
&\text{or equal to the corresponding number of $G_{\mathrm{IO}}$.}
\end{split}
\end{equation} 
If this goal is achieved, we are done as we already have $|V(G_{\mathrm{IO}})|=|V(G'_{\mathrm{IO}})|$.
Let us fix an arbitrary wcc $W$ of $G_{\mathrm{IO}}$.
We pick a particular $Y=Y^W$ from the many possible $Y$s of Condition \ref{ydi7r}. 
Firstly, let $Y^W$ have a maximal number of vertices (amongst the IO-substructures of $G$), and on top of that, let it have a maximal number of wccs isomorphic to $W$ in it. 
Note that $Y^W$ need not be uniquely determined by these properties. 
In case it is not, we pick arbitrarily from the ones meeting the requirements.
Condition \ref{ydi7r} grants $Y^W \dcup O^* \sqsubseteq X$.
Let $\varphi$ be a map from $Y^W \dcup O^*$ to $X$ that certifies this fact.
The maximality of the number of vertices of $Y^W$ gives us $G'_{\mathrm{IO}} \subseteq \mathrm{Range}(\varphi)$, and it is easy to see that even if wccs isomorphic to $W$ map into $G'_{\mathrm{NIO}}$, the second maximality property of $Y^W$ implies (\ref{jhfgdgasg}).
 \end{proof}

Some technical tools follow. 
We introduce digraphs that we denote using the symbol $\male$. 
The motivation is the shape of the digraphs, as usual. 
Note, that the same notations were used in the papers \cite{Kunos2015, KunosII} in a slightly different way.

\begin{definition} Let $V(O_n)=\{v_1, \dots, v_n\}$ and let us define two digraphs with
$$V(\male_n):=V(O_n)\cup \{u_1, u_2\}, \; E(\male_n):=E(O_n)\cup \{(v_1, u_1), (u_1, u_2)\}\text{, and} $$
$$V(\male_n^L):= V(\male_n),  \; E(\male_n^L):=E(\male_n) \cup \{(u_2, u_2)\}.$$
Now let $m$ be a different positive integer from $n$ and define $\male_m$ and $\male_m^L$ analogously with $V(\male_m)=V(\male_m^L)=\{v'_1, \dots, v'_m, u'_1, u'_2\}$. 

Now we are going to deal with pairs of the digraphs just defined, which leaves us $4=2 \times 2$ cases with respect to the presence of the loops. To avoid the tiresomeness of listing all 4 possibilities all the time, we resort to the following notation. We say, let $(\Box, \triangledown) \in \{\emptyset, L\}^2$, and for example, in the case $(\Box, \triangledown)=(L, \emptyset)$, we mean $(\male_n^{L}, \male_m)$ by $(\male_n^{\Box}, \male_m^{\triangledown})$, naturally.

Let $(\Box, \triangledown) \in \{\emptyset, L\}^2$. We introduce two more types of digraphs with 
$$V(\male_n^{\Box} \to \male_m^{\triangledown}):=V(\male_n) \cup V(\male_m), \; E(\male_n^{\Box} \to \male_m^{\triangledown}):=E(\male_n^{\Box}) \cup E(\male_m^{\triangledown}) \cup \{(u_2, u'_2)\},\text{and}$$
$$V(\male_n^{\Box} \leftrightarrow \male_m^{\triangledown}):=V(\male_n) \cup V(\male_m), \;  E(\male_n^{\Box} \leftrightarrow \male_m^{\triangledown}):= E(\male_n^{\Box} \to \male_m^{\triangledown}) \cup \{(u'_2, u_2)\}.$$
\end{definition}

\begin{nlemma}\label{25451} The following relation is definable for all $(\Box, \triangledown) \in \{\emptyset, L\}^2$.
\begin{equation}\label{}
\{(E_i, E_j, 
\male_i^{\Box},
\male_i^{\Box} \dcup \male_j^{\triangledown},
\male_i^{\Box} \to \male_j^{\triangledown}, 
\male_i^{\Box} \leftrightarrow \male_j^{\triangledown}): 
i,j >3, i \neq j\}
\end{equation}
\end{nlemma}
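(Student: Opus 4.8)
The relation to be defined packages together, for each pair $i \neq j$ with $i,j>3$, the constituent pieces $E_i$, $E_j$, the single "male" graph $\male_i^\Box$, their disjoint union $\male_i^\Box \dcup \male_j^\triangledown$, and the two "linked" versions $\male_i^\Box \to \male_j^\triangledown$ and $\male_i^\Box \leftrightarrow \male_j^\triangledown$. The plan is to build these up in stages, exploiting the fact that each $\male_n^\Box$ is a circle $O_n$ with a short two-edge "tail" attached (and possibly a loop at the tip), so that the circle-counting and circle-controlling machinery from Lemmas \ref{42847}, \ref{74054}, \ref{608589}, and the bade-relation \eqref{bade754} can be leveraged to pin down the sizes and shapes.

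**Step 1: define the single graph $\male_i^\Box$ from $E_i$.** First I would characterize $\male_n^\Box$ among all digraphs by local substructure conditions plus a size constraint $|V(\male_n^\Box)| = |V(O_n)| + 2 = n+2$ (available via Lemmas \ref{cxnsdf9ds} and \ref{jbnhbcez}). The body of $\male_n^\Box$ is the unique circle $O_n$ it contains, with exactly one vertex $v_1$ of the circle carrying an out-edge into the length-two tail $v_1 \to u_1 \to u_2$; the loop tag $\Box$ is controlled by forbidding or requiring a loop at the tail tip $u_2$ using $\varphi_1$-type one-element substructure conditions. Concretely I would say: $X$ contains exactly one circle, that circle is $O_n$ (so I would connect $n$ to the given $E_n$ via the circle-size definability of Lemma \ref{42847} and the counting relation \eqref{85796}), $X$ is weakly connected, and the non-circle part is exactly a directed path of length two hanging off a single circle vertex. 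The presence-of-loop cases are separated by the first-, second-, and third-element substructure lists, exactly in the spirit of the closing paragraphs of the introduction, where the final constant list is $\{\text{at most 4-element digraphs}\}$.

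**Step 2: assemble the disjoint union and the linked versions.** Having defined $\male_i^\Box$ and $\male_j^\triangledown$ individually (with $i \neq j$ guaranteeing the two circles have distinct sizes, so the two tails are distinguishable by the circle they attach to), I would define $\male_i^\Box \dcup \male_j^\triangledown$ by imposing $|V| = i+j+4$, requiring both $\male_i^\Box$ and $\male_j^\triangledown$ as substructures, and \emph{forbidding} any edge between the two components. The no-cross-edge condition is enforced by excluding all the small ($\leq 4$-element) substructures that would witness an edge joining a vertex of one component to a vertex of the other, a finite list of excluded induced substructures. For $\male_i^\Box \to \male_j^\triangledown$ I would add back exactly the single edge $(u_2, u_2')$ between the two tail tips: the tail tips are the unique vertices at the "far end" of each tail, identifiable as the unique vertices of the graph whose only non-loop incident edge lies along the tail (in-degree one, out-degree zero in the loopless case), so a single two-element substructure condition $I_2 \sqsubseteq$ "between the tips" together with forbidding any other cross-edge pins it down. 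The bidirectional version $\male_i^\Box \leftrightarrow \male_j^\triangledown$ replaces that $I_2$ between tips by $O_2$ (the two-cycle $E'$ of Fig.~\ref{41511}), again with all other cross-edges forbidden.

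**Main obstacle and how I would handle it.** The delicate point is identifying the tail tips $u_2, u_2'$ \emph{definably and without confusing them with circle vertices or with each other}, since the cross-edge must be attached precisely there and nowhere else. In the loopless case the tip is the unique degree-one sink of its component and the attachment vertex $u_1$ is the unique vertex adjacent to the tip; but when $\Box = L$ the loop at the tip changes its local neighborhood, so the tip-identification must be stated case-by-case in $(\Box,\triangledown)$, which is exactly why the lemma is phrased "for all $(\Box,\triangledown)$" rather than uniformly. I expect the real work to be the careful enumeration of which $\leq 4$-element induced substructures must be forbidden to rule out all unwanted cross-edges while permitting the one intended edge (or the intended two-cycle); because $i,j>3$, the two circles are large enough that no accidental small substructure can straddle a circle vertex and a tail in an ambiguous way, which is the reason for the hypothesis $i,j>3$. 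Once the cross-edge is forced to sit only between the tips, distinguishing $\to$ from $\leftrightarrow$ is the elementary matter of whether the tip-to-tip substructure is $I_2$ or $O_2$, and the whole six-tuple relation is the conjunction of the preceding clauses.
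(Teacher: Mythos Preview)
Your outline heads in the right direction but leans on two notions that are not available at this point of the paper and that you do not show how to define: ``$X$ is weakly connected'' and ``$X$ contains exactly one circle''. Lemma~\ref{42847} and relation~\eqref{85796} only count circles inside digraphs that are \emph{already known} to be disjoint unions of circles; they say nothing about circle-substructures of an arbitrary $X$, and weak connectedness is only obtained much later (Corollary~\ref{58562}) as a consequence of the very theorem whose proof this lemma feeds into. So Step~1 as written has a genuine gap.

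More importantly, your ``main obstacle'' --- identifying the tail tips $u_2,u_2'$ so as to pin the cross-edge to them --- is a self-created difficulty that the paper sidesteps entirely. The paper's device is Lemma~\ref{45207}: one bounds the number of vertices of loop-free degree $\geq 3$ (at most one for $\male_i^\Box$, at most two for the two-piece graphs) and forbids any vertex of loop-free degree $\geq 4$. Together with the size constraint, the containment of $O_i$ (resp.\ $O_i,O_j$), and the requirement that one fixed small ``template'' digraph (the first or second picture of Fig.~\ref{08496}) occur as a substructure, this forces the tail(s) and the cross-edge to sit exactly where they should, with no vertex-by-vertex identification needed. The disjoint union is then separated from $\to$ not by forbidding a list of cross-edge witnesses, but by requiring $O_i\dcup O_j\dcup E_2$ (or $\dots\dcup E_1\dcup L_1$, or $\dots\dcup L_2$, according to $(\Box,\triangledown)$) as a substructure, which forces the two tips to have no edge between them. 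Your exclusion-list approach could perhaps be made to work, but the degree-bounding lemma is both simpler and what actually does the job in the paper.
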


The proof is put in the last section for its technical nature.

The following definition introduces a construction of great importance in the remaining half of the proof.

\begin{definition} Let $G$ be a digraph on $n$ vertices with $V(G)=\{v_1, \dots, v_n\}$, and let $(O^*, G\dcup O^*) \in \eqref{59943}$ with $V(O^*)=\{u_i^j: 1\leq j \leq n, 1\leq i \leq k_j\}$ such that the $j$th  circle $O_{k_j}$ of $O^*$ consists of the vertices $\{u_i^j: 1\leq i \leq k_j\}$. 
Let $C(O^*)=\{O_{k_1}, \dots, O_{k_n}\}$ denote the set of the circles of $O^*$ and let $\alpha: C(O^*) \to V(G)$ be a bijective map. We introduce the notation $G\stackrel{\alpha}{\leftarrow} O^*$ for the digraph with
\[
\begin{split}
&V(G\stackrel{\alpha}{\leftarrow} O^*)=V(G\dcup O^*) \cup \{w_1, \dots, w_n\},\; \text{ and} \\
& E(G\stackrel{\alpha}{\leftarrow} O^*)=E(G\dcup O^*) \cup \{(u_1^j, w_j): 1 \leq j \leq n\} \cup \{(w_j, \alpha(O_{k_j})): 1 \leq j \leq n\}.
\end{split}
\]
\end{definition}

\begin{nlemma}\label{26231} The following relation is definable.
\begin{equation}\label{43959}
\{(O^*,G, G\dcup O^*,G\stackrel{\alpha}{\leftarrow} O^*): (O^*,G\dcup O^*)\in \eqref{59943},\; \alpha: C(O^*) \to V(G)\}.
\end{equation}
\end{nlemma}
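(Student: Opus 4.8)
The plan is to characterise the fourth coordinate $X = G\stackrel{\alpha}{\leftarrow} O^*$ as the (up to isomorphism, and up to the choice of a bijection $\alpha$) digraph extending $G\dcup O^*$ by $n=|V(G)|$ extra ``link'' vertices, one per circle, each joining a circle to a vertex of $G$. Here the hypotheses give $(O^*,G\dcup O^*)\in\eqref{59943}$ and $(O^*,G,G\dcup O^*)\in\eqref{bade754}$, so $O^*$ consists of $n$ circles of pairwise distinct sizes, all exceeding $n$. The key observation is that the construction is readable locally through the $\male$-family made definable in Lemma \ref{25451}. Writing $w_j$ for the link vertex attached to the $j$th circle via $u_1^j\to w_j\to\alpha(O_{k_j})$, the substructure of $X$ induced on the circle $O_{k_j}$ together with $w_j$ and its target is exactly $\male_{k_j}$ when the target carries no loop and $\male_{k_j}^{L}$ when it does; for two circles the induced substructure on the two circles, their links and their targets is $\male_{k_i}^{\Box}\dcup\male_{k_j}^{\triangledown}$, $\male_{k_i}^{\Box}\to\male_{k_j}^{\triangledown}$, or $\male_{k_i}^{\Box}\leftrightarrow\male_{k_j}^{\triangledown}$ according to the edges of $G$ between the two targets. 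Since each circle is unambiguously identified by its size, these $\male$-patterns pin down the links circle by circle.

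Building on this, I would define $X$ by the conjunction of the following substructure conditions. First, $G\dcup O^*\sqsubseteq X$ and $|V(X)|=|V(G\dcup O^*)|+n$, using the arithmetic of Lemmas \ref{cxnsdf9ds} and \ref{jbnhbcez} together with \eqref{bade754}; this fixes the number of added vertices at $n$. Second, for every circle size $k$ occurring in $O^*$ (that is, every $k$ with $O_k\sqsubseteq O^*$), exactly one of $\male_k$, $\male_k^{L}$ is a substructure of $X$; this forces each circle to carry at least one link and fixes the loop-status of its target. Third, a list of forbidden finite patterns guaranteeing that every added vertex is a clean link: a single in-edge from the distinguished vertex of a circle, a single out-edge into $G$, no loop, no link--link edge, and no edge running directly between a circle and $G$. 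Fourth, a forbidden pattern expressing that no two links share a target, i.e. that two copies of $\male$ glued along their tail-ends do not occur; this yields injectivity of the assignment circle $\mapsto$ target.

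The bijection then comes out by counting. There are $n$ circles and exactly $n$ added vertices; by the third condition each added vertex is a link, and by the second each circle carries at least one link, so each circle carries exactly one. Hence circle $\mapsto$ target is a well-defined function from the $n$ circles into the $n$ vertices of $G$, and injectivity from the fourth condition upgrades it to a bijection $\alpha$. The first and third conditions ensure that $X$ has no edges beyond those of $G\dcup O^*$ and the $n$ links, so $X\cong G\stackrel{\alpha}{\leftarrow} O^*$; conversely every $G\stackrel{\alpha}{\leftarrow} O^*$ plainly satisfies all four conditions, establishing \eqref{43959}.

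The main obstacle is that the formula may speak only about which digraphs are or are not substructures of $X$, never about individual vertices nor about a fixed copy of $G\dcup O^*$ sitting inside $X$; so the ``clean link'' and ``distinct target'' requirements of the third and fourth conditions must be realised as genuinely definable families of forbidden finite configurations, parametrised by the varying circle sizes. This is exactly where Lemma \ref{25451} together with the size and addition relations is indispensable, and where the delicate point lies: one must verify that excluding these patterns while fixing the vertex count really does force the global link structure and the bijection, rather than merely precluding them locally. I expect this verification to be routine but bookkeeping-heavy, of the kind the paper defers to its technical final section.
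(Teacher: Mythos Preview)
Your plan is sound and matches the paper's in its essentials---fix the vertex count, require $G\dcup O^*\sqsubseteq X$, and then use the $\male$-family of Lemma~\ref{25451} to pin down the links circle by circle. The paper, however, replaces your third and fourth conditions (families of forbidden patterns enforcing ``clean links'' and injectivity of the target map) by a single \emph{positive} requirement: for every pair $O_i,O_j\sqsubseteq O^*$ with $i\neq j$, at least one of $\male_i^{\Box}\dcup\male_j^{\triangledown}$, $\male_i^{\Box}\to\male_j^{\triangledown}$, $\male_i^{\Box}\leftrightarrow\male_j^{\triangledown}$ must occur as a substructure of $X$ for some $(\Box,\triangledown)\in\{\emptyset,L\}^2$. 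This pairwise condition does all the work your forbidden patterns do---if two links shared a target, or an added vertex were anything other than a clean link, the induced picture on the two circles together with the available non-circle vertices would fail to realise any of the three $\male$-pair shapes---and it sidesteps precisely the obstacle you flag at the end, since the $\male$-pairs are already handed to you as definable objects parametrised by $(E_i,E_j)$ via Lemma~\ref{25451}, so no ad hoc forbidden families need to be manufactured. Your route would also go through, but it incurs exactly the bookkeeping burden you anticipate; the paper's is shorter and stops at the four bullets without further verification.
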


\begin{proof} As we already defined \eqref{bade754}, we only need to define the digraphs $G\stackrel{\alpha}{\leftarrow} O^*$ (using $O^*$, $G$ and $G\dcup O^*$). The relation of the lemma consists of those triples $(O^*,G,G\dcup O^*,X)$ for which:
\begin{itemize}
\item $|V(X)|=|V(G\dcup O^*)|+|V(G)|$,
\item $G\dcup O^* \sqsubseteq X$,
\item $O_i \sqsubseteq O^*$ implies $\male_i \sqsubseteq X$ or $\male_i^L\sqsubseteq X$,
\item $O_i, O_j \sqsubseteq O^*$ ($i\neq j$) implies 
$\male_i^{\Box} \dcup \male_j^{\triangledown} \sqsubseteq X$, or 
$\male_i^{\Box} \to \male_j^{\triangledown} \sqsubseteq X$, or 
$\male_i^{\Box} \leftrightarrow \male_j^{\triangledown} \sqsubseteq X$ 
for some $(\Box, \triangledown) \in \{\emptyset, L\}^2$.
\end{itemize}

\noindent Unfortunately, these conditions do not ensure $X=G\stackrel{\alpha}{\leftarrow} O^*$ yet.
That is because, say, the tails of the $\male$s can still get entangled. 
Additional technical conditions have to be added to avoid this unwanted scenario.
For its technical nature, this argument is put in the last section.
 \end{proof}

In the following definition we introduce the soul of our proof: the edge-supporting construction. 
Before starting to study the long definition, it is worth to read the simplified idea of it, back at the beginning of this section.

\begin{definition}\label{71859} In this definition, we introduce the {\it edge-supporting construction}. 
Let $G$ be a digraph with
$$V(G)=\{v_1, \dots, v_n\},\text{ and }E(G)=\{e_1, \dots, e_r\}.$$
Note that $r\leq n^2$ is necessary. Let $p_1$ and $p_2$ be two maps from $E(G)$ to $\{v_1, \dots, v_n\}$ defined by the rule
$$\forall e\in E(G): e=(v_{p_1(e)}, v_{p_2(e)}).$$
Let us introduce a digraph $G_s$ with 
$$V(G_s):=V(G)\cup \{v_1^s, \dots, v_r^s\}, \text{ and }\; E(G_s):=E(G) \cup \bigcup_{i=1}^r\{(v_{p_1(e_i)}, v_i^s), (v_i^s, v_{p_2(e_i)})\}. $$
We call the edges added by the big union the {\it supporting edges}. 
Let
$$O^*=O_{l_1} \dcup O_{l_2} \dcup \dots \dcup  O_{l_n} \; \text{ such that }\; n^2+n<l_1<l_2<\dots < l_n.$$
Let $D_s$ be a set of integers with 
\begin{equation}\label{60945}
|D_s|=r(=|E(G)|),\text{ and } x\in D_s \Rightarrow x> l_n.
\end{equation}
Let $s$ be a bijective map from $D_s$, satisfying \eqref{60945}, to $\{v_1^s, \dots, v_r^s\}$.
Let
$$O^*_s:=O^* \dcup  \dot{\bigcup_{x\in D_s}}O_x \; \text{ with } \;
V(O^*_s)=\{u_i^j: j\in\{l_1, \dots, l_n\}\cup D_s, \; 1\leq i \leq j\}.$$
Let $\alpha : C(O^*)\to V(G)$ be a bijective map. We define the digraph $(G\stackrel{\alpha}{\leftarrow} O^*)_s $ by  
$$(G\stackrel{\alpha}{\leftarrow} O^*)_s :=G_s\stackrel{\beta}{\leftarrow} O^*_s,\text{ where } \beta|_{C(O^*)}:= \alpha, \; \beta|_{\{O_x: x \in D_s\}}:=\{(O_x, s(x)): x \in D_s\}, $$
and say it is an {\it edge-supporting digraph for $G$}.
\end{definition}

\begin{remark}\label{65332}
Note that the definition of the edge-supporting digraphs includes a condition for the size of the circles of $O^*$. 
That condition is very important here, and was not present in \eqref{43959}.
We need to be cautious about this later on.
\end{remark}

\begin{nlemma}\label{} The following relation is definable.
\begin{equation}\label{756238429}
\{(O^*, G, G\stackrel{\alpha}{\leftarrow} O^*, (G\stackrel{\alpha}{\leftarrow} O^*)_s):
\text{$(G\stackrel{\alpha}{\leftarrow} O^*)_s$ is an edge-supporting digraph for $G$} \}
\end{equation}
\end{nlemma}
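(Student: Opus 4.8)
The plan is to recognise $X=(G\stackrel{\alpha}{\leftarrow} O^*)_s$ as nothing but an instance $G_s\stackrel{\beta}{\leftarrow} O^*_s$ of the already-definable arrow-construction of Lemma \ref{26231}, and then to carve out exactly those instances that arise from edge-supporting by conditions phrased entirely in terms of the circle-markers of $X$. First I would restrict the domain. The triple $(O^*, G, G\stackrel{\alpha}{\leftarrow} O^*)$ is governed by \eqref{43959}, but edge-supporting additionally demands $n^2+n<l_1$ for the smallest circle $O_{l_1}$ of $O^*$, where $n=|V(G)|$ (Remark \ref{65332}). Using the arithmetic of Lemmas \ref{cxnsdf9ds} and \ref{gvtspjhj} together with the vertex-counting of Lemma \ref{jbnhbcez}, the quantity $n^2+n$ is definable, so ``the smallest circle of $O^*$ has more than $n^2+n$ vertices'' is a first-order condition. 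This size gap is the workhorse of the whole argument: on the one hand it guarantees that the smallest circle of $O^*_s$ still exceeds $|V(G_s)|=n+r\le n+n^2$, so that the hypothesis $(O^*_s, G_s\dcup O^*_s)\in\eqref{59943}$ of Lemma \ref{26231} is automatically met; on the other hand the support-markers (the circles of $O^*_s$ indexed by $D_s$) are strictly larger than every circle of $O^*$, so a vertex's marker size tells us unambiguously whether it is an original vertex or a supporting one.

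The heart of the definition is that, because the circles involved have pairwise distinct sizes, a circle $O_i$ embeds into $X$ exactly when $X$ carries a marker of size $i$, and that marker sits on a \emph{unique} vertex. Hence quantification over ``the markers of $X$'' is legitimate quantification over those circles $O_i$ with $O_i\sqsubseteq X$, the support-markers being recognised as exactly the circles with $O_i\sqsubseteq X$ but $O_i\nsqb O^*$; and ``there is a directed edge from the vertex marked $O_i$ to the vertex marked $O_j$'' is captured by $\male_i\to\male_j\sqsubseteq X$, using the gadgets furnished by Lemma \ref{25451} (all our sizes exceed $3$). With this dictionary I would impose: (i) via Lemma \ref{26231}, that $X=Z\stackrel{\beta}{\leftarrow} P$ for some $Z,P$ with $O^*$ the union of the $n$ smallest circles of $P$ and every further circle of $P$ larger than $O_{l_n}$; (ii) that for original sizes $i,j$ (those with $O_i,O_j\sqsubseteq O^*$) one has $\male_i\to\male_j\sqsubseteq X$ if and only if $\male_i\to\male_j\sqsubseteq G\stackrel{\alpha}{\leftarrow} O^*$, so the induced original part of $X$ reproduces the given $G\stackrel{\alpha}{\leftarrow} O^*$; and (iii) that the supporting vertices realise exactly the edges of $G$.

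Condition (iii) reads: for every support-marker $O_x$ there are unique original sizes $i,j$ with $\male_i\to\male_x\sqsubseteq X$ and $\male_x\to\male_j\sqsubseteq X$, the pair $(i,j)$ satisfies $\male_i\to\male_j\sqsubseteq G\stackrel{\alpha}{\leftarrow} O^*$ (so a genuine edge of $G$ is being supported), no edge joins two support-markers, and, for surjectivity, every edge of $G$ is supported by some $O_x$. The step I expect to be the main obstacle is forcing each edge of $G$ to be supported \emph{exactly once}, i.e. the injectivity half of (iii), as this is a multiplicity statement whose forbidden configuration involves two support-markers of unbounded size sitting between the same original pair. The size gap again saves the day: support-markers are recognisable and pairwise distinct in size, so I can quantify over distinct circles $O_x,O_{x'}\sqsubseteq X$ with $O_x,O_{x'}\nsqb O^*$ and simply forbid the existence of original sizes $i,j$ with $\male_i\to\male_x,\ \male_x\to\male_j,\ \male_i\to\male_{x'},\ \male_{x'}\to\male_j$ all substructures of $X$; combined with surjectivity this puts the support-vertices in bijection with $E(G)$.

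A secondary point requiring care is the faithfulness of the $\male_i\to\male_j$ edge-detector: one must verify that, thanks to the distinctness and largeness of the marker sizes, an embedding of $\male_i\to\male_j$ into $X$ can only send the two circles onto the genuine markers of sizes $i$ and $j$, whence the tail vertices are forced onto the corresponding connectors and marked vertices, so that $\male_i\to\male_j\sqsubseteq X$ witnesses a \emph{real} edge between the two marked vertices and never an accidental one routed through support-vertices or connectors. Since $O_i\nsqb O_{i'}$ for $i<i'$, a circle embeds only at the marker of its own size, which makes this check routine. Once it is in place, all the conditions above are visibly first-order over $(\mathcal{D};\sqsubseteq, C_1,\dots,C_k)$, and $X$ ranges precisely over the edge-supporting digraphs for $G$.
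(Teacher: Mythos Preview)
Your overall architecture is exactly the paper's: realise $X$ as an instance of Lemma \ref{26231}, impose the size gap $n^2+n<l_1$ so that circles of $O^*$ and circles of $D_s$ are separated, and then read off the edge structure via the $\male$-gadgets of Lemma \ref{25451}. Your discussion of the size gap and of the faithfulness of the circle markers is accurate, and you are in fact more explicit than the paper about the injectivity half of condition (iii).

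There is, however, a technical oversight that would make your conditions incorrect as written. You use ``$\male_i\to\male_j\sqsubseteq X$'' as the edge detector, but in the induced-substructure order this is \emph{not} equivalent to ``there is an edge from the $O_i$-marked vertex to the $O_j$-marked vertex''. Two things go wrong. First, if the marked vertex carries a loop, then $\male_i^{\emptyset}$ is not a substructure at all; one must use $\male_i^{L}$. Second, if both edges $i\to j$ and $j\to i$ are present, then $\male_i^{\Box}\to\male_j^{\triangledown}$ is \emph{not} a substructure of $X$ (it has the same vertex set as $\male_i^{\Box}\leftrightarrow\male_j^{\triangledown}$ but fewer edges, hence is not induced). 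In particular, your condition (iii) fails for loop edges of $G$: if $e=(v_i,v_i)$ is supported by $v_x^s$, then the supporting edges give \emph{both} $(v_i,v_x^s)$ and $(v_x^s,v_i)$, so the witnessing substructure is $\male_i^{L}\leftrightarrow\male_x$, not $\male_i\to\male_x$ and $\male_x\to\male_i$. This is precisely why the paper's proof splits into the three cases $\male_i^{\Box}\dcup\male_j^{\triangledown}$, $\male_i^{\Box}\to\male_j^{\triangledown}$, $\male_i^{\Box}\leftrightarrow\male_j^{\triangledown}$ and quantifies over $(\Box,\triangledown)\in\{\emptyset,L\}^2$ throughout; you should do the same, and then your argument goes through.
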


\begin{proof} 
The relation in question consists of those quadruples $(X_1,X_2,X_3, X_4)$ for which the highlighted conditions hold.
In some cases, there are explanations inserted between the conditions.
\begin{itemize}
\item There exists a quadruple $(X_1,X_2,Y,X_3)\in \eqref{43959}$, meaning $(X_1,X_2, X_3)$ is of the form $(O^*,G,G\stackrel{\alpha}{\leftarrow} O^*)$.
\end{itemize} 
Thus, instead of $(X_1,X_2, X_3)$, we use $(O^*,G,G\stackrel{\alpha}{\leftarrow} O^*)$ from now on in the proof.
Let $G$ have $n$ vertices. Now we are ready to shape $O^*$.
\begin{itemize}
\item $O_i \sqsubseteq O^*$ implies $i>n^2+n$.
\end{itemize} 
We turn to defining $X_4$ of the quadruple we started with.
\begin{itemize}
\item There exists a quadruple $(W_1, W_2, W_3, X_4) \in \eqref{43959}$, meaning $(W_1,X_4)$ is of the form $(O^*_s,G_s\stackrel{\beta}{\leftarrow} O^*_s)$. 
\end{itemize} 
At this point, $O^*_s$, $G_s$, and $\beta$ are just notations yet, we need additional conditions to make them be like in Definition \ref{71859}.
\begin{itemize}
\item $O^* \sqsubseteq O^*_s$
\item $O_i \sqsubseteq O^*_s$ implies $i\geq l_1$, where $l_1$ is the size of the smallest circle of $O^*$, as before.
\item $G\stackrel{\alpha}{\leftarrow} O^* \sqsubseteq X_4 (= G_s\stackrel{\beta}{\leftarrow} O^*_s)$.
\end{itemize}
The following conditions are to shape the supporting edges of our construction according to the definition.
\begin{itemize}
\item If $O_i\sqsubseteq O^*$ and $\male_i^L \sqsubseteq X_4$, then there exists $k>l_n$ for which $\male_i^L \leftrightarrow \male_k \sqsubseteq X_4$ holds.
Additionally, if $l$ is different from $i,k$, and $O_l \sqsubseteq O^*_s$, then there exists $\diamond \in \{\emptyset, L\}$ for which $\male_k \dcup \male_l^{\diamond} \sqsubseteq X_4$ holds.
\item If $O_i, O_j \sqsubseteq O^*$, $i\neq j$, and $\male_i^{\Box} \to \male_j^{\triangledown} \sqsubseteq X_4$ with $(\Box, \triangledown) \in \{\emptyset, L\}^2$, then there exists $k>l_n$ for which $\male_i^{\Box} \to \male_k \sqsubseteq X_4$ and $\male_k \to \male_j^{\triangledown} \sqsubseteq X_4$ both hold. 
Additionally, if $l$ is different from $i,j,k$, and $O_l \sqsubseteq O^*_s$, then there exists $\diamond \in \{\emptyset, L\}$ for which $\male_k \dcup \male_l^{\diamond} \sqsubseteq X_4$ holds.

\item If $O_i, O_j \sqsubseteq O^*$, $i\neq j$, and $\male_i^{\Box} \leftrightarrow \male_j^{\triangledown} \sqsubseteq X_4$ with some $(\Box, \triangledown) \in \{\emptyset, L\}^2$, then there exist two different $k_1, k_2>l_n$ for which all of
$$\male_i^{\Box} \to \male_{k_1}, \;
\male_{k_1} \to \male_j^{\triangledown}, \;
\male_j^{\triangledown}  \to \male_{k_2},\text{ and }
\male_{k_2} \to \male_i^{\Box}
$$
are substructures of $X_4$. 
Additionally, if $l$ is different from $i,j,k_i$, and $O_l \sqsubseteq O^*_s$, then there exists $\diamond \in \{\emptyset, L\}$ for which $\male_{k_i} \dcup \male_l^{\diamond} \sqsubseteq X_4$ holds for $i=1,2$.
\item If $O_k\sqsubseteq O_s^*$ and  $k>l_n$, then $k$ is one of the $k$s or $k_i$s of the previous three conditions.
\end{itemize} 
It is not hard to see that these conditions provide the structure we need.
 \end{proof}

We are finally ready to prove our main theorem. 

\begin{proof}[Proof of Theorem \ref{46031}] With \eqref{756238429}, fix a triple $(G, O^*,(G\stackrel{\alpha}{\leftarrow} O^*)_s)$, and let $n$ be the number of vertices of $G$.
We need to show that the set of digraphs embeddable into $G$ is definable. 
Let $X \sqsubseteq (G\stackrel{\alpha}{\leftarrow} O^*)_s$ and let $(G_X, O^*_X, G_X\stackrel{\gamma}{\leftarrow} O^*_X)$ be a triple consisting the second, first and third element of a 4-tuple of (\ref{756238429}) for which the following conditions hold. \\
Before listing the actual conditions being quite technical, it may be worth summarizing their goal plainly. 
What they do is link $G_X$ to both $X$ and $G$ the natural way, i. e. making sure that we have $G_X\leq G$ and we leave out the edges that got unsupported taking the substructure $X$. 

\begin{itemize}
\item $O_i \sqsubseteq O^*_X$ holds if and only if both $O_i\sqsubseteq O^*$, and $\male_i^{\Box} \sqsubseteq X$ for some $\Box \in \{\emptyset, L\}$ hold.

\item If $O_i, O_j \sqsubseteq O^*_X$, $i\neq j$, and $(\Box, \triangledown) \in \{\emptyset, L\}^2$, then

\begin{itemize}
\item $\male_i^{\Box} \dcup \male_j^{\triangledown} \sqsubseteq G_X\stackrel{\gamma}{\leftarrow} O^*_X$ holds if and only if one of the following three holds:
\begin{itemize}
\item $\male_i^{\Box} \dcup \male_j^{\triangledown} \sqsubseteq X$, or
\item $\male_i^{\Box} \to \male_j^{\triangledown} \sqsubseteq X$, but the edge is not supported in $X$, i. e.  there exists no $k>l_n$ (where $l_n$ is the size of the largest circle of $O^*$, as before) for which $\male_i^{\Box} \to \male_k \sqsubseteq X$ and $\male_k \to \male_j^{\triangledown} \sqsubseteq X$ both hold, or
\item $\male_i^{\Box} \leftrightarrow \male_j^{\triangledown} \sqsubseteq X$, but none of the two edges is supported in $X$.
\end{itemize}

\item $\male_i^{\Box} \to \male_j^{\triangledown} \sqsubseteq G_X\stackrel{\gamma}{\leftarrow} O^*_X$ holds if and only if one of the following two holds  
\begin{itemize}
\item $\male_i^{\Box} \to \male_j^{\triangledown} \sqsubseteq X$, and the edge is supported in $X$, or
\item $\male_i^{\Box} \leftrightarrow \male_j^{\triangledown} \sqsubseteq X$, but only the ``$i \to j$'' edge is supported in $X$.
\end{itemize}

\item $\male_i^{\Box} \leftrightarrow \male_j^{\triangledown} \sqsubseteq G_X\stackrel{\gamma}{\leftarrow} O^*_X$ holds if and only if $\male_i^{\Box} \leftrightarrow \male_j^{\triangledown} \sqsubseteq X$ and both edges are supported in $X$.
\end{itemize}
\end{itemize}
It is not hard to see that $G_X\leq G$ holds indeed, and all embeddable digraphs can be obtained this way.
 \end{proof}

\section{The remaining technicalities}

\begin{definition} 
The sum of the number of (both in- and out-)edges for a vertex, not counting the loops, is called the {\it loop-free degree} of the vertex. 
\end{definition}

\begin{nlemma}\label{45207} Let $0\leq p$ and $1\leq q$ be two fixed integers. We can define, with finitely many constants added to $(\mathcal{D}, \sqsubseteq)$, the set of digraphs that contain at most $p$ many vertices with loop-free degree at least $q$ each. 
\end{nlemma}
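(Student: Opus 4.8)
The plan is to show that the target set is \emph{hereditary} (closed under taking substructures), with only finitely many minimal forbidden substructures, and then to add precisely those forbidden substructures as the new constants. The whole argument is then a local, forbidden-substructure characterization, exactly in the spirit announced in the introduction.

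First I would record the monotonicity fact that drives everything. If $H \sqsubseteq G$, then, fixing an induced copy, a vertex of $H$ has loop-free degree in $H$ at most its loop-free degree in $G$: an induced substructure preserves edges and non-edges exactly, so the non-loop edges incident to a vertex of $H$ form a subset of those incident to its image in $G$. Moreover distinct vertices of $H$ map to distinct vertices of $G$. Consequently the quantity $d_q(G) = |\{v \in V(G) : \text{loop-free degree of } v \geq q\}|$ is monotone, i.e. $H \sqsubseteq G$ implies $d_q(H) \leq d_q(G)$. Hence the complementary set $\mathcal{B} = \{G : d_q(G) \geq p+1\}$ is upward closed under $\sqsubseteq$, and the set of the lemma is its (downward closed) complement.

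Next I would bound the minimal members of $\mathcal{B}$. Given $G \in \mathcal{B}$, pick $p+1$ distinct vertices $v_1, \dots, v_{p+1}$ of loop-free degree $\geq q$; for each $v_i$ choose a set $S_i$ of at most $q$ non-loop neighbours whose incident non-loop edges already account for loop-free degree $\geq q$ at $v_i$ (each neighbour contributes $1$ or $2$ to the degree, so $|S_i| \leq q$ suffices). The substructure $H$ of $G$ induced on $\{v_1, \dots, v_{p+1}\} \cup S_1 \cup \dots \cup S_{p+1}$ still has $v_1, \dots, v_{p+1}$ of loop-free degree $\geq q$, so $H \in \mathcal{B}$, while $|V(H)| \leq (p+1)(q+1)$. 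Therefore every element of $\mathcal{B}$ lies above some minimal element of $\mathcal{B}$, and all such minimal elements have at most $(p+1)(q+1)$ vertices; since $p$ and $q$ are fixed, there are only finitely many of them. Call this finite set $\{C_1, \dots, C_k\}$.

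Finally, with these $C_i$ added as constants, $G$ lies in $\mathcal{B}$ if and only if $C_i \sqsubseteq G$ for some $i$, so the set of the lemma is defined by the quantifier-free formula $\bigwedge_{i=1}^{k} C_i \nsqb x$. I expect the only point needing genuine care to be the witness-size bound, and in particular the check that the induced substructure $H$ really retains $p+1$ vertices of loop-free degree $\geq q$: this is exactly where one uses that all chosen neighbours in each $S_i$ were kept, so that no incident edge counted toward the degree of $v_i$ is lost when passing to $H$. (Note that the constants produced have at most $(p+1)(q+1)$ vertices; for the small values of $p$ and $q$ actually invoked later these fall among the at-most-four-element digraphs, consistent with the list announced in the introduction.)
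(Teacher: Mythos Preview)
Your argument is correct and is exactly the paper's approach---forbid the finitely many bounded-size ``certificates'' of the bad property---only spelled out more carefully; your bound $(p+1)(q+1)$ is even a bit safer than the paper's stated $(p+1)q$. One minor slip is your closing parenthetical: for the instance $p=2$, $q=3$ actually invoked later the certificates can have around a dozen vertices, so these particular constants do not sit among the at-most-four-element digraphs.
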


Before the easy proof, note that we can only use this lemma if we have a fixed constant, say $K=4$, for the whole paper, such that all usage of the lemma restricts to $p,q \leq K$. 
Otherwise there would be no guarantee we are using finitely many constants at all. Fortunately, $K=4$ will just do for the whole paper.

\begin{proof} Observe that the digraph $G$ has more than $p$ many vertices with at least $q$ loop-free degree each, if and only if it has an at most $(p+1)q$ element ``certificate'' substructure with the same property. Hence, by forbidding all those (finitely many) certificates, we define the set we need.
 \end{proof}

\begin{figure}[h]
\begin{center}
\begin{tikzpicture}[line cap=round,line join=round,>=triangle 45,x=0.5cm,y=0.5cm]
\clip(-0.5,-0.5) rectangle (20,2.5);
\large
\draw (5,1) node[anchor=south] {$\Box$};

\fill [color=black] (0,0) circle (1.5pt);
\fill [color=black] (0,2) circle (1.5pt);
\fill [color=black] (1,1) circle (1.5pt);
\fill [color=black] (3,1) circle (1.5pt);
\fill [color=black] (5,1) circle (1.5pt);

\draw [->]  (1,1) --  (3,1);
\draw [->]  (0,0) --  (1,1);
\draw [->]  (1,1) --  (0,2);
\draw [->]  (3,1) --  (5,1);

\draw (12,1) node[anchor=south] {$\Box$};
\draw (14,1) node[anchor=south] {$\triangledown$};

\fill [color=black] (7,0) circle (1.5pt);
\fill [color=black] (7,2) circle (1.5pt);
\fill [color=black] (8,1) circle (1.5pt);
\fill [color=black] (10,1) circle (1.5pt);
\fill [color=black] (12,1) circle (1.5pt);
\fill [color=black] (14,1) circle (1.5pt);
\fill [color=black] (16,1) circle (1.5pt);
\fill [color=black] (18,1) circle (1.5pt);
\fill [color=black] (19,0) circle (1.5pt);
\fill [color=black] (19,2) circle (1.5pt);

\draw [->]  (8,1) --  (10,1);
\draw [->]  (7,0) --  (8,1);
\draw [->]  (8,1) --  (7,2);
\draw [->]  (10,1) --  (12,1);
\draw [->]  (12,1) --  (14,1);
\draw [->]  (16,1) --  (14,1);
\draw [->]  (18,1) --  (16,1);
\draw [->]  (19,0) --  (18,1);
\draw [->]  (18,1) --  (19,2);
\end{tikzpicture}
\caption{}
\label{08496}
\end{center}
\end{figure}

\begin{proof}[Proof of Lemma \ref{25451}] Let us consider $E_i$ and $E_j$ given. We define the other components of the relation.

We start with $\male_i^{\Box}$ which is just the digraph $X$ for which
\begin{itemize}
\item $|V(X)|=i+2$.
\item $O_i \sqsubseteq X$.
\item We use Lemma \ref{45207} with $p=1$, and $q=3$, i. e. $X$ has at most one vertex with loop-free degree at least 3.
\item We use Lemma  \ref{45207} with $p=0$, and $q=4$ as well.
\item The first digraph of Fig. \ref{08496} is a substructure. The $\Box$ symbol is understood naturally, if $\Box=L$, then there is a loop there, if $\Box=\emptyset$, then there is not.
\item Depending on $\Box$,
\begin{itemize}
\item if $\Box=\emptyset$, then $O_i \dcup E_1 \sqsubseteq X$, that is the only cover of $O_i$ among the $IO$-graphs,
\item if $\Box=L$, then $O_i \dcup L_1 \sqsubseteq X$, that is definable with Lemma \ref{fkuerz8w9}.
\end{itemize}
\end{itemize}

We now start to deal with $\male_i^{\Box} \dcup \male_j^{\triangledown}$. $O_i\dcup O_j$ is the digraph with $i+j$ vertices that is a disjoint union of circles and both $O_i$ and $O_j$ are substructures. $\male_i^{\Box} \dcup \male_j^{\triangledown}$ is the digraph $X$ for which
\begin{itemize}
\item $|V(X)|=|V(\male_i^{\Box})|+|V(\male_j^{\triangledown})|$.
\item $\male_i^{\Box} \sqsubseteq X$, and $\male_j^{\triangledown} \sqsubseteq X$.
\item We use Lemma \ref{45207} with $p=2$, $q=3$ and with $p=0$, $q=4$.
\item Depending on $(\Box,\triangledown)$,
\begin{itemize}
\item if $(\Box,\triangledown)=(\emptyset, \emptyset)$, then $O_i\dcup O_j \dcup E_2 \sqsubseteq X$, which is just the digraph $Y$ for which
\begin{itemize}
\item $|V(Y)|=i+j+2$, and $O_i \dcup O_j \sqsubseteq X$,
\item $Y$ has the maximal substructure $E_k$ (among the ones with the previous property).
\end{itemize}
\item if $(\Box,\triangledown)=(L, \emptyset)$ or $(\emptyset, L)$, then $O_i\dcup O_j \dcup E_1 \dcup L_1 \sqsubseteq X$, which is just the digraph $Y$ for which
\begin{itemize}
\item $|V(Y)|=i+j+2$, and $O_i \dcup O_j \sqsubseteq X$,
\item $O_i\dcup O_j \dcup E_1$, which is the only $IO$-graph cover of $O_i\dcup O_j$, is a substructure,
\item $O_i\dcup O_j \dcup L_1$ is a substructure, and
\item on two elements, there is no substructure with both a loop and a loop-free edge.
\end{itemize}
\item if $(\Box,\triangledown)=(L, L)$ then $O_i\dcup O_j \dcup L_2 \sqsubseteq X$.
\end{itemize}
\end{itemize}

Now we turn to $\male_i^{\Box} \to \male_j^{\triangledown}$, which is just the digraph $X$ for which
\begin{itemize}
\item $|V(X)|=|V(\male_i^{\Box})|+|V(\male_j^{\triangledown})|$.
\item $\male_i^{\Box} \sqsubseteq X$, and $\male_j^{\triangledown} \sqsubseteq X$.
\item We use Lemma \ref{45207} with $p=2$, $q=3$ and with $p=0$, $q=4$.
\item The second digraph of Fig. \ref{08496} is substructure of $X$.
\end{itemize}

Finally, $\male_i^{\Box} \leftrightarrow \male_j^{\triangledown}$ is defined with the analogues of the  conditions for $\male_i^{\Box} \to \male_j^{\triangledown}$.
 \end{proof}

 \noindent {\it The end of the proof of Lemma \ref{26231}} \; To exclude the possible entanglement of the $\male$s, it is enough to forbid two types of digraphs as substructures. To introduce these two, first, we need two circles $O_i$ and $O_j$ with 
$$i\neq j,\;\;\;  V(O_i)=\{u_1, \dots, u_i\}, \text{ and } \; V(O_j)=\{v_1, \dots, v_j\}.$$  
Then we define $P_i$ and $P^{\triangledown}_{i,j}$ (where, as usual, $\triangledown\in \{\emptyset, L\}$) to be 
$$ V(P_i)=V(O_i) \cup \{u', u''\}, \;\;\; E(P_i)=E(O_i) \cup \{(u_1, u'), (u_1, u'')\},$$
and
$$V(P_{i,j})=V(O_i) \cup V(O_j) \cup \{u', v', w\}, $$
$$E(P_{i,j})=E(O_i) \cup E(O_j) \cup \{(u_1, u'), (u', w), (v_1, v'), (v', w)\},$$
with the usual, minor modification
$$V(P^{L}_{i,j})=V(P_{i,j}), \;\;\; E(P^{L}_{i,j})=E(P_{i,j})\cup \{(w,w)\}.$$

First, we define $P_i$ (using $O_i$). 
Actually, the very first, we define $P'_i$, that is just $P_i$ minus the vertex $u''$.
This is easy as $P'_i$ is the digraph one below $\male_i$ such that it has $O_i$ in it as a substructure but it does not equal $O_i \dcup E_1$. 
Now the sought $P_i=X$ can be defined by the following properties:
\begin{itemize}
\item $X$ covers $P'_i$,
\item $O_i \sqsubseteq X$,
\item $X$ has exactly the same three-element substructures as $P_i$, and
\item if $X$ covers $Y$ such that $O_i \sqsubseteq Y$, then $Y=P'_i$.
\end{itemize}  
 
Second, we define $P^{\triangledown}_{i,j}$ (using $O_i$ and $O_j$). It is the only digraph $X$ for which
\begin{itemize}
\item $|V(X)|=|V(O_i)|+|V(O_j)|+3$,
\item $O_i\dcup O_j \sqsubseteq X$,
\item $\male_i^\triangledown, \; \male_j^\triangledown \sqsubseteq X$, and
\item we use Lemma \ref{45207} with $p=2$, $q=3$.
\end{itemize}

Finally, to conclude the proof, we add two conditions to the ones already listed at the beginning of the proof:
\begin{itemize}
\item $O_i \sqsubseteq O^*$ implies $P_i \not\sqsubseteq X$,
\item $O_i, O_j \sqsubseteq O^*$ ($i\neq j$) implies $P^{\triangledown}_{i,j} \not\sqsubseteq X$ (for both $\triangledown\in \{\emptyset, L\}$).
\end{itemize}
\qed 

\medskip

 It is worth counting how big constants the usage of Lemma \ref{45207} requires. 
By its proof, it is clear that its usage with the pair $(p,q)$ requires constants of size at most $(p+1)q$. 
Looking back, we see that we used the lemma for the pairs $(0,4), (1, 4),$ and $(2,3)$. 
Hence the answer to our question is $9$, that is just $\max\{4,8,9\}$. \\

\medskip

{\bf Acknowledgements.}
The author is thankful to the anonymous reviewer who pointed out a serious flaw in the paper. 
In response, the author not just fixed the incorrect part, but also noticed an opportunity to free the paper from some inelegant and uncomfortable technicality.

\bibliographystyle{abbrv}      
\bibliography{references}

\bigskip

\noindent\rule{4cm}{0.4pt}

akunos@math.u-szeged.hu

\end{document}